\newcommand{\one}{\mathds{1}}
\definecolor{amber(sae/ece)}{rgb}{1.0, 0.49, 0.0}
\renewcommand{\hat}{\widehat}
\renewcommand{\tilde}{\widetilde}
\newcommand{\pds}{\mathsf{PDS}^*}
\newcommand{\pd}{\mathsf{PDS}}
\newcommand{\isbm}{\textup{ISBM}}
\newcommand{\N}{\mathcal{N}}
\newcommand{\spac}{\vspace{0pt}}
\newcommand{\dTV}{d_{\rm TV}}
\newcommand{\PCR}{k\textup{-PC}}
\newcommand\numberthis{\addtocounter{equation}{1}\tag{\theequation}}
\newcommand{\sref}[2]{\hyperref[#2]{#1 \ref*{#2}}}
\newcommand{\pc}{\mathsf{PC}}
\renewcommand{\P}{\mathbb{P}}
\newcommand{\var}{\mathsf{var}}
\newcommand\indep{\protect\mathpalette{\protect\independenT}{\perp}}
\def\independenT#1#2{\mathrel{\rlap{$#1#2$}\mkern2mu{#1#2}}}
\newcommand{\Bern}{{\rm Bern}}
\newcommand{\Binom}{{\rm Binom}}
\newcommand{\val}{\mathsf{val}}
\newcommand{\quadand}{ \quad \text{and}\quad }
\newcommand{\DkS}{\mathsf{DkS}}
\title[Detection-Recovery and Detection-Refutation Gaps]{Detection-Recovery and Detection-Refutation Gaps \\ via Reductions from Planted Clique}
\begin{document}

\maketitle

\begin{abstract}%
Planted Dense Subgraph (PDS) problem is a prototypical problem with a computational-statistical gap. It also exhibits an intriguing additional phenomenon: different tasks, such as detection or recovery, appear to have different computational limits. A \emph{detection-recovery gap} for PDS was substantiated in the form of a precise conjecture given by Chen and Xu (2014) (based on the parameter values for which a convexified MLE succeeds), and then shown to hold for low-degree polynomial algorithms by Schramm and Wein (2022) and for MCMC algorithms for Ben Arous et al. (2020). 

In this paper 
we demonstrate that a slight variation of the Planted Clique Hypothesis with \emph{secret leakage} (introduced in Brennan and Bresler (2020)), implies a detection-recovery gap for PDS. 
In the same vein, we also obtain a sharp lower bound for refutation, yielding a detection-refutation gap. 
Our methods build on the framework of Brennan and Bresler (2020) to construct average-case reductions mapping secret leakage Planted Clique to appropriate target problems. 
\end{abstract}

\begin{keywords}%
  Average-case Complexity, Planted Clique, Algorithmic Hardness
\end{keywords}

\section{Introduction}

The last decade has witnessed a dramatic shift in our understanding of the fundamental limits of high-dimensional statistics problems. Rather than the \emph{statistical limit} being the most relevant quantity governing the minimum amount of data or signal strength needed to solve a problem, it has emerged that for many problems of central importance there is a distinct and often much larger \emph{computational limit} at which computationally efficient algorithms begin to succeed. \cite{Berthet13} showed how a \emph{statistical-computational gap} for a binary variant of sparse PCA follows via reduction from the planted clique hardness conjecture (\sref{Conjecture}{pcconjecture}), spurring intense research activity (see, e.g., \cite{BB20} and references therein). 

In this paper we investigate how the computational complexity of different tasks, including detection, recovery, and refutation, can vary even for the same statistical model. The phenomena of interest are exemplified by the Planted Dense Subgraph (PDS) problem, defined next.

\paragraph{Planted Dense Subgraph ($\pd$).}

A sample from the distribution $\pd(n, k, p, q)$ is obtained by:
    \begin{enumerate}
        \item Sample $G\sim G(n, q)$ an Erd\H{o}s-Renyi graph with edge density $q$.
        \item Select a subset $S$ of vertices  uniformly among the ${n\choose k}$ subsets of size $k$.
        \item Re-sample edges with both endpoints in $S$ independently, including each with probability $p>q$.
    \end{enumerate}
    The \emph{detection} (or decision) problem is to decide, given a graph $G$, between the two hypotheses
    \begin{equation}
        H_0: G\sim G(n, q)\quadand H_1: G\sim \pd(n, k, p, q).
    \end{equation}
The \emph{recovery} problem is to (exactly) find the planted support $S$. (Weaker notions of recovery can be found in \sref{Section}{Defs}.)

The special case of $\pd$ where $p=1$ is known as the Planted Clique ($\pc$) problem. Let $G(n,k,p)=\pd(n,k,1,p)$. We denote by $\textsc{PC}_D(n, k, p)$ the problem of deciding between
$$H_0: G\sim G(n, p)\quadand H_1: G\sim G(n, k, p)\,.$$ Both detection and recovery have efficient (polynomial-time) algorithms whenever $k=\Omega(\sqrt{n})$ (\cite{alon1998finding}), but
a growing body of evidence (\cite{BHK16,feldman2017statistical}) suggests that these problems become hard for clique size $k=n^\beta$ with $\beta<1/2$.

\begin{conjecture}[PC Conjecture] \label{pcconjecture}
Fix constant $p \in (0, 1)$. Suppose that $\{ A_n \}$ is a sequence of randomized polynomial time algorithms $A_n : G_n \to \{0, 1\}$ and $k_n$ is a sequence of positive integers satisfying that $\limsup_{n \to \infty} \log_n k_n < \frac{1}{2}$. If $G$ is an instance of $\textsc{PC}_D(n, k, p)$, then
$$\liminf_{n \to \infty} \left( \P_{H_0}\left[A_n(G) = 1\right] + \P_{H_1}\left[A_n(G) = 0\right] \right) \ge 1.$$ 
\end{conjecture}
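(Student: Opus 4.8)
This statement is the Planted Clique Conjecture, which is a widely believed but \emph{unproven} hypothesis rather than a theorem; indeed the whole point of this paper is to \emph{assume} it (in a secret-leakage variant) and derive consequences. So a ``proof proposal'' here is really a discussion of what a proof would have to look like and why it is beyond current techniques.

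The most direct attack would be a worst-case to average-case reduction: map an instance of, say, $k$-Clique on arbitrary $n$-vertex graphs (or $3$-$\mathsf{SAT}$) to a graph that is distributed as $G(n,p)$ in the NO case and as $G(n,k,p)$ in the YES case, thereby transferring $\mathsf{NP}$-hardness to $\textsc{PC}_D(n,k,p)$. The obstacle is the one that has stymied this program generally: there are formal barriers against establishing $\mathsf{NP}$-hardness of natural distributional problems via the kinds of (non-adaptive, ``smooth'') reductions we know how to construct, and planted clique is, on average, in a strong sense in $\mathsf{NP}\cap\mathsf{coNP}$ (a clique certifies $H_1$, while spectral/volume arguments make $H_0$ typical), so a worst-case reduction would have surprising structural consequences. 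I would not expect this route to close.

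A more realistic plan — and the one the literature actually executes — is to prove the statement \emph{unconditionally but only against a restricted class of algorithms}, and then invoke the thesis that such classes capture all efficient algorithms for natural planted problems. The steps would be: (i) fix the model (low-degree polynomials, the sum-of-squares hierarchy, statistical queries, MCMC); (ii) exhibit the right hard object — typically plant a clique of size just below $\sqrt n$ and show its low-order moments match those of $G(n,p)$, or build a degree-$n^{\Omega(1)}$ pseudo-expectation via pseudo-calibration (Barak--Hopkins--Kelner--Kothari--Moitra--Steurer); (iii) bound the associated complexity measure (low-degree likelihood-ratio norm, SoS pseudo-expected clique size, SQ dimension) for $k=n^{\beta}$, $\beta<1/2$. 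This yields strong evidence, but not the conjecture \emph{as stated}, which quantifies over all randomized polynomial-time $A_n$.

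The main obstacle, therefore, is not a missing lemma but the fact that a genuine proof would be a breakthrough in complexity theory: it would separate average-case tractable from intractable problems in a way that implies $\mathsf{P}\neq\mathsf{NP}$ and would have to evade the relativization- and natural-proofs-style barriers that block all current circuit lower bounds. Accordingly, the honest ``proof'' is that there is none known; the correct move — and the one taken here, following Brennan--Bresler — is to adopt the (secret-leakage) conjecture as a hypothesis and spend the effort instead on average-case reductions from it to Planted Dense Subgraph, which is where detection--recovery and detection--refutation gaps can actually be derived.
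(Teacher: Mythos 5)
You are correct that this is a conjecture, not a theorem, and the paper treats it exactly as you describe: it is stated as an unproven hypothesis, with citations to supporting algorithm-specific lower bounds (sum-of-squares, statistical query), and the paper's actual work is to \emph{assume} a secret-leakage variant of it (Conjecture~\ref{conj: secret leak}) and build reductions from there. Your discussion of why a genuine proof is out of reach and why the literature settles for restricted-class lower bounds is accurate and consistent with the paper's framing.
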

In our work, we will use a (stronger) variation of this assumption proposed by \cite{BB20} where some structural information of the planted clique is assumed (the \emph{secret leakage}). See \sref{Conjecture}{conj: secret leak} and the associated discussion.

\subsection{Computational feasibility of $\pd$} 
\paragraph{$\pd$ Detection.} Feasibility of detection in $\pd$ is described by a \emph{phase diagram} (see Fig.~\ref{fig: summary}) indicating for each possible parameter choice whether the problem is: (1) information-theoretically impossible, (2) solvable in principle but computationally hard, or (3) solvable in polynomial time.
Complete phase diagrams were shown by reduction from $\pc_D$ in the regime $q=\Theta(1)$ by \cite{MaWu}\footnote{In the regime $q=\Theta(1)$, $\pd$ is easily seen to be computationally equivalent up to log factors in the parameter values to the Gaussian matrix model with corresponding means.},
for the sparse regime $p=cq$ for constant $c$ and $q=1/\mathrm{poly}(n)$ by \cite{HWX15}, and by \cite{BBH18} for a general regime interpolating between the two. 
Despite the similarity between $\pc_D$ and $\pd_D$, it is non-trivial to construct reductions that are tight against algorithms, since $\pd_D$ exhibits a trade-off between subgraph size and signal strength.

In all of the above parameter regimes, whenever $k=\omega(\sqrt{n})$ the optimal polynomial-time test $T_\mathrm{sum}$ simply compares the total number of edges to a threshold. A second moment calculation shows that
$$
T_\mathrm{sum}\text{ succeeds w.h.p. if}\quad \frac{k^4(p-q)^2}{n^2q(1-q)}=\omega(1)\,.
$$
By its nature, success of the sum statistic yields no information whatsoever about the location of the planted dense subgraph. What can be said about recovery?

\label{sec: recovhard}
\paragraph{$\pd$ Recovery.}
The best currently-known algorithms (such as spectral, semi-definite programming, and low-degree polynomials) for \emph{recovery} 
turn out to require a dramatically higher signal strength (\cite{CX14,HWX16}). 
The following conjecture posits that this signal strength is optimal for the recovery problem (\cite{CX14,HWX15}). 

\begin{conjecture} [PDS recovery conjecture]\label{conj: rec}
Suppose $G_n\sim \pd(n, k_n, p_n, q_n)$. If $k=\omega(\sqrt{n})$ and
$$\lim\sup \log_n \frac{k^2(p-q)^2}{nq(1-q)}<0,$$
then no polynomial algorithm $\mathcal{A}: G\to {[n] \choose k}$ can achieve exact recovery of $\pd$ asymptotically.
\end{conjecture}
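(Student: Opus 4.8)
\medskip
\noindent\emph{Proof plan.}\quad The plan is to derive \sref{Conjecture}{conj: rec} from \sref{Conjecture}{conj: secret leak} by constructing a polynomial-time average-case reduction $\Phi$ that maps an instance $G$ of the secret-leakage Planted Clique problem on $N$ vertices with planted clique $K$ of size $\kappa$ to an instance of $\pd(n,k,p,q)$, is close in total variation under both hypotheses, and is \emph{recovery-preserving}: the planted set $S$ of $\Phi(G)$ is an efficiently invertible function of $K$. Granting such a $\Phi$, the conclusion is routine. If a polynomial-time $\mathcal A\colon G\to {[n]\choose k}$ achieved exact recovery for $\pd(n,k,p,q)$ with success probability bounded away from $0$, then the composition ``run $\Phi$, apply $\mathcal A$, invert the planted-set map'' outputs $K$ with probability bounded away from $0$; testing whether the returned set is a $\kappa$-clique of $G$ then distinguishes $H_0$ from $H_1$ for the source problem with non-vanishing advantage, contradicting \sref{Conjecture}{conj: secret leak} once $(N,\kappa)$ is chosen with $\kappa\le N^{1/2-\Omega(1)}$.

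Concretely, I would assemble $\Phi$ as a composition of the standard primitives from the framework of \cite{BB20} (see also \cite{BBH18}), bounding the total-variation cost of each and combining by the triangle inequality: (i) an edgewise \emph{rejection kernel} that turns $\Bern(q_0)$ marginals outside the clique and $\Bern(1)$ marginals inside it into $\Bern(q)$ and $\Bern(p)$ marginals; being a product over potential edges, it leaves the planted vertex set untouched; (ii) a \emph{density-boosting} step, again built from local edge operations, to reach the sparse regime $q=n^{-\Omega(1)}$ when needed; and (iii) a \emph{cloning} step that replaces each source vertex by a cluster of $\ell=n/N$ fresh vertices under a uniformly random assignment of the $n$ target vertices to clusters. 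The cloning is what produces the gap, not mere cosmetics: the target regime of \sref{Conjecture}{conj: rec} requires $k=\omega(\sqrt n)$, which is unreachable from $\textsc{PC}_D$ directly because there the clique has size $\kappa=O(\sqrt N)$. After the blow-up $n=N\ell$, $k=\kappa\ell$, and choosing $\ell$ polynomially larger than $k^2/n$ puts the source at $\kappa/\sqrt N=k/\sqrt{n\ell}=n^{-\Omega(1)}$ exactly when the target satisfies $\limsup\log_n\frac{k^2(p-q)^2}{nq(1-q)}<0$. Because the cluster assignment is uniformly random, the induced planted set $S$ --- the union over $K$ of the $\kappa$ associated clusters --- is distributed as a uniformly random element of ${[n]\choose k}$ and, being a union of known clusters, determines $K$, so recovery transfers. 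The extra structural freedom of \sref{Conjecture}{conj: secret leak} over the vanilla Planted Clique hypothesis is meant to supply exactly the slack the reduction needs --- to match the target planted-set law and to keep every parameter transformation tight --- so that steps (i)--(iii) can land on a genuinely homogeneous $\pd(n,k,p,q)$ instance, or, failing that, on a structured surrogate such as $\pds$ from which the lower bound can be pushed back to $\pd$.

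The step I expect to be the main obstacle is making $\Phi$ simultaneously \emph{tight} --- landing at the threshold for \emph{every} admissible $(n,k,p,q)$ with $k=\omega(\sqrt n)$, not a polynomial factor inside it, which is precisely what separates this argument from the coarser detection-to-detection reductions --- and \emph{clean}, so that the output planted set is exactly, or within $o(1)$ in total variation of, uniform over ${[n]\choose k}$ and an off-the-shelf recovery algorithm for the standard model applies. These pull against each other, since a large cloning factor $\ell$ (forced both by tightness and by $k=\omega(\sqrt n)$) aggravates the within-cluster inhomogeneity of the planted structure, and reconciling them is where the secret-leakage distribution on $K$ must be engineered with care. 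The remaining issues are largely bookkeeping: controlling the accumulated total-variation error over the composition (each of the three steps must contribute $o(1)$ once $\ell$, $N$, $\kappa$ are optimized) and checking that the densities stay in the range where the rejection kernels have the required fidelity, with density boosting reserved for the sparse cases. Together with the unconditional fact, already noted above, that $T_\mathrm{sum}$ solves detection once $\frac{k^4(p-q)^2}{n^2q(1-q)}=\omega(1)$ --- a regime that overlaps the window covered by the reduction --- this yields the claimed detection--recovery gap.
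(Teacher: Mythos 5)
The statement you set out to prove is, in this paper, explicitly a \emph{conjecture}, and the paper does not prove it. The authors state it, note that it has been verified only for restricted algorithm classes (low-degree polynomials, MCMC), and then list its resolution as the first open problem: ``Construct `quiet' $H_0$ hypotheses without any dense subgraphs that are hard to distinguish from $\pd$ in order to resolve \sref{Conjecture}{conj: rec}.'' What the paper actually establishes is \sref{Corollary}{RECOVBOUND}, a strictly weaker statement covering only the parameter wedge $\alpha<\beta<\tfrac12+\tfrac23\alpha$; the rest of the conjectured hard region (the white triangle in Figure~\ref{fig: summary}, right panel) is left open. So a complete proof of \sref{Conjecture}{conj: rec} is not something you should expect to reconstruct from this paper.

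Beyond that mismatch, the core of your plan cannot work as written, and the paper itself explains why in the paragraph ``Recovery lower bound via reduction?''. You propose a map $\Phi$ from $k$-PC to $\pd(n,k,p,q)$ that is ``close in total variation under both hypotheses,'' with $H_0=G(n,q)$ and $H_1=\pd(n,k,p,q)$. But the interesting regime of \sref{Conjecture}{conj: rec} is exactly the detection-recovery gap: $\frac{k^4(p-q)^2}{n^2q(1-q)}=\omega(1)$, where detection is easy. A TV-faithful reduction there would send a conjecturally hard PC detection instance to an easy $\pd$ detection instance, and composing with $T_{\mathrm{sum}}$ would refute the PC conjecture --- a contradiction that shows no such $\Phi$ exists. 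The paper's resolution is to give up on landing on the standard null: it maps to $\pds$, i.e.\ $\tilde H_0=G(n,p_0)$ with the mean corrected so the sum test is dead, and then argues (\sref{Lemma}{lem: oracle to detection}) that a recovery oracle on $H_1$ would still separate $\tilde H_0$ from $H_1$. You mention $\pds$ only as a fallback; in fact it is the load-bearing idea, and without it the argument is circular.

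Two further technical problems with the proposed $\Phi$. First, vertex cloning with planted set equal to a union of clusters does \emph{not} give a planted support distributed as $\textit{Unif}_n(k)$; it yields a highly structured support (always a union of whole clusters), so you are not reducing to the model the conjecture is about, and the TV claim to the genuine $\pd$ target would fail for this reason alone. Second, cloning an edge $(u,v)$ into $\ell^2$ cloned pairs either (a) copies the bit, introducing strong correlations absent from $\pd$, or (b) resamples each cloned edge, which destroys the signal --- there is no local fix, and this is precisely why the paper instead uses \textsc{Bern-Rotations} with carefully designed near-isometric design matrices (Lemmas \ref{lem:bern-rotations}, \ref{RegularMatrix}, \ref{RegularBig}) to redistribute signal across the target adjacency matrix while controlling the spectral loss. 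Repairing your outline would require replacing both the null hypothesis and the cloning step with the paper's $\pds$ target and Bernoulli-rotation machinery, and even then you would only obtain \sref{Corollary}{RECOVBOUND}, not the full \sref{Conjecture}{conj: rec}.
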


The lower bound in this conjecture has been shown for restricted classes of algorithms: in \cite{SchrammWein} for low-degree polynomials and \cite{BWZ20} for Markov Chain Monte Carlo algorithms. 

\paragraph{Recovery lower bound via reduction?} 
Lower bounds have been shown for a wide variety of detection problems via reduction from $\pc$, and for the majority of these problems recovery is algorithmically feasible in the same parameter regime (to within a constant factor) in which detection is algorithmically feasible. Yet for problems where recovery seems strictly harder than detection, demonstrating a detection-recovery gap via reduction from $\pc$ has remained elusive. Attempts in this direction include those of \cite{CLR17} showing hardness for a matrix model with highly correlated entries (different from the independent edges in $\pds$), and  \cite{BB20} showed that the conjectured recovery lower bound follows from the $\pc$ conjecture for a \emph{semirandom} variant of $\pd$ where an adversary may ``helpfully" remove edges outside of the dense subgraph. The main question motivating our work is: 

\begin{center}\emph{Can a detection-recovery gap be shown for Planted Dense Subgraph via reduction?}
\end{center}

A first conceptual challenge is that, as shown by \cite{Alonred}, detection and recovery for $\pc$ are \emph{equivalent}. What this means is that the detection-recovery gap appearing in $\pd$ is inherent to $\pd$, and in particular, we cannot simply map from $\pc$ detection and $\pc$ recovery separately. 

In fact, our reductions will still map to detection problems (with implications to recovery). But we cannot simply map to the $\pd$ detection hypotheses $\pd_D$: Otherwise, we would be mapping a conjecturally hard instance of $\pc$ to an easy instance of $\pd_D$! Our goal in this paper is considerably more modest than to refute the planted clique conjecture, so we must find another way.

 \subsection{Contributions}
\begin{figure}[t]
    \centering
    {
\begin{tikzpicture}[scale=0.3] 
\tikzstyle{every node}=[font=\scriptsize]
\def\xmin{0}
\def\xmax{11}
\def\ymin{0}
\def\ymax{11}

\draw[->] (\xmin,\ymin) -- (\xmax,\ymin) node[right] {$\beta$};
\draw[->] (\xmin,\ymin) -- (\xmin,\ymax) node[above] {$\alpha$};

\node at (5, 0) [below] {$\frac{1}{2}$};
\node at (6.66, 0) [below] {$\frac{2}{3}$};
\node at (10, 0) [below] {$1$};
\node at (0, 0) [left] {$0$};
\node at (0, 10) [left] {$2$};
\node at (0, 3.33) [left] {$\frac{2}{3}$};
\node at (0, 5) [left] {$1$};

\filldraw[fill=cyan, draw=blue] (0,0) -- (5, 0) -- (6.66, 3.33) -- (0, 0);
\filldraw[fill=amber(sae/ece), draw=red] (5, 0) -- (6.66, 3.33) -- (10, 5) -- (5, 0);
\filldraw[fill=green!25, draw=green] (5, 0) -- (10, 5) -- (10, 0) -- (0, 0) -- (0, 0) -- (5, 0);
\filldraw[fill=red, draw=magenta] (6.66, 3.33) -- (10, 5) -- (10, 10) -- (6.66, 3.33);
\filldraw[fill=gray!25, draw=gray](0, 0) -- (6.66, 3.33) -- (10, 10) -- (0, 10) -- (0, 0);

\node at (4, 1) {HH};
\node at (8.5, 1.2) {EE};
\node at (7.1, 3.0) {\tiny EH};
\node at (8.5, 5.25) {EI};
\node at (4, 5.25) {II};
\end{tikzpicture}}\hspace{30mm}
{
\begin{tikzpicture}[scale=0.3]
\tikzstyle{every node}=[font=\scriptsize]
\def\xmin{0}
\def\xmax{11}
\def\ymin{0}
\def\ymax{11}

\draw[->] (\xmin,\ymin) -- (\xmax,\ymin) node[right] {$\beta$};
\draw[->] (\xmin,\ymin) -- (\xmin,\ymax) node[above] {$\alpha$};

\node at (5, 0) [below] {$\frac{1}{2}$};
\node at (6.66, 0) [below] {$\frac{2}{3}$};
\node at (7.5, 0) [below] {$\frac{3}{4}$};
\node at (10, 0) [below] {$1$};
\node at (0, 0) [left] {$0$};
\node at (0, 10) [left] {$2$};
\node at (0, 3.33) [left] {$\frac{2}{3}$};
\node at (0, 5) [left] {$1$};

\filldraw[fill=cyan, draw=blue] (0,0) -- (5, 0) -- (6.66, 3.33) -- (0, 0);
\filldraw[fill=amber(sae/ece), draw=red] (5, 0) -- (6.66, 3.33) -- (7.5, 3.75) -- (5, 0);
 \filldraw[fill=green!25, draw=green](5, 0) -- (10, 5) -- (10, 0) -- (0, 0) -- (0, 0) -- (5, 0);
\filldraw[fill=red, draw=magenta] (6.66, 3.33) -- (10, 5) -- (10, 10) -- (6.66, 3.33);
\filldraw[fill=gray!25, draw=gray] (0, 0) -- (6.66, 3.33) -- (10, 10) -- (0, 10) -- (0, 0);

\node at (4, 1) {HH};
\node at (8.5, 1.2) {EE};

\node at (8.5, 5.25) {EI};
\node at (4, 5.25) {II};
\end{tikzpicture}}
\vspace{-3mm}
    \caption{\footnotesize The pictures above (left: Detection vs Refutation; right: Detection vs Recovery) concerns $\pd(n, k, p, q)$ when $p, q$ are bounded away from 0 and 1, and $k\in\tilde\Theta(n^{\beta}), D_{KL}(p\|q)\in\tilde\Theta(n^{-\alpha})$, where E denotes \underline{e}asy, H (computationally) \underline{h}ard, and I (statistically) \underline{i}ntractable, hence the orange \textcolor{amber(sae/ece)}{EH} (computationally easy to detect but hard to refute/recover) is our main results. Our statistical \textcolor{red}{EI} and computational \textcolor{amber(sae/ece)}{EH}  characterization of refutation (left) in this density regime are both novel. The orange-white region in the right denotes the conjectural \textcolor{amber(sae/ece)}{EH} regime, where we close for orange and leave white open.\vspace{-5mm}}
    \label{fig: summary}
\end{figure}
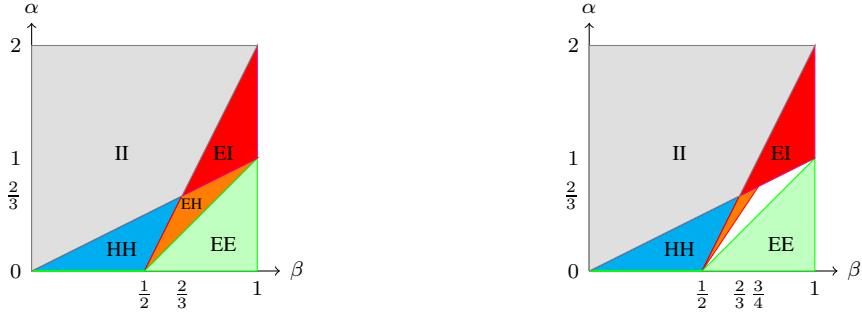
In this work, we will utilize the insight that by constructing different statistical models with similar underlying properties, tailored to corresponding inference tasks, we can go beyond the simple detection boundary to prove tighter results. Our main 
contributions are:
\begin{itemize}
    \item 
    We present the first reduction-based evidence of a computational \underline{\emph{detection-recovery gap}} (\sref{Corollary}{RECOVBOUND}) for recovering the hidden community in planted dense subgraph, via an average-case reduction from Planted Clique with secret leakage (\sref{Conjecture}{conj: secret leak}). 
    \item We show how detection hardness for the two-community Imbalanced Stochastic Block Model (ISBM), shown by reduction from \sref{Conjecture}{conj: secret leak} by \cite{BB20}, can be used to obtain a log-optimal lower bound on refuting dense $k$-subgraphs in $G(n, p)$ and Gaussian principal submatrix with large mean. This matches the algorithm-specific results of \cite{BHK16} and \cite{SoSrefute} and shows a reduction-based \underline{\emph{detection-refutation gap}}.
    \item Combining our results with existing reductions yields analogous results also for other average-case planted models such as Gaussian biclustering and biased sparse PCA. This yields detection-recovery gaps for these problems and answers a question from \cite{BBH18}. 
    \item Finally, we also give insight into the relationships between the statistical boundaries for the above problems, including showing a nearly sharp limit on refuting densest $k$-subgraphs in Erd\H{o}s R\'enyi Graphs via a novel reduction from recovery.
\end{itemize}


\subsection{Reductions and Other Evidence for Hardness}

\paragraph{Average-case Reductions.} We will define an (average-case) reduction (in total variation) from two source distributions $P_0, P_1$ to target pair $Q_0,Q_1$ as a (random) polynomial-time computable map $\Phi$ such that the pushforward $\dTV(\Phi(P_i), Q_i)=o(1)$ for $i=1,2$. The implication is that if $P_0, P_1$ are computationally hard to distinguish, then the same holds true for $Q_0$ versus $Q_1$: any poly-time algorithm $\mathcal{A}$ for the latter task would yield a poly-time algorithm $\mathcal{A} \circ \Phi$ for the former by composing with the reduction, contradicting the presumed hardness of $P_0$ versus $ P_1$. 

While reductions form the bread and butter of complexity theory, there is a general sentiment in the community that average-case reductions are notoriously delicate. Such reductions must not only map to a valid problem instance, they must precisely map entire probability distributions. 

The upside is that reductions can give the strongest possible evidence for computational hardness, and moreover, they demonstrate a connection between two formerly disparate problems which is often of interest independent of hardness. We refer to \cite{BB20} for a review of the reductions literature. 



\paragraph{Algorithm-specific hardness} There have been numerous results showing lower bounds for classes of algorithms and we mention a few of the results that relate to $\pd$. In \cite{BHK16}, a lower bound for refutation of large cliques 
in $G(n, \frac{1}{2})$ was shown for the Sum-of-Squares hierarchy. \cite{SchrammWein, 1v2Hard} sharply characterize the power of low-degree polynomials for recovery in $\pd$. 
The overlap gap framework, introduced in \cite{gamarnik2014limits}, connects algorithmic infeasibility with properties of the solution space geometry (see also \cite{OGP,gamarnik2019landscape}). Other relevant results include that of \cite{feldman2017statistical} on the statistical query model analyzed in the case of a bipartite ``samples" version of $\pc$ and \cite{brennan2021statistical} relating the power of low-degree polynomials and the statistical query model.




\subsection{Inference tasks beyond decision}
\label{Defs}
 Denote by $H_0$ the null hypothesis (usually an Erd\H{o}s-R\'enyi Graph), and $H_1$ is a graph with a planted structure with support $v\in\{0,1\}^n$. Consider a valuation function $\val$ on graphs such that: $\P_{H_0}(\val(G)<\delta-\epsilon)$ and $\P_{H_1}(\val(G)>\delta+\epsilon)$ are both $1-o_n(1)$. In the case of PDS, $\val$ is the densest-$k$-subgraph density. Consider the following:

\paragraph{Refutation} A refutation algorithm with success probability $p$ is a (randomized) algorithm $\mathcal{A}$ supported on all graphs of size $n$:
\begin{itemize}
    \item If $\val(G)>\delta+\epsilon$, then $\mathcal{A}(G)=1$.
    \item For $G\sim \P_{H_0}(\,\cdot\,|{\val(G)<\delta-\epsilon})$, output $\mathcal{A}(G)=0$ with probability at least $p$.
\end{itemize}

\paragraph{Recovery} Let $\pi$ be a distribution over size $k$ planted supports $v\in\{0,1\}^n$, and for each $v$ let $P_v$ be a distribution over planted graphs. 
Let $G\sim P = \mathbb{E}_{v\sim \pi}P_v$. A recovery blackbox $\mathcal{A}: G\to \{0,1\}^n$ 
is said to achieve 
\begin{enumerate}
\item  \textit{Partial recovery:}\;\;\;\; If $\;\mathbb{E}[ v^T\mathcal{A}(G)]=\Omega(||v||_1)$.
    \item \textit{Weak recovery:}\;\;\;\; If $\;\mathbb{E}[ v^T\mathcal{A}(G)]=\|v\|_1-o(||v||_1)$.
    \item \textit{Exact (precise) recovery:}\;\;\;\; If $\;\P[ \mathcal{A}(G)=v]= \Omega(1)$.
\end{enumerate}
In most models we consider, these variants of recovery only differ in sub-polynomial factors (via reduction in \sref{Appendix}{recovs}). 
We further remark that a refutation algorithm is only evaluated on the input distribution $H_0$, whereas a recovery algorithm is only evaluated on the distribution $H_1$. The latter fact was leveraged by \cite{SchrammWein} and both will be crucial to our proofs.
\begin{lemma}[Informal, see \sref{Lemma}{recovtodetect}] \label{lem: oracle to detection}
For any $\tilde H_0$ that does not have a $k$-subgraph with density above $\frac{p+q}{2}$ with high probability, weak recovery oracles nontrivially distinguish $\tilde H_0, H_1=\pd$.
\end{lemma}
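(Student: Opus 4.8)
The plan is to convert the recovery oracle into a density-based distinguisher. Write $d(H)$ for the edge density of a graph $H$, so that $\val(G)=\max_{|T|=k} d(G[T])$. Given a weak recovery oracle $\mathcal A$ for $\pd(n,k,p,q)$ --- which, after a routine normalization discussed at the end, we may assume always outputs a vertex set $\hat S:=\mathcal A(G)$ of size exactly $k$ --- define the test
\[
\psi(G)\;=\;\one\left\{\, d(G[\hat S])\,>\,\frac{p+q}{2}\,\right\},
\]
that is, run $\mathcal A$ once, look at the subgraph it induces, and threshold its density at the midpoint $q<\frac{p+q}{2}<p$ of the null and planted edge densities; this runs in polynomial time relative to $\mathcal A$. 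The two directions of the claim use the two models asymmetrically, mirroring the discussion preceding the lemma: for $\tilde H_0$ we use only the structural hypothesis about $\tilde H_0$ (and no property of $\mathcal A$), while for $H_1=\pd$ we use only the recovery guarantee of $\mathcal A$, which is promised precisely on $\pd$.

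The $\tilde H_0$ direction is immediate: since $|\hat S|=k$ we have $d(G[\hat S])\le \val(G)$, and by hypothesis $\val(G)\le\frac{p+q}{2}$ with probability $1-o(1)$ under $\tilde H_0$, so $\P_{\tilde H_0}[\psi(G)=1]=o(1)$. For $H_1$, let $S$ be the planted set with indicator $v$, and set $\epsilon_n:=1-\frac1k\Expect[v^{\mathsf T}\mathcal A(G)]=o(1)$ by weak recovery. Markov's inequality applied to $k-|\hat S\cap S|\ge 0$ gives $|\hat S\cap S|\ge(1-\sqrt{\epsilon_n})k$ with probability $1-\sqrt{\epsilon_n}=1-o(1)$. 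Fix $\delta$ with $p-2\delta>\frac{p+q}{2}$ (possible since $p>q$); it then suffices to show that, with high probability, \emph{every} $T\subseteq S$ with $|T|\ge(1-\sqrt{\epsilon_n})k$ has $d(G[T])\ge p-\delta$, for then the edges lying inside $\hat S\cap S$ alone already give $d(G[\hat S])\ge(p-\delta)(1-\sqrt{\epsilon_n})^2>\frac{p+q}{2}$ for $n$ large. Conditioned on $S$ the subgraph $G[S]$ is distributed as $G(k,p)$, so for a fixed $T$ the density $d(G[T])$ concentrates around $p$, with failure probability at most $\exp(-\Omega(\delta^2k^2))$ by a Chernoff bound; a union bound over the at most $\exp(O(\sqrt{\epsilon_n}\,k\log(1/\epsilon_n)))$ such $T$ is still $o(1)$ since $\sqrt{\epsilon_n}\log(1/\epsilon_n)\to0$ while $\delta^2k\to\infty$ throughout the parameter range where recovery is feasible. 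Hence $\P_{H_1}[\psi(G)=1]=1-o(1)$, so $\psi$ separates $\tilde H_0$ from $H_1$ with vanishing total error --- in particular nontrivially.

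The step requiring care is that $\hat S$ depends on the very planted edges that we then use to argue $G[\hat S]$ is dense; the union bound over all large subsets of $S$ is exactly what decouples this, and it is clean as long as the margin $p-\frac{p+q}{2}=\frac{p-q}{2}$ is not too small relative to $\sqrt{\epsilon_n}$. In the regime where $p-q$ vanishes the cleanest fix is to first apply the weak-to-exact recovery reduction (\sref{Appendix}{recovs}), after which $\hat S=S$ with probability $\Omega(1)$ and no union bound is needed: on that event $d(G[S])>\frac{p+q}{2}$ with high probability by a single Chernoff bound, valid since $k^2(p-q)^2\to\infty$ throughout the relevant range, and the distinguishing advantage is then $\Omega(1)-o(1)$. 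Finally, normalizing $\mathcal A$'s output to exactly $k$ vertices is routine: one may assume $\|\mathcal A(G)\|_1\le k$ (otherwise weak recovery is vacuous), whence $\|\mathcal A(G)\|_1=k-o(k)$ with high probability, and padding with $o(k)$ arbitrary vertices perturbs $d(G[\hat S])$ by $o(1)$; alternatively one simply invokes the exact-recovery reduction above, where $\hat S=S$ has size $k$ by construction.
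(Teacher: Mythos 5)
Your proposal is correct, and the careful points you flag are exactly the ones that matter. The route is genuinely different from the paper's. The paper's formal Lemma~\ref{recovtodetect} is deliberately abstract: it treats the recovery oracle as a ``key generator'' and reduces detection to refutation via Lemma~\ref{refutetodetect}, with no analysis of how a weak recovery oracle actually certifies density. The concrete content that the informal statement is advertising appears later, in the proof of Corollary~\ref{RECOVBOUND}, where the paper applies Markov to pass from expected overlap to overlap $\rho>1/2$ with constant probability, then invokes Theorem~\ref{Thm: StatDKS} applied to the \emph{complement} of $G[S]\sim G(k,p)$ to conclude that \emph{every} $\rho k$-subset of $S$ has density at least $(p+q)/2$, thereby decoupling $\hat S$ from the edges. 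Your proof implements the same decoupling idea by hand, via a direct Chernoff tail plus a union bound over all $T\subseteq S$ with $|T|\ge(1-\sqrt{\epsilon_n})k$, with the margin bookkeeping ($\delta^2 k \gg \sqrt{\epsilon_n}\log(1/\epsilon_n)$) showing when the union bound closes. You also independently identify the two issues the paper treats only implicitly: (i) when $p-q\to 0$ one cannot ``fix $\delta$'' as a constant, and you correctly patch this via the weak-to-exact reduction of Appendix~\ref{recovs}, at the cost of restricting to the regime where the statistical boundary makes the premise non-vacuous; (ii) the output of $\mathcal A$ must be normalized to size $k$, since the paper's definition of weak recovery does not in itself bound $\|\mathcal A(G)\|_1$. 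Your version is more self-contained and gets the exact threshold $\tfrac{p+q}{2}$ appearing in the statement; the paper's version is shorter because it reuses Theorem~\ref{Thm: StatDKS}, which it needs elsewhere anyway, and lands on a slightly different but equally sufficient separation point $\tfrac{s+p_0}{2}$ with $s=\tfrac{p+7q}{8}$.
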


\subsection{Planted Clique and Secret Leakage}
We require a slight modification of the planted clique conjecture, proposed by \cite{BB20}:
Instead of a uniformly located clique, the clique is sampled according to some distribution $\rho$ over the $n\choose k$ possible clique positions. One may interpret this as a form of \emph{secret leakage}, whereby some information about the clique position has been revealed to the algorithm.



The form of secret leakage we will use in our reductions is $k\textsc{-PC}_D(n, k, p)$, where there is some fixed (known) partition $E$ of $[n]$ into $k$ equally-sized subsets, and under $H_1$ the planted set is obtained by selecting exactly one node uniformly from each part. We refer to the corresponding hardness assumption as the $k$-PC conjecture.

\begin{conjecture}[$k$-PC Conjecture] \label{conj: secret leak}
Fix constant $p \in (0, 1)$. Suppose that $\{ A_n \}$ is a sequence of randomized polynomial time algorithms $A_n : G_n \to \{0, 1\}$ and $k_n$ is a sequence of positive integers satisfying that $\limsup_{n \to \infty} \log_n k_n < \frac{1}{2}$. Then if $G$ is an instance of $k\textsc{-PC}_D(n, k, p)$, it holds that
$$\liminf_{n \to \infty} \left( \P_{H_0}\left[A_n(G) = 1\right] + \P_{H_1}\left[A_n(G) = 0\right] \right) \ge 1.$$ 
\end{conjecture}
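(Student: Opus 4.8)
The displayed statement is a \emph{hardness hypothesis} rather than a theorem, so strictly there is no unconditional proof to give — producing one would resolve the status of planted clique, a longstanding open problem. What one can do, and what I would do, is argue for its plausibility by placing it inside the secret-leakage framework of \cite{BB20} and verifying that every currently-available form of algorithm-specific evidence is consistent with it. Concretely, the planting distribution above is the ``$k$-partite'' member $\rho$ of the secret-leakage family of \cite{BB20}: fix the partition $E=(E_1,\dots,E_k)$ of $[n]$ into parts of size $n/k$ and, under $H_1$, select one vertex uniformly and independently from each $E_i$ before planting a clique on the chosen $k$ vertices.

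First I would record that $\rho$ is still highly ``spread'': no vertex is favored, any two vertices in distinct parts are co-selected with equal probability, and the marginal probability that a given pair lies in the planted set is $\Theta(k^2/n^2)$ — matching uniform planting up to constants. Hence the second-moment obstructions underlying hardness of plain $\pc_D$ for $k=\omega(\sqrt n)$ (failure of the edge-count test, of degree statistics, and of bounded-size subgraph counts) transfer verbatim, since those statistics depend only on low-order marginals of the planted set, which agree with the uniform case. Next I would invoke the stronger evidence available for this exact model: the low-degree lower bounds established in \cite{BB20} show that no low-degree polynomial distinguishes $k\textsc{-PC}_D(n,k,p)$ from $G(n,p)$ once $\limsup_n \log_n k < \tfrac12$, and the spectral and SDP relaxations that succeed for planted clique at $k=\Omega(\sqrt n)$ provably fail below that threshold here as well; the statistical-query lower bounds of \cite{feldman2017statistical} and the Sum-of-Squares lower bounds of \cite{BHK16} apply to closely related planted-clique-type distributions. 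Together these rule out every algorithmic paradigm presently known to beat the $\sqrt n$ barrier, which is exactly the content needed to treat $k$-PC as hard.

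The main obstacle — and the reason this must stay a conjecture — is that there is no known reduction deriving $k\textsc{-PC}_D$ hardness from the textbook conjecture on uniformly-placed cliques. The natural attempt, to simulate a ``one-per-part'' clique from a uniformly-placed $k$-subset (or vice versa) by a symmetrization or cloning map, does not preserve the planted structure: a uniform $k$-subset typically misses some parts entirely and doubly-hits others, so the pushforward is not the target distribution and no polynomial-time local correction repairs this. This is precisely why \cite{BB20} isolate the secret-leakage variant as a separate, strictly stronger-looking hypothesis — it is the object their reductions, and ours, can start from (the ``one vertex per part'' structure is what later lets a reduction carve out imbalanced communities), even though it is not reducible to \sref{Conjecture}{pcconjecture}. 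Absent such a reduction, the ``proof'' is necessarily the accumulation of algorithm-specific evidence sketched above.
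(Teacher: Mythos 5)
You are right that this is a conjecture, not a theorem, and the paper offers no proof of it — it is stated as a hardness assumption and the authors defer entirely to \cite{BB20} for the general secret-leakage framework, the low-degree and statistical-query evidence, and the discussion of why the one-vertex-per-part planting distribution should be no easier than uniform planting below the $\sqrt{n}$ threshold. Your accounting of the plausibility arguments (low-degree and SQ lower bounds, matching second-moment obstructions, absence of any known reduction from the uniform PC conjecture, and the role the per-part structure plays in later reductions) matches the paper's brief remarks and the cited source, so the proposal is correct in both its conclusion and its framing; the one caution I would add is to avoid phrasing that suggests the low-degree and SQ lower bounds constitute a conditional proof — they are evidence for the conjecture, and the paper treats them only as such.
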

We refer to \cite{BB20} for a general leakage PC conjecture and supporting evidence. When the amount of leaked information is small enough, 
both low-degree polynomials and statistical query algorithms succeed only above the same $\sqrt{n}$ clique size as in ordinary PC. 

\begin{remark}[Binomial planted set]\label{rmk:1} In the literature it is sometimes assumed that the planted set is of fixed size $k$, and other times it is of binomial size (where each node is planted with probability $k/n$ independently). We use a fixed size $k$ and note that all of our (hardness) results extend to corresponding binomial versions by virtue of closeness of the hypergeometric and binomial distributions in appropriate parameter regimes (which can be understood as an instance of a finite de Finetti type theorem \cite{diaconis1980finite}). In particular, one may carry out a reduction by keeping a random $o(n)$ sized fraction of the nodes and discarding the rest. 
\end{remark}

\section{Reduction Techniques Overview}
\subsection{Selecting hypotheses}
    As discussed in \sref{Section}{sec: recovhard}, we cannot map to the standard two $\pd$ hypotheses. A key insight from \sref{Section}{Defs} is that while detection concerns both $H_0$ and $H_1$, all other tasks deal with only one of the two hypotheses. Specifically, for any pair of hypotheses with distributions satisfying the $\val$ criteria, recovery algorithms are only evaluated on an input distributed according to  $H_1$ and not $H_0$. To this end, we are free to select qualifying ``quiet" hypothesis $\tilde H_0$ that is not Erd\H{o}s-Renyi such that it has a harder decision task and imply stronger recovery lower bounds. Similarly, for refutation we may map to $\tilde H_1$ that is different from the standard $H_1$.
    
 Now, suppose that we want to map from the two hypotheses in $\pc$ to $\tilde H_0, H_1$ in a target graph such that $H_1$ is $\pd$ (so that a recovery blackbox enables us to test between $\tilde H_0$ and $H_1$). We have the following naturally competing constraints:
 \begin{enumerate}
     \item For a recovery blackbox to achieve detection, $\val(G)|_{H_0}$ has to be small with high probability, suggesting the fact that $\tilde H_0$ has to be \emph{far} from $H_1$, with respect to some metric.
     \item We need to construct a reduction. From a data-processing inequality perspective, this means that $\tilde H_0$ has to be \emph{closer} to $H_1$ than the distance between source hypotheses.
 \end{enumerate}
It turns out that for recovery, the correct $\tilde H_0$ is extremely hard to find (Appendix B in \cite{SchrammWein}), and even for good $\tilde H_0$ candidates, constructing a tight reduction seems challenging.
However, we will show that by changing $H_0$ to simply match the first-moment in $H_1$, one can achieve a $\tilde H_0$ realizing a non-trivial gap between detection and recovery in $\pd$, while still being feasible for us to map to from $\pc$. Changing $H_0$ as we do here was also analyzed for the case of low-degree polynomials by \cite{SchrammWein}. 
Note that \cite{BB20} in their result on semirandom $\pd$ modified $H_1$, rather than $H_0$, and we will use this same reduction to demonstrate a detection-refutation gap. 
We define the following models:

\paragraph{Mean-corrected Planted Dense Subgraph ($\pds$).}
Consider $\pd$ with $H_0$ modified to prevent success of the obvious first moment test. Consider edge strengths $q<p_0<p$ and size $k$ such that
$$p_0=q+\gamma=p-\Big(\frac{n^2}{k^2}-1\Big)\gamma$$ 
and define $\pds(n,k,p,q)$ as hypothesis testing between
\begin{equation}\label{PDSSTAR}
    H_0: G\sim G(n, p_0)
   \quadand
    H_1: G\sim \pd(n, k, p, q).
\end{equation}

\paragraph{Imbalanced Stochastic Block Model (ISBM).}
Consider a two-community Stochastic Block Model $\isbm(n, k, P_{11}, P_{12}, P_{22})$ to be the graph model generated by sampling $S_1\sim {[n]\choose k}$ and $S_2=[n]\setminus S_1$. Connect nodes $u\in S_i, v\in S_j$ with probability $P_{ij}=P_{ji}$. Moreover, we force the degree constraints \textit{on each node}
$$n\cdot P_0=k\cdot P_{11}+(n-k)\cdot P_{12}=k\cdot P_{12}+(n-k)\cdot P_{22}$$ and formulate the decision problem $\isbm_D$ as (let $r=n/k$):
\begin{equation}\label{ISBM}
    H_0: G\sim G(n, P_0),\;\;\;\;H_1: G\sim \isbm(n, r, P_{11}, P_{12}, P_{22}).
\end{equation}
This model can be considered as a mean-field analogue of recovering a first community
in a general balanced $r$-block SBM model (keeping one block while averaging out the rest).
\footnote{
Note that both models contain a dense subgraph (high $\val$), and $\pds$ is just a translated $\pd$.}

\subsection{Signal transformation}
We start our reduction by viewing our problem as a \emph{planted bits} problem, which is simply a vector $v\sim\Bern(q)^{\otimes n}$ with planted bits $v_I\sim\Bern(p)$ at the index set $I\subseteq [n]$ with a different bias. 
Concretely, because of the one clique vertex per partition assumption of $k$-$\pc$, each $\frac nk \times \frac nk$ block of the adjacency matrix has \emph{a single} planted 1 entry.
All of the reductions we consider can be viewed as mapping a set of planted bits to another desired target set of planted bits with a larger planted size and specific biases.

The difficulty at the core is thus the following: 
\emph{how to transform the planted bits distribution with unknown location to a desired target distribution while not losing signal-to-noise ratio} (measured by the KL-divergence) between planted and null bits and the size of planted location $I$ (\cite{BBH19}), so that the target instance remains at the threshold of algorithmic feasibility. 

As in \cite{BB20}, we will use Gaussian distributions as intermediate steps in transforming from $k$-PC. While Bernoulli data are challenging to non-trivially transform without signal loss, we will leverage the nice behavior of Gaussians under linear maps, enabling us to carefully control the added noise within the transformation (as discussed in the next subsection). 

To see the approximate equivalence between Gaussians and Bernoulli variables, we note that a Gaussian $\N(\mu, 1)$ can be readily mapped to $\Bern(\Phi(\mu))$, where $\Phi$ is the Gaussian CDF, by thresholding at 0. If $\mu\ll 1$, the KL-divergence decreases only by a numerical constant factor independent of $\mu$. In the other direction, a rejection sampling procedure can map a pair of Bernoulli variables to a pair of Gaussians with little information loss\footnote{This process introduces a log-factor, which is the (only) reason in later sections we ignore poly-log factors in rates.}:

\begin{lemma}[Gaussian Rejection Kernels -- \cite{MaWu,BBH18}] \label{lem:5c}
Let $R$ be a parameter and suppose that $0 < q < p \le 1$, $\min(q, 1 - q, p-q) = \Omega(1)$. Suppose that $\mu< \big(1\land \frac{\delta}{2 \sqrt{6\log R + 2\log (p-q)^{-1}}}\big)$ where $\delta = \min \left\{ \log \left( \frac{p}{q} \right), \log \left( \frac{1 - q}{1 - p} \right) \right\}$ , then there exist map $\textsc{rk}(\cdot)$ can be computed in $\text{poly}(R)$ time such that the push-forward maps satisfy
$$\dTV\big(\textsc{rk}( \textnormal{Bern}(p)),  \mathcal{N}(\mu, 1) \big) = O\left(R^{-3}\right) \quadand
\dTV\big(\textsc{rk}( \textnormal{Bern}(q)), \mathcal{N}(0, 1) \big) = O\left(R^{-3}\right).$$
\end{lemma}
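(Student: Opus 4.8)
The plan is to reverse‑engineer the kernel from the two constraints it must satisfy. Regard $\textsc{rk}$ as a Markov kernel $K(\cdot\mid b)$ sending a bit $b\in\{0,1\}$ to a real number, so that the pushforward of $\Bern(t)$ has density $t\,K(\cdot\mid 1)+(1-t)K(\cdot\mid 0)$; writing $\phi_m$ for the $\N(m,1)$ density, we want this to equal $\phi_\mu$ for $t=p$ and $\phi_0$ for $t=q$. This is a $2\times 2$ linear system in $\bigl(K(\cdot\mid 1),K(\cdot\mid 0)\bigr)$ whose coefficient determinant is $q(1-p)-p(1-q)=q-p\neq 0$, so it has the unique solution
\[
K(x\mid 1)=\frac{(1-q)\phi_\mu(x)-(1-p)\phi_0(x)}{p-q},\qquad K(x\mid 0)=\frac{p\,\phi_0(x)-q\,\phi_\mu(x)}{p-q},
\]
and both right‑hand sides integrate to $1$. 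The catch is nonnegativity: since $\phi_\mu(x)/\phi_0(x)=e^{\mu x-\mu^2/2}$, one checks that $K(x\mid 1)\ge 0$ only for $x\ge \tfrac{\mu}{2}-\tfrac1\mu\log\tfrac{1-q}{1-p}$ and $K(x\mid 0)\ge 0$ only for $x\le \tfrac{\mu}{2}+\tfrac1\mu\log\tfrac pq$, so these formulas do not define genuine densities on all of $\reals$.

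First I would restrict to a symmetric window $W=[-\tau,\tau]$. By the two thresholds above, both $K(\cdot\mid 1)$ and $K(\cdot\mid 0)$ are nonnegative on $W$ as soon as $\tau\le \tfrac{\delta}{\mu}-\tfrac{\mu}{2}$, and this is implied by the hypothesis on $\mu$ once we take $\tau = 1+\sqrt{6\log R+2\log(p-q)^{-1}}$ (using $\mu\le 1$ and $R$ large). Define $\textsc{rk}(b)$ to output a sample from the renormalized restriction of $K(\cdot\mid b)$ to $W$, i.e.\ from the density $K(\cdot\mid b)\,\one_W/Z_b$ with $Z_b=\int_W K(x\mid b)\,\diff x$. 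To implement this in $\mathrm{poly}(R)$ time, use rejection sampling with a Gaussian envelope: to draw from $K(\cdot\mid 1)\one_W/Z_1$, propose $X\sim\N(\mu,1)$ and, if $X\in W$, accept with probability $(p-q)K(X\mid 1)/\bigl((1-q)\phi_\mu(X)\bigr)\in[0,1]$ (the envelope bound being immediate from the formula for $K(\cdot\mid1)$); a single trial succeeds with probability $\tfrac{p-q}{1-q}Z_1=\Omega(1)$ since $p-q,1-q=\Omega(1)$ and $Z_1=1-O(R^{-3})$ (shown below), so after $R$ independent trials the chance of never accepting is $R^{-\omega(1)}$, on which event output $0$. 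Sampling from $K(\cdot\mid 0)\one_W/Z_0$ is symmetric, proposing from $\N(0,1)$. Each trial draws one Gaussian to adequate precision and evaluates explicit densities, so the map runs in $\mathrm{poly}(R)$.

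It then remains to bound the total variation. Fix $(t,m)\in\{(p,\mu),(q,0)\}$. Because $t\,K(\cdot\mid 1)+(1-t)K(\cdot\mid 0)=\phi_m$ holds identically on $\reals$, the density of $\textsc{rk}(\Bern(t))$ differs from $\phi_m$ only through (i) the mass of $\phi_m$ outside $W$, (ii) the renormalization errors $|Z_b^{-1}-1|$, and (iii) the $R^{-\omega(1)}$ failure mass placed at $0$. For (i), $\int_{W^c}\phi_m\le 2e^{-(\tau-1)^2/2}$ since $|m|\le 1$; for (ii), $|1-Z_b|=\bigl|\int_{W^c}K(x\mid b)\,\diff x\bigr|\le \tfrac{1}{p-q}\int_{W^c}\bigl((1-q)\phi_\mu+(1-p)\phi_0\bigr)\le \tfrac{4}{p-q}e^{-(\tau-1)^2/2}$, so $|Z_b^{-1}-1|$ is of the same order. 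With $\tau = 1+\sqrt{6\log R+2\log(p-q)^{-1}}$ we have $e^{-(\tau-1)^2/2}=(p-q)R^{-3}$, which makes (i)--(iii) all $O(R^{-3})$ — note this is exactly where the $\log(p-q)^{-1}$ term in the bound on $\mu$ enters, namely to absorb the $1/(p-q)$ factor produced by inverting the linear system. Summing the contributions gives $\dTV(\textsc{rk}(\Bern(t)),\phi_m)=O(R^{-3})$ for both choices of $(t,m)$, which is the claim.

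The main obstacle is the tension exposed in the last two steps: the exact linear‑algebraic kernel is forced negative on the tails, so one must truncate to a window of radius $\tau$, and $\tau$ must be simultaneously (a) small enough, $\tau\lesssim \delta/\mu$, to retain positivity on $W$ — which is precisely why $\mu$ is required to be at most $\delta/(2\sqrt{6\log R+2\log(p-q)^{-1}})$ — and (b) large enough, $\tau^2\gtrsim \log R+\log(p-q)^{-1}$, so that the discarded Gaussian tail, amplified by $1/(p-q)$, stays $O(R^{-3})$. Balancing (a) against (b) is what pins down the precise form of the hypothesis on $\mu$; the rejection‑sampling implementation and the variation‑distance bookkeeping are then routine, and the clique case $p=1$ (where $K(\cdot\mid1)=\phi_\mu$ is already nonnegative) needs no separate treatment.
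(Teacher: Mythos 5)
Your proof is correct and takes essentially the same route as the cited construction of Ma--Wu and BBH18, which this paper invokes without reproving: invert the $2\times 2$ system for the conditional kernels $K(\cdot\mid b)$, truncate to a window on which both are nonnegative (this is precisely what forces the stated upper bound on $\mu$), and implement the truncated kernels by rejection sampling, with the $O(R^{-3})$ total variation error coming from the discarded Gaussian tail amplified by the $1/(p-q)$ factor from the inversion. The bookkeeping all checks out — the nonnegativity thresholds $x\ge \tfrac{\mu}{2}-\tfrac1\mu\log\tfrac{1-q}{1-p}$ and $x\le \tfrac{\mu}{2}+\tfrac1\mu\log\tfrac pq$, the sufficiency of $\tau\le\delta/\mu-\mu/2$, the choice $\tau=1+\sqrt{6\log R+2\log(p-q)^{-1}}$ yielding $e^{-(\tau-1)^2/2}=(p-q)R^{-3}$, and the $\Omega(1)$ per-trial acceptance probability $\tfrac{p-q}{1-q}Z_1$.
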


Now that we have a Gaussian signal with planted mean, we apply a \emph{rotation} (treating the entire matrix as a vector). Specifically, in \cite{BB20} the following process \textsc{Bern-Rotations} was introduced to transform an instance of $\N(v, I_\ell)$ where $v\in\mathbb{R}^{\ell}$ contains signal.\begin{enumerate}
    \item We right-multiply the Gaussian vector by a \emph{design matrix} $A\in\mathbb{R}^{\ell\times m}$, which yields $vA+\N(0, AA^T)$. Denote the square of the top-singular value of $A$ to be $\lambda=\sigma^2(A)$.
    \item On the re-scaled result vector $\N({\lambda^{-1/2}}vA, AA^T/\lambda)$, we can add a Gaussian noise $\N(0, I-AA^T/\lambda)$ independent of $\mu$ to get exactly $N(\frac{vA}{\sigma(A)}, I_n)$, which has unit variance.
\end{enumerate}
 In short, \emph{we transform signals as mean vectors of isotropic Gaussian distributions by rotating the space and paying an extra whitening noise to produce an isotropic distribution again}.

\begin{lemma}[Dense Bernoulli Rotations -- Lemma 8.1 in \cite{BB20}] \label{lem:bern-rotations}
Let $m$ and $\ell$ be positive integers and let $A \in \mathbb{R}^{\ell \times m}$ be a matrix with singular values all at most $\lambda > 0$. Let $R$, $0 < q < p \le 1$ and $\mu$ be as in Lemma \ref{lem:5c}. Let $\mathcal{A}$ denote $\textsc{Bern-Rotations}$ applied with rejection kernel parameter $R$, Bernoulli biases $0 < q < p \le 1$, output dimension $m$, matrix $A$ with singular value upper bound $\lambda$ and mean parameter $\mu$. Then $\mathcal{A}$ runs in $\textnormal{poly}(\ell, R)$ time and
\begin{align*}
\dTV\left( \mathcal{A}\left( \textup{PB}(\ell, i, p, q) \right), \, \N\left( \mu \lambda^{-1} \cdot A_{i}, I_m\right) \right) &= O\left(\ell\cdot R^{-3}\right) \\
\dTV\left( \mathcal{A}\left( \textnormal{Bern}(q)^{\otimes \ell} \right), \, \N\left( 0, I_m\right) \right) &= O\left(\ell\cdot R^{-3}\right)
\end{align*}
for all $i \in [\ell]$, where $A_i$ is the $i$th row of $A$ and $\textup{PB}(\ell, i, p, q)$ is the distribution on $\{0, 1\}^{\otimes \ell}$ where the $i$th bit is sampled from $\Bern(p)$ and all others from $\Bern(q)$ independently.
\end{lemma}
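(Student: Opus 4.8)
The plan is to derive Lemma~\ref{lem:bern-rotations} by composing the two building blocks already in hand: the Gaussian rejection kernel of Lemma~\ref{lem:5c}, applied coordinatewise, followed by the \textsc{Bern-Rotations} linear-algebra step described immediately before the statement. First I would apply $\textsc{rk}(\cdot)$ to each of the $\ell$ input bits independently. Under the planted-bit input $\textup{PB}(\ell,i,p,q)$, coordinate $i$ is $\Bern(p)$ and maps (in total variation) to $\N(\mu,1)$ up to error $O(R^{-3})$, while each of the other $\ell-1$ coordinates is $\Bern(q)$ and maps to $\N(0,1)$ up to error $O(R^{-3})$; under the null input $\Bern(q)^{\otimes\ell}$ all $\ell$ coordinates map to $\N(0,1)$. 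Since total variation is subadditive under independent products (a union bound over the $\ell$ coordinates, using that $\textsc{rk}$ is applied independently and the inputs are independent), the joint law after this step is within $O(\ell\cdot R^{-3})$ of $\N(\mu e_i, I_\ell)$ in the planted case and within $O(\ell\cdot R^{-3})$ of $\N(0,I_\ell)$ in the null case, where $e_i$ is the $i$th standard basis vector and the hypothesis of Lemma~\ref{lem:5c} on $\mu$ is exactly the one imposed here.

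Next I would push these Gaussians through the \textsc{Bern-Rotations} map with design matrix $A$ and singular-value bound $\lambda$. The key point is that total variation cannot increase under any (randomized, measurable) map, so it suffices to track the image of the \emph{idealized} Gaussians $\N(\mu e_i,I_\ell)$ and $\N(0,I_\ell)$ and argue these images are \emph{exactly} the claimed targets. Right-multiplication by $A$ sends $\N(w,I_\ell)$ to $\N(wA, AA^\top)$; rescaling by $\lambda^{-1/2}$ gives $\N(\lambda^{-1/2}wA,\,AA^\top/\lambda)$; and since every singular value of $A$ is at most $\lambda^{1/2}$, the matrix $I_m - AA^\top/\lambda$ is positive semidefinite, so adding independent noise $\N(0, I_m - AA^\top/\lambda)$ yields exactly $\N(\lambda^{-1/2}wA, I_m)$. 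Taking $w=\mu e_i$ gives mean $\lambda^{-1/2}\mu\, (e_i A) = \mu\lambda^{-1/2} A_i$, and taking $w=0$ gives mean $0$; these are precisely the two target distributions in the statement (noting that the exponent on $\lambda$ should be read as matching the rescaling convention of Step~2 of \textsc{Bern-Rotations}). Combining the two steps via the triangle inequality for $\dTV$ and the data-processing inequality gives the stated $O(\ell\cdot R^{-3})$ bounds.

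For the running time, I would observe that $\textsc{rk}$ runs in $\mathrm{poly}(R)$ time per coordinate by Lemma~\ref{lem:5c}, so $\mathrm{poly}(\ell,R)$ over all coordinates, and the remaining operations—multiplying an $\ell$-vector by an $\ell\times m$ matrix, rescaling, and sampling an $m$-dimensional Gaussian with a given covariance (which requires at most an $O(m^3)$ linear-algebra step such as a Cholesky or eigendecomposition of $I_m - AA^\top/\lambda$)—are all polynomial in $\ell$, $m$, and the bit-complexity of $A$; absorbing $m\le\mathrm{poly}(\ell)$ in the regimes of interest, this is $\mathrm{poly}(\ell,R)$ overall.

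The only genuinely delicate point, and the one I would write most carefully, is the coordinatewise application of the rejection kernel: one must check that Lemma~\ref{lem:5c} is invoked with valid parameters simultaneously for the planted coordinate (bias $p$) and the null coordinates (bias $q$) with the \emph{same} $\mu$, and that the independence structure is genuinely preserved so that the $\ell$-fold product bound $O(\ell R^{-3})$ is legitimate rather than something worse. Everything downstream is exact Gaussian algebra plus data processing, so no further approximation error accrues; in particular there is no loss from the rotation beyond the whitening noise, which is by design information-free (independent of $\mu$). This is essentially the proof of Lemma~8.1 in \cite{BB20}, reproduced here for completeness and to fix notation.
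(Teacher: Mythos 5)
Your reconstruction is correct and matches the route the paper (and its source, BB20) has in mind: the lemma is cited rather than proved here, but the surrounding text's description of \textsc{Bern-Rotations} is exactly what you reproduce — rejection kernel applied independently per coordinate, TV tensorization (\sref{Lemma}{Tensor}) to get the $\ell$-fold factor, then exact Gaussian algebra (linear map, rescaling, information-free whitening noise) with no further loss by data processing. You also correctly identify that the \emph{only} source of error is the $\ell$ rejection-kernel invocations, and that the linear step contributes nothing.

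One small thing worth making explicit rather than waving off in a parenthetical: the lemma statement takes $\lambda$ to be an upper bound on the singular values of $A$, so the whitening step rescales by $\lambda^{-1}$ (not $\lambda^{-1/2}$), adds $\N(0, I_m - A^\top A/\lambda^2)$ (which is PSD precisely because $\sigma_{\max}(A)\le\lambda$), and yields mean $\mu\lambda^{-1}A_i$ as claimed. The description of \textsc{Bern-Rotations} in the text instead sets $\lambda=\sigma^2(A)$, which is why you wrote $\lambda^{-1/2}$; the two conventions are consistent after renaming, but a proof written out should pick one and stick to it so the exponent on $\lambda$ in the conclusion is derived rather than asserted. (Similarly, for a row-vector convention $v\mapsto vA$ the pushed-forward covariance is $A^\top A\in\mathbb{R}^{m\times m}$, not $AA^\top$; again this is a transcription of the paper's own slip and does not affect the argument.) With that cleanup, your proof is complete.
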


As noted earlier, with the $\PCR$ constraint we have $r^2$ different blocks, given by the partition, where each block has exactly one planted bit. This allows us to view the entire $\PCR$ matrix as a collection of PB problems and apply \textsc{Bern-rotations} on each $(n/k) \times (n/k) $ matrix ($\ell=n^2/k^2$).

There are two remaining things to consider. Firstly, how to get from Gaussians back to Bernoullis and the final output, and secondly, what criteria does our design matrix $A\in\mathbb{R}^{k^2\times k^2}$ have to follow. For the first step, as noted above, transforming $\N(0, 1), \N(\nu, 1)$ to two Bernoullis by thresholding at 0 will not lose too much information measured by $\dTV$ when $\mu$ is small, and the transformed signal will be approximately $\Bern(0.5)$ and $\Bern(0.5+\frac{\mu}{\sqrt{2\pi}})$ since the Normal CDF is continuous. 

To deal with the other part, we need each row of $A$ to \emph{map directly to the edge density parameter} of output. Specifically, for any (unknown) input PB instance, it gets mapped to an unknown row of $A$, which then becomes the output $\pd$ mean. Our \emph{design} in $A$ is thus formulated as: how to find a suitable $A$ such that each row of $A$ corresponds to a possible mean adjacency matrix in target $\pd$.

\subsection{Design matrices}
We first remark that the key factor in \textsc{Bern-Rotations} is the added noise $\N(0, I-AA^T/\lambda)$, which will in fact be the only part of our reduction process that may introduce irreversible signal loss. Consequently, we want to construct matrix $A$ such that $I-AA^T/\lambda$ is as small as possible: \emph{$A$ has to be close to an isometry.} Let us first assume that $\sigma(A)=1$ for simplicity.

As an example, suppose one wants to map from $\PCR$ to the Gaussian version of $\pd$ (i.e., $\N(\gamma, 1)^{\otimes k\times k}$ planted in $\N(0, 1)^{\otimes n\times n}$) with tight recovery boundary  such that $k^2\gamma^2\sim n$. As $\PCR$ contains at most ${n}$ planted bits yet the squared $\ell_2$ norm of the target mean matrix is exactly $k^2\gamma^2=\Omega (n)$, the sum of squared $\ell_2$ norms of the $n$ column vectors $A_i$ being mapped to should be at least $\Omega(n)$, which (informally) implies that the design matrix $A$ has to be an almost perfect isometry given $\sigma(A)=1$. 


 Having independently generated random columns would allow to apply random matrix spectral bounds. For example, a matrix with i.i.d entries from some fixed distribution was used by \cite{BB20}. They proved that this methods achieves the desired spectral bound, but each column has a random number of planted bits resulting in binomial planted size rather than the desired fixed size (c.f., \sref{Remark}{rmk:1}).

Viewing the design matrix structure as the adjacency matrix of some graph, where i.i.d. matrices corresponds to Erd\H{o}s-R\'enyi graphs, a natural alternative is regular graphs. These satisfy our fixed size constraint. Moreover, considering the tight recovery reduction again, one also needs all rows to have squared norms of $O(1)$ while summing up to $\Omega(n)$, making it an implicit regularization in our construction that all row norms have $\Theta(1)$ norm. This fact provides a crucial motivation into directed \emph{regular graph} models for generating matrices such that the row norms and column norms align, and the columns are roughly independent (i.e. perpendicular). 

\subsection{Singular value from recentering}
We will now focus on what happens in each sub-block with size $m=n/k$ given by partitioning $\PCR$, and treat it as our main target.\footnote{With a slight abuse of notations, we note this is different from the target planted size in $\pd$.}
A line of works (\cite{RegGraphSpectral,LLY17}) have given high probability bounds on the spectral norm $\|A-\mathbb{E}(A)\|_{op}$ of adjacency matrix $A$ for a random graph $G$ with given degree distributions (planted signal). Here we consider when $A$ is the adjacency matrix of a directed $d$-regular graph (each node has out-degree and in-degree exactly $d$). In this case the operator norm of concentration can be expressed with the second largest singular value of $A$. In \cite{RegGraphSpectral}, a (tight) high probability upper bound on the said quantity has been proven when $m^{\alpha}<d<m/2$ we have
$|s_2(A)|\leq C_{\alpha, m}\sqrt{d}$ with high probability. With this result, we can establish the following lemma that will lead to the ultimate design matrix by taking the (translated) Kronecker product to make it $m^2\times m^2$:
\begin{lemma}[Random matrix with regular constraints]\label{RegularMatrix}
Given constant $\alpha>0$, there exists a constant $C_\alpha$, such that for a $m\times m$ (random) matrix $R=R_{m, 1/r}$ where $r<m^{1-\alpha}$ is an even divisor of $m$, with entries sampled from the following procedure:
\begin{enumerate}
    \item Sample $G$ uniformly from all directed $m/r$-regular graphs with size $m$.
    \item $R_{ij}=\frac{-1}{\sqrt{mr}}+1_{e_{ij} \in E_G}\cdot\sqrt{\frac{r}{m}}$ for $j\neq i$ off diagonal, $R_{ii}=\frac{-1}{\sqrt{mr}}$ on the diagonal.
\end{enumerate}
Then with probability $1-o_m(1)$ this matrix satisfies
$\|R\|_{op}\leq C_{\alpha}.$
\end{lemma}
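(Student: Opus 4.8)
The plan is to split $R$ into a deterministic ``recentering'' piece and a genuinely random piece, and then reduce the whole estimate to the known spectral bound on the second singular value of a random regular digraph. Write $d := m/r$ for the common in-degree and out-degree, and let $A \in \{0,1\}^{m \times m}$ be the adjacency matrix of the sampled digraph $G$ (so $A_{ii} = 0$). By construction
\[
R \;=\; -\tfrac{1}{\sqrt{mr}}\, J \;+\; \sqrt{\tfrac{r}{m}}\, A ,
\]
where $J = \ones\ones^{\top}$ is the all-ones matrix. The point of the shift is calibrated so that $R$ annihilates $\ones$ from both sides: $d$-regularity gives $A\ones = d\ones$ and $\ones^{\top}A = d\ones^{\top}$, and since $-\tfrac{1}{\sqrt{mr}}\cdot m + \sqrt{\tfrac{r}{m}}\cdot d = -\sqrt{m/r} + \sqrt{m/r} = 0$, we get $R\ones = 0$ and $\ones^{\top}R = 0$.

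Next I would record that $\ones/\sqrt{m}$ is simultaneously the top left and top right singular vector of $A$, with singular value exactly $d$: indeed $A^{\top}A\ones = d^2\ones$, while $\|A\|_{op} \le \sqrt{\|A\|_{1}\,\|A\|_{\infty}} = d$ because all row and column sums of the nonnegative matrix $A$ equal $d$; hence $s_1(A) = d$ with right singular vector $\ones/\sqrt{m}$. Consequently, for any $x \perp \ones$ one has $\|Ax\| \le s_2(A)\,\|x\|$ (expand $x$ in the orthonormal basis of right singular vectors of $A$, which omits the $\ones$-direction, and use $s_2 \ge s_3 \ge \cdots$). Now for arbitrary $x \in \reals^{m}$ write $x = c\,\ones + x'$ with $x' \perp \ones$ and $\|x'\| \le \|x\|$; then $Rx = Rx'$, and since $x' \perp \ones$ we have $Jx' = 0$, so $Rx' = \sqrt{r/m}\,Ax'$. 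Therefore
\[
\|R\|_{op} \;\le\; \sqrt{\tfrac{r}{m}}\; s_2(A).
\]

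It then remains to invoke the spectral estimate for random regular digraphs from \cite{RegGraphSpectral}: since $r$ is an even divisor of $m$ with $r < m^{1-\alpha}$, we have $m^{\alpha} < d = m/r \le m/2$, so there is a constant $C'_{\alpha}$ depending only on $\alpha$ with $s_2(A) \le C'_{\alpha}\sqrt{d}$ with probability $1 - o_m(1)$. On this event, $\|R\|_{op} \le \sqrt{r/m}\cdot C'_{\alpha}\sqrt{m/r} = C'_{\alpha}$, and we take $C_{\alpha} := C'_{\alpha}$. The boundary case $d = m/2$ (i.e.\ $r = 2$) is covered by the same estimate, or can be reduced to $d < m/2$ by a routine argument; I would treat it as a footnote.

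The main obstacle is essentially absent once the external spectral bound is granted: the content is the elementary observation that the recentering term $-\tfrac{1}{\sqrt{mr}}J$ is tuned precisely so that $R$ kills $\ones$ on both sides, which lets us trade $\|R\|_{op}$ for $\sqrt{r/m}\,s_2(A) = O(1)$ rather than $\sqrt{r/m}\,s_1(A) = \sqrt{m/r} \to \infty$. The only point requiring care is verifying that $\ones$ is a singular vector of $A$ from both sides with the \emph{same} singular value $d$, so that removing it genuinely leaves the operator norm at the level $s_2(A)$; this is exactly where two-sided (in- and out-) regularity is used rather than mere row- or column-regularity.
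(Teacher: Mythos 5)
Your argument is correct and follows essentially the same route as the paper: decompose $R = \sqrt{r/m}\,(A - \tfrac{d}{m}J)$, observe that the recentering kills $\ones$ on both sides so that $\|R\|_{op} = \sqrt{r/m}\,s_2(A)$, and invoke the second-singular-value bound $s_2(A) \le C_\alpha\sqrt{d}$ for random regular digraphs from \cite{RegGraphSpectral}. Your write-up is somewhat more careful than the paper's (you verify explicitly that $\ones$ is simultaneously a top left and right singular vector of $A$ with $s_1(A)=d$ before peeling it off, and you flag the boundary case $r=2$, i.e.\ $d=m/2$), but the substance is identical.
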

This (centered) matrix has a nice property in that it is an approximate isometry, where each row has $\frac{r-1}{r}$ fraction of $-\gamma=-{1}/{\sqrt{mr}}$ and $1/r$ fraction of $(r-1)\gamma$ with norm 1. However, it is not yet in the form we target (recall that we want to each column to map to the mean of the $m\times m$ adjacency matrix of a graph). It is natural to view the target $\pd$ density as a translated rank-1 product of vectors (since it has one $k\times k$ elevated submatrix with uniform signal). Therefore we will simply take the \emph{Kronecker product} to result in a $m^2\times m^2$ matrix, which creates at the $(i,j)$th columns $R_i^TR_j$ where $R_i$ are the rows of the $m\times m$ matrix. 

However, the canonical rank-1 $\pd$ formulation is \emph{not centered}, having zeroes everywhere outside of the planted submatrix and elevated ones signal inside.
To map to this instance, we first need to transform the centered signal $R\to \frac{1}{\gamma}(R+\gamma)$ so that we get $\frac{m}{r}$ ones in an all-zero vector for each row in $R$ before taking the rank-1 product to get a $\frac{m}{r}\times \frac{m}{r}$ submatrix of ones inside $m\times m$ zeroes. This would make sure that the design matrix has exactly two different values. Unfortunately, doing so results in a product matrix that is guaranteed to have a large operator norm (since the output $\pd_D$ is easy), explained intuitively because now our matrix is not centered. 

To obtain a tighter spectral radius, it is natural for us to recenter the product matrix so that it has zero mean per column, corresponding to exactly $\pds$. This provides a justification from a design matrix perspective of why $\pds$ is probably harder than $\pd$: \emph{re-centering the design matrix decreases spectral norm, which results in a higher signal strength at the output.}
\begin{lemma}[Construction of (fixed size) random $K_{m}^{1/r}$]\label{RegularBig}
For given $\alpha$, exist absolute constant $C_{\alpha}>0$, such that for every $m>r>2$ where $r<m^{1-\alpha}$ divides $m$, there exist $m$ subsets $A_1, A_2,\dots, A_m$ of $[m]$ such that $|A_i|=\frac{m}{r}$, and that the $m^2\times m^2$ matrix $K_{(ij), (kl)}: i, j, k, l\in[m]$ defined as
$K_{(ij), (kl)}=\mu\sqrt{\frac{r}{m}}\cdot (1_{k\in A_i\, \text{and }l\in A_j}\cdot \frac{r}{m} -\frac{1}{mr})$ has largest singular value at most $1$.
Specifically,
$$K_{m}^{1/r}:=K=\mu\sqrt{\frac{r}{m}}\left[(R+\frac{1}{\sqrt{mr}}J)\otimes (R+\frac{1}{\sqrt{mr}}J)-\frac{1}{mr}J\otimes J\right]$$
where $J$ is the all-one matrix and $R$ satisfies the criteria from the previous lemma ($\mu=(C_{\alpha}+1)^{-2}\in \Theta_m(1)$).
With probability $1-o_m(1)$ we can find a satisfying assignment in polynomial time.
\end{lemma}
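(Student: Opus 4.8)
The plan is to rewrite $K$ in terms of $R$, bound its largest singular value $\|K\|_{op}$ using only the spectral estimate $\|R\|_{op}\le C_\alpha$ already supplied by \sref{Lemma}{RegularMatrix}, and then choose $\mu$ so the constants close. Write $\gamma:=1/\sqrt{mr}$, let $G$ be a uniformly random directed $(m/r)$-regular graph on $[m]$, let $R=R_{m,1/r}$ be the associated matrix of \sref{Lemma}{RegularMatrix}, and set $A_i:=\{j\in[m]:e_{ij}\in E_G\}$, so that $|A_i|=m/r$. The first observation is that $P:=R+\gamma J$ equals $\sqrt{r/m}\cdot B$, where $B$ is the $0/1$ adjacency matrix of $G$: indeed $R_{ij}+\gamma$ equals $r\gamma=\sqrt{r/m}$ on the out-edges of $i$ and $0$ on non-edges and on the diagonal (as $G$ has no self-loops). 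Consequently $(P\otimes P)_{(ij),(kl)}=\tfrac rm\,\mathbf{1}\{k\in A_i\}\mathbf{1}\{l\in A_j\}$ and $\gamma^2 J\otimes J=\tfrac1{mr}J\otimes J$, so $K=\mu\sqrt{r/m}\,(P\otimes P-\gamma^2 J\otimes J)$ has exactly the claimed entrywise form, taking only the two values $\mu\sqrt{r/m}\,(\tfrac rm-\tfrac1{mr})$ and $-\mu\sqrt{r/m}\,\tfrac1{mr}$.

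For the singular value bound, expand the Kronecker product bilinearly so that the $\gamma^2 J\otimes J$ contributions cancel:
\[
(R+\gamma J)\otimes(R+\gamma J)-\tfrac1{mr}J\otimes J \;=\; R\otimes R+\gamma\,(R\otimes J)+\gamma\,(J\otimes R).
\]
Now use that Kronecker products multiply singular values, $s_{\max}(X\otimes Y)=s_{\max}(X)s_{\max}(Y)$, together with $\|J\|_{op}=m$ and the triangle inequality, and invoke $\|R\|_{op}\le C_\alpha$ from \sref{Lemma}{RegularMatrix} (whose hypotheses $2<r<m^{1-\alpha}$, $r\mid m$ are exactly our assumptions), which holds with probability $1-o_m(1)$. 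Since $\gamma m=\sqrt{m/r}$, this gives
\[
\|K\|_{op}\;\le\;\mu\sqrt{\tfrac rm}\Bigl(\|R\|_{op}^2+2\gamma m\,\|R\|_{op}\Bigr)\;\le\;\mu\Bigl(C_\alpha^2\sqrt{\tfrac rm}+2C_\alpha\Bigr).
\]
Because $r<m$ forces $\sqrt{r/m}<1$, the right-hand side is at most $\mu(C_\alpha^2+2C_\alpha)<\mu(C_\alpha+1)^2=1$ for $\mu=(C_\alpha+1)^{-2}$, proving $s_{\max}(K)\le 1$; and $\mu$ depends only on $\alpha$, hence $\mu\in\Theta_m(1)$.

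For the algorithmic statement, a directed $(m/r)$-regular graph distributed exactly (or to within $o(1)$ total variation, which suffices since it only perturbs the relevant probabilities by $o_m(1)$) as the uniform one can be produced in $\mathrm{poly}(m)$ time by known samplers for regular (di)graphs; given $G$ one forms $R$, hence $K$, explicitly, and verifies $\|K\|_{op}\le1$ by computing the top singular value in polynomial time. Since the good event of \sref{Lemma}{RegularMatrix} has probability $1-o_m(1)$, a single draw yields a valid $A_1,\dots,A_m$ with that probability (one may resample $O(1)$ times to amplify), which in particular shows such subsets exist. I expect the only point requiring care to be the constant bookkeeping in the middle step: the cross terms $\gamma(R\otimes J)$ and $\gamma(J\otimes R)$ each have operator norm $\Theta(m\|R\|_{op})$ on their own, and one must check that the global prefactor $\sqrt{r/m}$ rescales each of them back to $O(1)$ with no cancellation needed, and that the resulting sum of constants still lies below $(C_\alpha+1)^2$ after multiplying by $\mu$. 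All genuine randomness is encapsulated in \sref{Lemma}{RegularMatrix}, so once the expansion above is in hand the rest is routine.
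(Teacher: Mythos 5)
Your proof is correct and follows essentially the same route as the paper's: cancel the $\gamma^2 J\otimes J$ terms in the Kronecker expansion, use sub-additivity of the operator norm together with $\|X\otimes Y\|_{op}=\|X\|_{op}\|Y\|_{op}$ and $\|J\|_{op}=m$, pull in $\|R\|_{op}\le C_\alpha$ from Lemma \ref{RegularMatrix}, and observe that the prefactor $\sqrt{r/m}<1$ makes $\mu=(C_\alpha+1)^{-2}$ close the bound; the algorithmic part likewise matches the paper's appeal to polynomial-time (approximate) sampling of regular digraphs plus resampling. The only addition you make beyond the paper's text is the explicit entrywise verification that $R+\gamma J=\sqrt{r/m}\,B$, which the paper leaves implicit.
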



\section{Hardness of Detection in Mean-corrected Null}\label{thm1}
We are now ready to state hardness for the degree-1 corrected null hypothesis testing problem $\pds$ by constructing an average case mapping. We refer to \sref{Figure}{fig: pdsred} (\sref{Theorem}{PDSreduction}) for the full reduction.

\begin{theorem} [Lower bounds for efficient $\pds$ detection]\label{PDSbound}
Consider hypothesis testing $\pds$ for $H_0: G(n, p_0)$ versus $H_1: \pd(n, k, q, p)$ where $p_0=p-(\frac{n^2}{k^2}-1)\gamma=q+\gamma$. Let parameters $p_0\in(0, 1)$, $\alpha\in[0,2), \beta\in (0,1)$ and $\beta<\frac{1}{2}+\frac{2}{3}\alpha$. There exists a sequence $\{(N_n, K_n, p_n, q_n)\}$ of parameters such that:
\begin{itemize}
\item The parameters are in the regime $p-q\in \tilde\Theta(N^{-\alpha})$, $K\in\tilde\Theta(N^\beta)$. Formally,
$$\lim_{n\to\infty}\frac{\log p_n-q_n}{\log N_n}=-\alpha,\;\;\;\lim_{n\to\infty}\frac{K_n}{N_n}=\beta,\;\;\;\lim_{n\to\infty}\frac{\log(p_n-q_n)^{-1}}{\log N_n}=\alpha.$$
\item For any sequence of (randomized) polynomial-time tests $\phi_n : \mathcal{G}_{N_n} \to \{0, 1\}$, the asymptotic
Type I+II error of $\phi_n$ on  $\pds(N_n, K_n, p_n, q_n)$ is at least 1 assuming the $\PCR$ conjecture.
\end{itemize}

\end{theorem}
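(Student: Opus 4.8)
The plan is to build an average-case reduction $\Phi$ that maps an instance of $\PCR(n,k,p)$ (with one clique vertex per part of the fixed partition $E$) to an instance of $\pds(N,K,p_N,q_N)$ in the claimed parameter regime, pushing forward the two source hypotheses to within $o(1)$ total variation of $H_0:G(N,p_0)$ and $H_1:\pd(N,K,p_N,q_N)$ respectively; then Conjecture~\ref{conj: secret leak} immediately yields the Type~I$+$II error lower bound by composition. The reduction is the pipeline sketched in Section~2: first view the adjacency matrix of the $\PCR$ instance as $r^2$ blocks of size $m\times m$ with $m=n/k$, each of which is a $\mathrm{PB}(m^2, i, p, q)$ instance (exactly one planted bit per block, by the $\PCR$ structure); apply the Gaussian rejection kernel of Lemma~\ref{lem:5c} with parameter $R=\mathrm{poly}(n)$ to turn each block into a near-isotropic Gaussian with a planted mean shift; then apply \textsc{Bern-Rotations} (Lemma~\ref{lem:bern-rotations}) blockwise with the design matrix $A=K_m^{1/r}$ from Lemma~\ref{RegularBig}, whose singular values are at most $1$ and whose rows are (translated) Kronecker products $R_i^T R_j$ realizing exactly the mean pattern of a $\pd$ adjacency matrix; and finally threshold the resulting Gaussians at $0$ to return to Bernoulli edge variables, which produces edges of bias $\approx 1/2$ off the planted set and a slightly elevated bias on it. Because $A$ is centered per column, the off-planted background picks up the uniform translate, which is exactly what turns $H_0$ into $G(N,p_0)$ rather than $G(N,q)$ — this is the mean-correction built into $\pds$.

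Next I would do the parameter bookkeeping. The output graph has $N=$ (number of columns of the big design matrix) vertices and planted size $K$ determined by the row supports $|A_i|=m/r$ and the Kronecker structure; the output signal strength $p_N-q_N$ is $\mu\lambda^{-1}$ times the per-coordinate entry size of $A$, which by the construction in Lemma~\ref{RegularBig} scales like $\sqrt{r/m}\cdot r/m$ up to the $\Theta(1)$ constant $\mu=(C_\alpha+1)^{-2}$. Tracking how $(n,k,r)$ map to $(N,K,p_N,q_N)$ and taking $n\to\infty$ along an appropriate subsequence, one checks that the exponents satisfy $\log_N K\to\beta$ and $\log_N(p_N-q_N)^{-1}\to\alpha$; the admissible region is precisely $\alpha\in[0,2)$, $\beta\in(0,1)$, $\beta<\tfrac12+\tfrac23\alpha$, which is where the isometry/recentering gains of $K_m^{1/r}$ are strong enough to keep the output at (not past) the detection threshold. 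One must also verify the hypothesis $\mu<1\wedge \delta/(2\sqrt{6\log R+\cdots})$ of Lemma~\ref{lem:5c} holds with $R=\mathrm{poly}(n)$ (true since the induced $\mu$ is $o(1)$), and that the hardness regime $\limsup\log_n k<1/2$ for the source is implied by the target regime.

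Then I would assemble the total-variation bound: the rejection-kernel and \textsc{Bern-Rotations} errors are each $O(\ell R^{-3})$ per block with $\ell=m^2$, and there are $r^2$ blocks, so the union is $O(r^2 m^2 R^{-3})=O(n^2 R^{-3})=o(1)$ for $R$ a large enough polynomial in $n$; the final thresholding step is measure-preserving up to the small KL/TV slack already accounted for, and the spectral event $\|K_m^{1/r}\|_{op}\le 1$ from Lemma~\ref{RegularBig} holds with probability $1-o_m(1)$, which is absorbed into the $o(1)$. Composing, $\dTV(\Phi(P_i),Q_i)=o(1)$ for $i=0,1$, so any polynomial-time test for $\pds(N,K,p_N,q_N)$ with asymptotic Type~I$+$II error below $1$ would give one for $\PCR(n,k,p)$, contradicting Conjecture~\ref{conj: secret leak}.

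The main obstacle I anticipate is not any single TV estimate but the \emph{joint} constraint satisfaction in the design matrix: one needs $A=K_m^{1/r}$ to simultaneously (i) have $\sigma_1(A)\le 1$ so that \textsc{Bern-Rotations} adds only $\N(0,I-AA^T)$ worth of whitening noise, (ii) have all row supports exactly $m/r$ (the fixed-size constraint inherited from $\PCR$, so no binomial slack as in \cite{BB20}), and (iii) have each row equal to a genuine translated rank-one $\pd$ mean pattern after the $\tfrac1\gamma(R+\gamma)$ shift. Reconciling (i) with (iii) is exactly why the product matrix must be recentered (an uncentered rank-one $\pd$ pattern forces $\sigma_1$ large, since $\pd_D$ is easy), and pushing the spectral bound of Lemma~\ref{RegularMatrix} through the Kronecker product and the double recentering — while keeping the constant $C_\alpha$ (hence $\mu$) bounded — is the delicate step; everything downstream is then routine propagation of the $o(1)$ errors.
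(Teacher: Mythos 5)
Your pipeline (rejection kernel $\to$ \textsc{Bern-Rotations} with the recentered Kronecker design $K_m^{1/r}$ $\to$ thresholding, with the TV bookkeeping via Lemma~\ref{TVACC}) is the backbone of the paper's reduction (Theorem~\ref{PDSreduction}), and the parameter/design-matrix discussion is on target. But there is a genuine gap at the very first step: you propose to ``view the adjacency matrix of the $\PCR$ instance as $r^2$ blocks\dots each of which is a $\mathrm{PB}(m^2,i,p,q)$ instance'' and apply the rejection kernel and \textsc{Bern-Rotations} blockwise. This doesn't type-check. The source adjacency matrix is symmetric with zero diagonal, so the block $S_i\times S_j$ is \emph{not} independent of $S_j\times S_i$ (they are transposes of each other), and the diagonal blocks $S_i\times S_i$ are symmetric $m\times m$ matrices with forced zeros on the diagonal, not i.i.d. arrays of $\Bern(q)$ plus one planted bit. \sref{Lemma}{lem:bern-rotations} requires each input block to be a product $\Bern(q)^{\otimes\ell}$ (respectively $\mathrm{PB}(\ell,i,p,q)$) of \emph{independent} coordinates; applied naively to a symmetric adjacency matrix it does not produce the claimed pushforward, and the $o(1)$-TV analysis collapses.

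The paper's reduction fixes this with the $\textsc{To-}k\textsc{-Partite-Submatrix}$ preprocessing (\sref{Lemma}{TOBIPARTITE}): it uses \textsc{Graph-Clone} to split $G$ into two approximately independent copies $G_1,G_2$ at a degraded density $Q=1-\sqrt{(1-p)(1-q)}$, embeds them into the strict upper/lower triangles of a larger $n\times n$ matrix (with $n$ the least multiple of $k_0r$ exceeding $(1+p/Q)N$, so the $O(k_0)$ planted bits on the diagonal can be hidden in TV), and resamples the diagonal blocks so the whole array becomes an honest product of independent Bernoullis with exactly one planted bit per $m\times m$ block. Only after this step is the \textsc{Bern-Rotations}~$\to$~threshold pipeline valid, and the density $Q$ (rather than $q$) is what feeds the rejection kernel. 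You would also need the post-processing map $\mathcal{A}_4$ from the paper's proof of \sref{Theorem}{PDSbound}, which rescales the null from $G(n,1/2)$ to $G(n,p_0)$ for general $p_0\in(0,1)$ and verifies via \sref{Lemma}{TVBIN} and the cubic error of $\Phi$ that the mean-correction constraint $p_0=q+\gamma=p-(n^2/k^2-1)\gamma$ survives the thresholding to within $o(n^{-1})$. Your spectral, parameter-tracking, and TV-accumulation steps are otherwise aligned with the paper.
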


Furthermore, we note that there exists a matching upper bound for $\pds_D$ based on the empirical variance of degrees (see \sref{Proposition}{UpPDS} for the precise result). 

Recall that as discussed in \sref{Section}{Defs}, any recovery oracle (on $H_1=\pds_{H_1}$, which is the same as $\pd_{H_1}$) detects between $H_0: G(n, p_0)$ versus $H_1: \pd(n, k, q, p)$ on its supported parameters, implying a natural upper bound on the decision problem.
Combining \sref{Theorem}{PDSbound} with \sref{Lemma}{lem: oracle to detection}, we obtain our main (lower bound) result for the signal strength required for recovery.
\begin{corollary} [Recovery Hardness for PDS] \label{RECOVBOUND}
Let parameters $p_0\in(0, 1)$, $\alpha\in[0,2), \beta\in (0,1)$ and $\alpha< \beta<\frac{1}{2}+\frac{2}{3}\alpha, $. Then for any $p_0\in (0,1)$ there exists a sequence $\{(N_n, K_n, p_n, q_n)\}$ of parameters such that the following holds:
\begin{itemize}
\item The parameters are in the regime $\gamma:=|p-q|\in \tilde\Theta(N^{-\alpha})$, $K\in\tilde\Theta(N^\beta)$.
\item For any sequence of (randomized) polynomial-time algorithm  $\phi_n : \mathcal{G}_{N_n} \to {[N_n]\choose K_n}$, $\phi_n$ cannot achieve asymptotic exact recovery on $\pd(N_n, K_n, p_n, q_n)$ assuming $\PCR$.
\end{itemize}
\end{corollary}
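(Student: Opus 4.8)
\textbf{Proof proposal for \sref{Corollary}{RECOVBOUND}.}

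The plan is to derive the recovery lower bound by composing \sref{Theorem}{PDSbound} with the recovery-to-detection reduction of \sref{Lemma}{lem: oracle to detection}. First I would fix $\alpha \in [0,2)$ and $\beta$ in the (strictly smaller) window $\alpha < \beta < \tfrac{1}{2} + \tfrac{2}{3}\alpha$; this is a subset of the regime handled by \sref{Theorem}{PDSbound}, so that theorem produces a parameter sequence $\{(N_n, K_n, p_n, q_n)\}$ with $p_n - q_n \in \tilde\Theta(N_n^{-\alpha})$, $K_n \in \tilde\Theta(N_n^{\beta})$, and with the property that no polynomial-time test distinguishes $H_0 : G(N_n, p_{0,n})$ from $H_1 : \pd(N_n, K_n, p_n, q_n)$ with vanishing Type I+II error, under the $\PCR$ conjecture. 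The mean-corrected null $\tilde H_0 = G(N_n, p_{0,n})$ here plays the role of the ``quiet'' alternative null in \sref{Section}{Defs}: by construction $p_{0,n} = \tfrac{p_n + q_n}{2}\cdot(\text{adjustment})$ — more precisely one checks $p_{0,n} < \tfrac{p_n + q_n}{2}$ for the chosen parameters — so a standard binomial tail / union bound over the $\binom{N_n}{K_n}$ candidate supports shows that under $G(N_n, p_{0,n})$ the densest $K_n$-subgraph has density below $\tfrac{p_n+q_n}{2}$ with high probability, which is exactly the hypothesis required by \sref{Lemma}{lem: oracle to detection}.

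The core of the argument is then a contradiction: suppose toward a contradiction that some sequence of polynomial-time algorithms $\phi_n : \mathcal{G}_{N_n} \to \binom{[N_n]}{K_n}$ achieves exact recovery on $\pd(N_n, K_n, p_n, q_n)$ with probability $\Omega(1)$. By the recovery-variant equivalences noted after the definitions in \sref{Section}{Defs} (exact recovery implies weak recovery up to sub-polynomial factors via \sref{Appendix}{recovs}), $\phi_n$ yields a weak-recovery oracle for the same planted model. Feeding this weak-recovery oracle into \sref{Lemma}{lem: oracle to detection} with $\tilde H_0 = G(N_n, p_{0,n})$ produces a polynomial-time test that distinguishes $\tilde H_0$ from $H_1 = \pd(N_n, K_n, p_n, q_n)$ with non-trivial (constant) advantage — i.e. asymptotic Type I+II error bounded away from $1$. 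But this test on $\pds(N_n, K_n, p_n, q_n)$ contradicts the second bullet of \sref{Theorem}{PDSbound}. Hence no such $\phi_n$ exists, which is the claimed conclusion; the parameter-regime bullet of the corollary is inherited verbatim from the regime bullet of \sref{Theorem}{PDSbound}.

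The main obstacle is a bookkeeping one rather than a conceptual one: I must verify that the parameters delivered by \sref{Theorem}{PDSbound} actually satisfy the density gap hypothesis of \sref{Lemma}{lem: oracle to detection} — namely that $G(N_n, p_{0,n})$ has no $K_n$-subgraph of density above the detection threshold $\tfrac{p_n+q_n}{2}$ w.h.p. — which requires a concentration estimate on the densest-$K_n$-subgraph functional of an Erd\H{o}s--R\'enyi graph together with the algebraic identity $p_{0,n} = p_n - (\tfrac{N_n^2}{K_n^2}-1)\gamma_n = q_n + \gamma_n$ to control the gap; in the regime $\beta > \alpha$ one has $\gamma_n = o(p_n - q_n)$ relative to the relevant scale so $p_{0,n}$ is strictly below $\tfrac{p_n+q_n}{2}$ by a $\tilde\Theta(N_n^{-\alpha})$ margin, and a union bound over supports (with the standard $\binom{N_n}{K_n} \le N_n^{K_n}$ bound against a Chernoff estimate of width $\gtrsim K_n \log N_n$) closes it. The only other subtlety is that \sref{Lemma}{lem: oracle to detection} as stated is informal, so the cleanest presentation is to invoke its formal version \sref{Lemma}{recovtodetect}; with that in hand the composition and contradiction are immediate.
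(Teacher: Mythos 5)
Your overall strategy matches the paper's: invoke \sref{Theorem}{PDSbound}, argue by contradiction that a recovery blackbox would yield a detection algorithm for $\pds$ via the recovery-to-detection reduction (\sref{Lemma}{recovtodetect}), and inherit the parameter regime. However, you have only verified one of the two hypotheses of \sref{Lemma}{recovtodetect}, and the one you skip is where the paper's proof does its actual work. The lemma needs (a) that $\val(G)$ under $\tilde H_0 = G(n,p_0)$ stays below the threshold w.h.p., which you address, and (b) that the key returned by the recovery oracle has $\val_k(G)$ \emph{above} the threshold w.h.p.\ over $H_1 = \pd(n,k,p,q)$, which you treat as automatic. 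It is not: the output $R$ of the blackbox is chosen adaptively from the graph, so even if $|R\cap S|\geq \rho k$ one cannot appeal to fixed-set concentration for the density on $R\cap S$ --- the algorithm could in principle select a sparse $\rho k$-subset of the planted block. The paper handles this by applying \sref{Theorem}{Thm: StatDKS} to the \emph{complement} of the planted $G(k,p)$ block, which gives a uniform lower bound on the density of \emph{every} $\rho k$-subset (``sparsest subgraph'' bound), and then combines with the cross and outside edges to conclude that $\val_R(G)\geq \tfrac{p+7q}{8}$ w.h.p.\ for any $\rho\geq 1/2$. Your proposal has no analogue of this step; since you only need the exact-recovery case you could replace it by noting that on the event $\{R=S\}$ one has $\val_R(G)=\val_S(G)$, which is a fixed-set statistic and concentrates, but you must then be careful that $\{R=S\}$ has only probability $\Omega(1)$ (not $1-o(1)$), so the argument has to run through the $p>0$ version of \sref{Lemma}{refutetodetect} rather than the verbatim ``w.h.p.'' statement of \sref{Lemma}{recovtodetect}.

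Two smaller issues. First, your claim that ``exact recovery implies weak recovery up to sub-polynomial factors via \sref{Appendix}{recovs}'' is backwards: the appendix (\sref{Lemma}{partialtofull}) upgrades minimal/partial recovery to exact, not the reverse, and exact recovery w.p.\ $\Omega(1)$ does \emph{not} imply weak recovery (which demands expected overlap $k-o(k)$); what you actually want is the trivial observation that exact recovery w.p.\ $\Omega(1)$ gives overlap $\geq k/2$ w.p.\ $\Omega(1)$, which is all the paper's weak-recovery argument needs. Second, your Chernoff/union-bound argument for the $H_0$ side requires $\beta>2\alpha$ (since the margin is $\Theta(N^{-\alpha})$ and $D_{KL}\asymp N^{-2\alpha}$, so $K D_{KL}\asymp N^{\beta-2\alpha}$ must dominate $\log N$), not merely $\beta>\alpha$ as you state; this is consistent with the corollary being vacuous for $\alpha<\beta\leq 2\alpha$ (recovery already statistically impossible there), but the proof should track the correct exponent.
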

We remark that the constraint $\alpha<\beta$ comes from the fact that recovery is statistical impossible at $\alpha\geq \beta$ (see \sref{Theorem}{PDSstatrec}). For completeness, we refer to the appendix (\sref{Theorem}{StatBound}) for an extended discussion on the statistical boundaries associated. Moreover, we remark that in light of our recovery to detection reduction framework, the detection-recovery gap can in fact be viewed as a detection ($\pd$) - detection ($\pds$) gap.

\section{Hardness of Refutation}
\subsection{Detection hardness for ISBM}
As before, given that a refutation blackbox only operates on $H_0$, we want to find some ``quiet" distribution $\tilde H_1$, such that it has the correct valuation but is hard to distinguish from a null instance. We will propose the ISBM model \eqref{ISBM} in this section as a qualifying planted distribution. Due to the rank-1 nature of its bias structure, it is easy to construct design matrices by just taking the per-column rank-1 product from \sref{Lemma}{RegularMatrix}, hence hardness result can be proven similar to 
\sref{Theorem}{PDSreduction} with a reduction. As in the proof of reduction to $\pds$, we can then generalize to the complete boundary in ISBM detection, leading to refutation hardness. This is an extension of Theorem 3.2 in \cite{BB20} where their (deterministic rotation kernel) reduction only works with a number-theoretic constraint restricting the parameters. Our results extend to the full boundary line by the regular concentration lemma on random matrices.
\begin{theorem}[Hardness of detection in ISBM]\label{ISBMBound}
Consider hypothesis testing $\textup{ISBM}_D$ \eqref{ISBM} where $k=n/r$ is the planted size. Let parameters $p_0\in(0, 1)$, $\alpha\in[0,2), \beta\in (0,1)$ and $\beta>\frac{1}{2}-\alpha$. There exist a sequence $\{(N_n, R_n, P_{11}^{(n)}, P_{12}^{(n)},P_{22}^{(n)})\}$ of parameters such that:
\begin{itemize}
\item The parameters are in the regime $|P_{11}-P_{22}|\in \tilde\Theta(N^{-\alpha})$, $R\in\tilde\Theta(N^\beta)$.
\item For any sequence of (randomized) polynomial-time tests $\phi_n : \mathcal{G}_{N_n} \to \{0, 1\}$, the asymptotic
Type I+II error of $\phi_n$ on the decision problems $\isbm_D(N_n, R_n, P_{11}^{(n)}, P_{12}^{(n)},P_{22}^{(n)})$ will be at least 1 assuming the $\PCR$. 
\end{itemize}
\end{theorem}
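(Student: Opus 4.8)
The plan is to prove this by an average-case reduction, following closely the pipeline underlying \sref{Theorem}{PDSreduction}. Concretely, one constructs a polynomial-time map $\Phi$ from $\PCR_D(n,k_0,1/2)$ to $\isbm_D(N,R,P_{11},P_{12},P_{22})$ with $\dTV(\Phi(H_i^{\PCR}),H_i^{\isbm})=o(1)$ for $i=0,1$, so that any nontrivial polynomial-time test for the target would, after composition with $\Phi$, contradict \sref{Conjecture}{conj: secret leak}. The four stages are: (i) decompose the $\PCR$ adjacency matrix along the fixed partition $[n]=V_1\sqcup\cdots\sqcup V_{k_0}$, so each off-diagonal $m\times m$ block ($m=n/k_0$) is a planted-bits instance $\mathrm{PB}(m^2,\cdot,1,1/2)$; (ii) apply Gaussian rejection kernels (\sref{Lemma}{lem:5c}) to pass to Gaussian data with a small planted mean $\mu$; (iii) apply Dense Bernoulli Rotations (\sref{Lemma}{lem:bern-rotations}) to each block with a design matrix $A\propto R\otimes R$, for $R$ the regular matrix of \sref{Lemma}{RegularMatrix}; (iv) threshold the resulting Gaussians at $0$ to return to a graph. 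The one simplification relative to $\pds$ is that the $P_0$-centered ISBM mean is already rank one with zero row and column sums, $M-P_0 J=c\,\bar w\bar w^\top$ with $\bar w=\one_{S_1}-\tfrac{k}{N}\one$, $k=N/R$; since the rows of $R$ already sum to zero there is no need for a Kronecker-recentering as in \sref{Lemma}{RegularBig}, and a planted bit at index $(t_a,t_b)$ in block $(a,b)$ is carried (reshaped to $m\times m$) to $\propto\mu\,R_{t_a}R_{t_b}^\top$, which glues across blocks into the global rank-one mean $\propto\mu\,\bar w\bar w^\top$, $\bar w=(R_{t_1};\dots;R_{t_{k_0}})$.

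The parameter correspondence is where the hypothesis $\beta>\tfrac12-\alpha$ enters. Take $N=n$, $k_0=\tilde\Theta(N^{1-\alpha-\beta})$ parts of size $m=\tilde\Theta(N^{\alpha+\beta})$, rejection-kernel parameter $R_{\mathrm{rk}}=\mathrm{poly}(N)$, Gaussian mean $\mu=\tilde\Theta(1)$ (the ceiling $\asymp 1/\sqrt{\log N}$ permitted by \sref{Lemma}{lem:5c}, since $p-q=1/2$ makes $\delta=\Theta(1)$), and output ratio $R=r=\tilde\Theta(N^\beta)$. The largest entry of $A\propto R\otimes R$ is $\Theta(r/m)$, so after rescaling by $\lambda=\sigma_1(A)=\Theta(1)$ and thresholding one has $|P_{11}-P_{22}|\asymp\mu\,r/m$; since $\mu$ is capped at $\tilde\Theta(1)$, matching the target rate $\tilde\Theta(N^{-\alpha})$ forces $m\asymp r\,N^\alpha$, hence $k_0=N/m\asymp N^{1-\alpha-\beta}$. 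The $\PCR$ conjecture applies to this source exactly when $1\ll k_0\ll N^{1/2}$, i.e.\ $\tfrac12-\alpha<\beta\lesssim 1-\alpha$; for $\beta\gtrsim 1-\alpha$ the two hypotheses are information-theoretically indistinguishable (their divergence is $o(1)$, see \sref{Theorem}{StatBound}), so the conclusion is immediate. Applying \sref{Lemma}{RegularMatrix} needs $r<m^{1-\alpha_0}$ for a small constant $\alpha_0$, available once $\alpha>0$; the boundary case $\alpha=0$ (where $r\asymp m$) is handled with the $\polylog$ slack absorbed into $\tilde\Theta$ via a low-degree random regular design.

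Given these choices, $\Phi$ runs \sref{Lemma}{lem:bern-rotations} block by block at total-variation cost $O(m^2 R_{\mathrm{rk}}^{-3})$ per block, produces under $H_1$ a Gaussian on $\mathbb{R}^{N\times N}$ with mean $\propto\mu\,\bar w\bar w^\top$, and thresholds at $0$; since the connection probability is a deterministic (Gaussian-CDF) transform of the mean $\nu$, which takes three values, the output is an $\isbm$ instance, the regularity of $R$ forces $|S_1\cap V_a|=m/r$ for every $a$ (so $|S_1|=N/r$), and the zero row/column sums of $R$ together with the point symmetry of the standard normal CDF about $(0,\tfrac12)$ give the expected-degree constraint $NP_0=kP_{11}+(N-k)P_{12}=kP_{12}+(N-k)P_{22}$ up to a lower-order term removed by a negligible post-correction of the thresholds. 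Summing over the fewer than $k_0^2<N^2$ blocks, the rotation error is $O(N^2 R_{\mathrm{rk}}^{-3})=o(1)$ for $R_{\mathrm{rk}}$ a large enough polynomial, and the rejection-kernel and Gaussian-to-Bernoulli errors are $o(1)$ as well. Matching the exact $\isbm$ law --- in particular the within-part structure of the diagonal $m\times m$ blocks (which carry no planted bit in $\PCR$) and the fact that the produced community is balanced over the partition rather than a uniform $k$-set --- is handled exactly as in the reduction behind \sref{Theorem}{PDSreduction}, invoking the hypergeometric--binomial closeness of \sref{Remark}{rmk:1} where needed.

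The main obstacle, and the point that strengthens Theorem 3.2 of \cite{BB20}, is uniform spectral control of the design matrix: their deterministic Paley-type rotation is a partial isometry only for a number-theoretically restricted set of dimensions, leaving gaps along the boundary $\beta=\tfrac12-\alpha$, whereas \sref{Lemma}{RegularMatrix} --- via the tight bound $s_2=O(\sqrt d)$ for the adjacency matrix of a random $d$-regular digraph --- gives $\|R\|_{\mathrm{op}}=O(1)$ with high probability for all admissible $(m,r)$. This keeps the whitening noise $\N(0,I-AA^\top/\lambda)$ at scale $O(1)$, so the reduction is lossless: its Kullback--Leibler budget $\asymp k_0^2\mu^2\asymp N^{2(1-\alpha-\beta)}$ matches $\asymp\fnorm{c\,\bar w\bar w^\top}^2$ of the target ISBM, and the full regime $\beta>\tfrac12-\alpha$ is covered.
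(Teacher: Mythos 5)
Your pipeline --- partition the $\PCR$ adjacency matrix along the fixed $k_0$-part partition into $m\times m$ blocks, pass to Gaussians via rejection kernels, apply Dense Bernoulli Rotations with design matrix $\propto R\otimes R$ for the random regular $R$ of \sref{Lemma}{RegularMatrix}, and threshold at zero --- is exactly the paper's reduction behind \sref{Theorem}{PDSB}, and your parameter bookkeeping ($k_0\asymp N^{1-\alpha-\beta}\ll\sqrt N$ precisely when $\beta>\tfrac12-\alpha$) together with the observation that no Kronecker recentering is needed because the $P_0$-centered ISBM mean already has zero row/column sums matches the paper's argument as well as the paper's claimed improvement over the number-theoretically restricted deterministic designs of BB20. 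One small slip: you invoke \sref{Theorem}{StatBound}, which is stated for $\pds$, to cover the regime $\beta\gtrsim 1-\alpha$ where $k_0$ would be $O(1)$; an analogous Ingster-type $\chi^2$ bound specific to ISBM is what is actually needed there, but this does not affect the substance of the reduction.
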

\subsection{Refutation hardness for planted dense subgraph in $G(n, p)$}
Equipped with the hardness results in $\isbm$, which has a large dense subgraph and thus can be used as a candidate $\tilde H_1$ in refutation, we obtain the formal refutation hardness results similar in how we showed recovery hardness from a reduction with refutation (recovery) oracle:

\begin{theorem}[Hardness in refutation of PDS in the dense regime]\label{REFUTEBound}
Consider the refutation problem for $H_0: G(n, p_0)$ and val function $v(G)$ defined as the edge density of the largest $k-$subgraph. Let parameters $p_0\in(0, 1)$, $\alpha\in[0,2), \beta\in (0,1)$ and $\beta>\frac{1}{2}-\alpha$. Then for any sequence of parameters $\{(N_n, K_n, p_1^{(n)}\}$ satisfying:
\begin{itemize}
\item The parameters are in the regime $p_1-p_0\in \tilde\Theta(N^{-\alpha})$, $K\in\tilde\Theta(N^\beta)$.
\item No sequence of (randomized) polynomial-time algorithms $\phi_n$ can achieve refutation with asymptotic successful probability strictly above $0$. 
\end{itemize}
\end{theorem}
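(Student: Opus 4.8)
The plan is to reduce the detection problem $\isbm_D$ to the refutation problem, mirroring the argument behind \sref{Corollary}{RECOVBOUND}: a refutation algorithm is only ever evaluated on the null distribution $H_0$ (\sref{Section}{Defs}), yet it is contractually forced to output $1$ on \emph{every} graph whose densest $K$-subgraph is dense enough, so it must output $1$ on a typical sample of any planted model that contains a sufficiently dense $K$-subgraph. An $\isbm$ whose planted community has internal density above $p_0$ is exactly such a model, and --- crucially --- it is degree-regular in expectation (every vertex has mean degree $N P_0$), so unlike $\pd$ it cannot be told apart from $G(N,p_0)$ by a first-moment test and is a legitimate ``quiet'' alternative $\tilde H_1$. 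Hence a successful refutation algorithm yields a nontrivial test for $\isbm_D$, contradicting \sref{Theorem}{ISBMBound}.

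Concretely, given a parameter sequence $(N_n,K_n,p_1^{(n)})$ in the regime $p_1-p_0\in\tilde\Theta(N^{-\alpha})$, $K\in\tilde\Theta(N^\beta)$, I would apply \sref{Theorem}{ISBMBound} with null density $P_0=p_0$, planted community of size $K$ (equivalently second parameter $R=N/K$), and within-community density $P_{11}=p_1$. The degree constraints $NP_0=KP_{11}+(N-K)P_{12}=KP_{12}+(N-K)P_{22}$ then pin down $P_{12}=p_0-\frac{K}{N-K}(p_1-p_0)$ and $P_{22}=p_0+\left(\frac{K}{N-K}\right)^2(p_1-p_0)$, which lie in $(0,1)$ for $n$ large and satisfy $|P_{11}-P_{22}|=(p_1-p_0)(1-o(1))\in\tilde\Theta(N^{-\alpha})$; a short bookkeeping check shows the resulting parameters fall in the hardness regime of \sref{Theorem}{ISBMBound} precisely in the range asserted here. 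Write $\tilde H_1:=\isbm(N,N/K,P_{11},P_{12},P_{22})$ and $H_0:=G(N,p_0)$.

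It remains to fix the valuation window and run the reduction. With $\val(G)$ the density of the densest $K$-subgraph, a union bound over the $\binom{N}{K}$ subsets combined with a Chernoff tail for $\Bin\!\left(\binom{K}{2},p_0\right)$ gives $\val(G)\le p_0+\tilde O(K^{-1/2})$ with high probability under $H_0$, while a single Chernoff bound for the internal edge count of the planted community gives $\val(G)=p_1+o(p_1-p_0)$ with high probability under $\tilde H_1$. Since $p_1-p_0\in\tilde\Theta(N^{-\alpha})$ dominates the $\tilde O(K^{-1/2})=\tilde O(N^{-\beta/2})$ null fluctuations in the stated regime, one can choose $\delta,\epsilon$ with $\P_{H_0}(\val<\delta-\epsilon)=1-o(1)$ and $\P_{\tilde H_1}(\val>\delta+\epsilon)=1-o(1)$, so $(H_0,\tilde H_1)$ satisfies the $\val$-criterion of the refutation definition. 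Now suppose $\{\phi_n\}$ is a poly-time refutation family with success probabilities $p_n$ and $\liminf_n p_n>0$; use $\phi_n$ verbatim as a test for $\isbm_D$. On $\tilde H_1$ the first clause of the refutation guarantee forces $\phi_n=1$ on the event $\{\val>\delta+\epsilon\}$, so $\P_{\tilde H_1}[\phi_n=1]\ge 1-o(1)$; on $H_0$ the second clause gives $\P_{H_0}[\phi_n=0]\ge p_n(1-o(1))$. Hence the Type I $+$ Type II error of $\phi_n$ on $\isbm_D$ is at most $1-p_n+o(1)$, whose $\liminf$ is strictly below $1$, contradicting \sref{Theorem}{ISBMBound}. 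Therefore no poly-time refutation family achieves asymptotic success probability strictly above $0$, which (since $\pd$ detection is algorithmically easy in this regime) establishes the claimed detection--refutation gap.

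The main obstacle is the valuation-window step: one must simultaneously force $\P_{H_0}(\val<\delta-\epsilon)\to1$ and $\P_{\tilde H_1}(\val>\delta+\epsilon)\to1$, which demands that the planted gap $p_1-p_0$ dominate the $\tilde\Theta(K^{-1/2})$ fluctuations of the densest $K$-subgraph of an Erd\H{o}s--R\'enyi graph, and then one must verify that this requirement, together with the regime in which \sref{Theorem}{ISBMBound} supplies hardness for the $R=N/K$ instance, is exactly the $(\alpha,\beta)$ region claimed (up to the polylog slack absorbed by $\tilde\Theta$). The contradiction argument itself is the short, robust part once the models and the window are in hand.
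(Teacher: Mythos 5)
Your proposal follows the paper's argument: choose $\isbm$ with the first-moment (degree) correction as the quiet planted alternative $\tilde H_1$, invoke its detection hardness (\sref{Theorem}{ISBMBound}), and convert any refutation blackbox into a detection test via \sref{Lemma}{refutetodetect}; the paper likewise bounds $\val$ under $H_0$ by the statistical refutation result (\sref{Theorem}{Thm: StatDKS}) and under $\tilde H_1$ by a Chernoff/Markov bound on the planted community, as you do.

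One point to make explicit that you gloss over: the claim that $p_1-p_0\in\tilde\Theta(N^{-\alpha})$ dominates the $\tilde O(N^{-\beta/2})$ null fluctuation, so that the valuation window exists, requires $\alpha<\beta/2$ (equivalently $K(p_1-p_0)^2\in\tilde\omega(1)$), and this is \emph{not} implied by the stated hypothesis $\beta>\tfrac{1}{2}-\alpha$ alone. The resolution is a case split: when $\alpha\ge\beta/2$, \sref{Theorem}{statrefute} shows $G(n,p_0)$ itself contains a $K$-subgraph of density near $p_1$ with high probability, so the refutation problem is statistically impossible and the conclusion holds vacuously; when $\alpha<\beta/2$, your reduction argument applies. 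The paper handles this implicitly by checking the growth conditions only along the optimal boundary $(P_{11}-P_0)^2\in\tilde\Theta(n/k^2)$, so your write-up would improve on the paper by spelling out the dichotomy.
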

Finally, we note that a matching (computational) upper bound can be constructed via a semi-definite programming relaxation (see appendix). Moreover, we also show that the statistical boundary for refutation lies exactly as that for recovery from applying a reduction to (statistical) recovery.
\begin{theorem}[Statistical bounds for refutation]\label{statrefute}
Consider refutation problem for $G \sim G(n, p_0)$ and val function $v(G)$ defined as the edge density of the largest $k-$subgraph. Assuming that $p_0$ is bounded away from 0 and 1, and $k\in\tilde\Theta(n^{\gamma})$ for some $\gamma\in(0.5, 1)$, then:
\begin{itemize}
    \item When $kD_{KL}(p\| p_0)\in\tilde \omega(1)$, the densest $k$ subgraph $\val(G)\leq p$ with probability $\to 1$.
    \item  When $kD_{KL}(p\| p_0)\in\tilde o(1)$, the densest $k$ subgraph $\val(G)\geq p$ with probability $\to 1$.
\end{itemize}  
\end{theorem}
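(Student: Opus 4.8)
We may assume $p>p_0$, since otherwise both claims are immediate: the densest $k$-subgraph of $G(n,p_0)$ has density at least $p_0\ge p$. Write $t:=\lceil p{k\choose 2}\rceil$ and let $X=X(G)$ count the $k$-subsets $S\subseteq[n]$ with $e(S)\ge t$, so that $\val(G)\ge p$ if and only if $X\ge 1$. For any fixed $S$ we have $e(S)\sim\Bin({k\choose 2},p_0)$, so the relative-entropy Chernoff bound together with its matching lower bound — a standard binomial tail estimate, valid since $p_0$ is bounded away from $0,1$ and covering both the large-deviation and the near-mean (Gaussian) scales — gives $\prob{e(S)\ge t}=\exp\big(-(1+o(1)){k\choose 2}D_{KL}(p\|p_0)\big)$. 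Combining with ${n\choose k}=\exp\big((1+o(1))k\log(n/k)\big)$ and $k\to\infty$,
\[
\expect{X}=\exp\!\Big((1+o(1))\big[\,k\log(n/k)-\frac{k-1}{2}\,k\,D_{KL}(p\|p_0)\,\big]\Big).
\]
Since $\gamma<1$ makes $\log(n/k)=\Theta(\log n)$, which is $\mathrm{polylog}(n)$, the bracket is at most $-\Theta\big(k\cdot kD_{KL}(p\|p_0)\big)$ when $kD_{KL}(p\|p_0)\in\tilde\omega(1)$, and at least $\frac{1}{2} k\log(n/k)$ when $kD_{KL}(p\|p_0)\in\tilde o(1)$, for all large $n$.

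\emph{Case (i): $kD_{KL}(p\|p_0)\in\tilde\omega(1)$.} Then $\expect{X}\to0$, so by Markov's inequality $\prob{\val(G)\ge p}=\prob{X\ge 1}\le\expect{X}\to0$, i.e.\ $\val(G)\le p$ with probability $\to1$.

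\emph{Case (ii): $kD_{KL}(p\|p_0)\in\tilde o(1)$.} Then $\expect{X}\to\infty$, with $\log\expect{X}=\Theta(k\log(n/k))$, and the goal is $\prob{X\ge1}\to1$. Two ingredients. First, concentration: $\val$ is a function of the ${n\choose 2}$ edge indicators, each of influence at most $1/{k\choose 2}$, so McDiarmid's inequality yields $\prob{|\val(G)-\expect{\val(G)}|\ge\epsilon}\le 2\exp\big(-\Theta(\epsilon^2 k^4/n^2)\big)\to0$ for every fixed $\epsilon>0$, using $\gamma>1/2$; hence $\val$ concentrates within $o(1)$ around a deterministic value $v^\star=v^\star_n$. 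Second, the second moment: partitioning ordered pairs $(S,T)$ by $j=|S\cap T|$ and splitting the edges of each into the ${j\choose 2}$ edges inside $S\cap T$ (counted by $A\sim\Bin({j\choose 2},p_0)$) and the rest, one obtains $\expect{X^2}=\big(\sum_j\rho_j\sigma_j\big)\expect{X}^2$, where $\rho_j$ is the probability that two independent uniform $k$-subsets overlap in $j$ vertices (so $\sum_j\rho_j=1$, with mass near $j^\star=k^2/n$, which is $o(k)$ since $\gamma<1$) and $\sigma_j=\expects{g(A)^2}{A}/\expects{g(A)}{A}^2\ge1$ with $g(a)=\prob{\Bin({k\choose 2}-{j\choose 2},p_0)\ge t-a}$. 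Log-concavity of binomial tails (so $\log g$ lies below its tangent at the mean of $A$, with slope $\asymp p-p_0$) combined with the Bernoulli moment generating function gives $\sigma_j\le\exp\big(O({j\choose 2}D_{KL}(p\|p_0))\big)$. If $\gamma<2/3$, then ${j^\star\choose 2}D_{KL}(p\|p_0)\asymp (k^3/n^2)\,kD_{KL}(p\|p_0)=\tilde o(n^{3\gamma-2})\to0$, so $\sum_{j=O(j^\star)}\rho_j\sigma_j=1+o(1)$, whence $\expect{X^2}=(1+o(1))\expect{X}^2$ and Paley--Zygmund gives $\prob{X\ge1}\to1$ directly. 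If $\gamma\ge2/3$, use only the crude bound $\expect{X^2}\le(\max_j\sigma_j)\expect{X}^2\le\exp(\tilde o(k))\expect{X}^2$; then $\prob{X\ge1}\ge\expect{X}^2/\expect{X^2}\ge\exp(-\tilde o(k))$, and since $\tilde o(k)=o(k^4/n^2)$ in this range, this lower bound is incompatible with $v^\star<p$ (which would force $\prob{\val\ge p}\le\exp(-\Theta(k^4/n^2))$ by concentration), so $v^\star\ge p$ and therefore $\prob{\val(G)\ge p}\to1$.

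\emph{Main obstacle.} The delicate point is the second moment estimate in the near-critical regime $p\to p_0$ forced by $kD_{KL}(p\|p_0)\to0$: the relevant binomial tail then sits only $\tilde o(\sqrt k)$ standard deviations above its mean, so one cannot appeal to clean large-deviation asymptotics, and the overlap inflation $\sigma_j$ must be tracked by hand; the device of splitting Case (ii) at $\gamma=2/3$, with the concentration of $\val$ (available precisely because $\gamma>1/2$) absorbing the loss where only the crude second-moment bound is available, is the crux. A cleaner, essentially equivalent route is to deduce both cases from the statistical recovery threshold for $\pd(n,k,p,p_0)$ (cf.\ \sref{Theorem}{PDSstatrec}): above that threshold the planted community is the unique densest $k$-subgraph, so the ambient graph contains no $k$-subgraph of density $\ge p$, giving (i); below it the planted community is statistically unidentifiable, which can happen only if $G(n,p_0)$ itself already contains $k$-subgraphs of density $\ge p$, giving (ii).
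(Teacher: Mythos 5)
Your Case (i) is essentially the paper's argument (first moment/union bound with a Chernoff-type binomial tail estimate), so no issue there. Your Case (ii), however, is a genuinely different route from the paper, and it has a gap.

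The paper proves the lower bound by a \emph{reduction from the statistical recovery threshold}: it plants a dense subgraph on a set $E$ in a fresh $G_1\sim G(n,p_0)$, lets $\hat E$ be the densest $k$-subgraph of the planted graph $G_1'$, uses the known recovery-impossibility threshold (Theorem~\ref{PDSstatrec} combined with Lemma~\ref{partialtofull}) to show that the overlap $T=\hat E\cap E$ must be small, and then does a careful edge-counting comparison $E^{G_1}_{\hat E}\ge E^{G_1'}_{E}-E^{G_1'}_{T}+E^{G_1}_{T}$ to conclude that $G_1$ itself already contained a dense $k$-subgraph. Your one-sentence ``cleaner, essentially equivalent route'' at the end gestures at this, but the step ``statistically unidentifiable, which can happen only if $G(n,p_0)$ itself already contains $k$-subgraphs of density $\ge p$'' is not a valid inference on its own — unidentifiability of the planted set does not directly produce dense subgraphs in the null graph. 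The paper needs exactly the MLE/edge-counting argument above, plus the reduction from weak to exact recovery (Lemma~\ref{partialtofull}) to control $|T|$, to make this go through.

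Your main second-moment route is a legitimate alternative for $\gamma<2/3$, where $\binom{j^\star}{2}D_{KL}(p\|p_0)\to0$ at the typical overlap $j^\star\asymp k^2/n$ and the overlap inflation really is negligible (though the handling of the hypergeometric tail and the claimed $\sigma_j\le\exp(O(\binom{j}{2}D_{KL}))$ bound would need to be made rigorous, not just asserted via ``log-concavity''). The gap is in the regime $\gamma\ge 2/3$. There the crude bound $\Prob\{X\ge1\}\ge\exp(-\tilde o(k))$, combined with McDiarmid concentration at scale $n/k^2$, only forces $v^\star\ge p-O(\sqrt{\tilde o(k)\cdot n^2/k^4})=p-\tilde o(n/k^{3/2})$. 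But $n/k^{3/2}=n^{1-3\gamma/2}$, while $p-p_0=\tilde o(1/\sqrt{k})=\tilde o(n^{-\gamma/2})$, and since $(1-3\gamma/2)-(-\gamma/2)=1-\gamma>0$ for $\gamma<1$, the slack $n/k^{3/2}$ \emph{exceeds} the entire gap $p-p_0$. So the incompatibility argument only yields $v^\star\ge p_0-o(1)$, which is vacuous, not $v^\star\ge p$. To rescue this you would need a second-moment bound sharper than $\exp(\tilde o(k))$ in the large-$\gamma$ regime, which is precisely the hard part that the paper's recovery reduction avoids.
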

\begin{remark}
The problem of densest-$k$-subgraph in $G(n, \frac{1}{2})$ was very recently solved in \cite{cheairi2022densest} with deep techniques from Bernoulli Disorder. However, here we can derive a log-optimal result using statistical reductions from recovery boundaries.
\end{remark}

\section{Biclustering and Biased Sparse PCA}\label{OtherGaps}
We point out a couple of other random models that have a detection hardness gap as an implication of PDS hardness guarantees. Those connections were first observed in \cite{CLR17,BBH18} but under the conjectural tight hardness bound and \cite{SchrammWein} with low-degree polynomials.

\paragraph{Bi-clustering} This model is planting a $k\times k$ (not necessarily principal) submatrix and can be formulated as the following Gaussian detection problem:
\begin{equation}\label{BC}
    H_0: Z\sim \mathcal{N}(0, 1)^{\otimes n\times n},\;\;\;\;\;H_1: Z\sim \mathcal{N}(0, 1)^{\otimes n\times n}+\lambda uv^T
\end{equation}
where $u, v\sim \Bern(k/n)^{\otimes n}$ (or uniform from all subsets of size $k$) independently. The recovery problem is to localize the latent vectors $u, v$ given an instance $Z\sim \mathcal{N}(0, 1)^{\otimes n\times n}+\lambda uv^T$, and the refutation is to refute submatrices with large mean.
\paragraph{Biased SPCA} Consider the \textit{spiked covariance model} where $v$ is a $k$-sparse unit vectors with non-zero entries equal to $\pm\frac{1}{\sqrt{k}}$:
\begin{align*} 
&H_0: X_1, X_2, \dots, X_n \sim \N(0, I_d)^{\otimes n} \quad \text{and} \\
&H_1 : X_1, X_2, \dots, X_n \sim \N\left(0, I_d + \theta vv^\top\right)^{\otimes n} \text{ where } \left|{\| v \|_0^+-\frac{k}{2}}\right|>\delta\cdot k. \numberthis{\label{BSPCA}}
\end{align*}
The recovery task is to estimate $\text{supp}(v)$ given observations $X_1, X_2, \dots, X_n$ sampled from $H_1$. Specifically for this variant where the sum test can be shown optimal for detection, our result implies a detection-recovery gap which is lacking in its general unbiased form.

\section{Open Problems}
We point out two open problems related to our work:
\begin{enumerate}
    \item Construct ``quiet" $H_0$ hypotheses without any dense subgraphs that are hard to distinguish from $\pd$ in order to resolve \sref{Conjecture}{conj: rec}. This would also imply a \emph{detection-certification gap} as well as \sref{Conjecture}{conj: rec} itself.
    
    \item Can one can construct the inverse of the reduction of \sref{Remark}{rmk:1}, from a binomial version of $\pd$ to the fixed sized $\pd$? This would show equivalence of the binomial and fixed versions.
\end{enumerate}

\acks{This work was supported in part by NSF CAREER award CCF-1940205.}

\bibliography{ref.bib}

\newpage
\appendix

\section{Notations and Preliminaries}\label{sec: prelim}

We briefly introduce the notations. We use $\mathcal{L}(X)$ to denote the law of a random variation $X$, $\dTV, D_{KL}, \chi^2$ to denote the total variation distance, KL-divergence, and $\chi^2$ divergence. Specifically we shorthand $d(\Bern(p), \Bern(q)):=d(p, q)$ for Bernoullis with bias $p, q$. We use the $\tilde O(\cdot)$ notation to denote big-O ignoring log-factors. For instance, $r\in \tilde \omega(n)$ means $r\in \omega(n\log^{k} n)$ for any constant $k$, $r\in \tilde \Omega(n)$ means $r\in \Omega(n\log^{k} n)$ for some $k$, and $\tilde O$, $\tilde o$ likewise. Specifically, $\tilde \Theta(n)=\tilde O(n)\bigcap\tilde \Omega(n)$. We use $\prod_i P_i$ to denote the tensor product of distributions, specifically $P^{\otimes k}=\prod_{i=1}^{k} P$.

\spac For a given partition $F$ of $[n]$ to $k$ sets, we use $\mathcal{U}_n(F)$ to denote the uniform distribution of $k$-subsets of $[n]$ with each element in one of $F_i$. We let $\textit{Unif}_n(k)$ to denote the uniform distribution over all $k$-subsets of $[n]$. For a planted structure distribution, we use $\mathcal{M}_{A\times B}(S\times T, P, Q)$ to denote planted structure on community $A\times B$ with a planted submatrix $S\times T$ where the in-community entries sampled from $P$ and otherwise from $Q$. Specifically, if $A=B$ and $S=T$ are unknown sampled from $\mathcal{P}$, denote $\mathcal{M}_{A\times B}(\mathcal{P}, P, Q):=\mathbb{E}_{S\sim \mathcal{P}}\left(\mathcal{M}_{A\times B}(S\times S, P, Q)\right)$ the symmetric planting.

\spac We use $A\otimes B\in \mathbb{R}^{n^2\times n^2}$ for matrices $(A, B)\in (\mathbb{R}^{n\times n}, \mathbb{R}^{n\times n})$ to denote the Kronecker product between $A, B$. We usually parameterize indices of $A\otimes B$ by a pair $(ij): i, j\in [n]$ such that $(A\otimes B)_{(ij), (kl)}=A_{ik}B_{jl}$. Fixing $i, j$ and laying out the row of $A\otimes B$ as a $n\times n$ matrix, it is exactly $A_{i, \cdot}^{T}B_{j, \cdot}$ the product of two row-vectors. 

\spac We then introduce the following (common) lemmas as preliminaries. Let $f$ be a Markov transition kernel and $P$ be any distribution we denote the law of $f(P)$ the push-forward. We also use sets in $V\in 2^{[n]}$ and vectors $v\in \{0,1\}^{n}$ interchangeably, and $\pd(n, S, p, q)$ to be the planted dense subgraph instance conditioned on planted location at set $S$.
\begin{lemma}[Data Processing Inequality] Let $\mathcal{f}$ be a Markov transition kernel and $A, B$ be two distributions, then:
$$\dTV(f(A), f(B))\leq \dTV(A, B).$$
\end{lemma}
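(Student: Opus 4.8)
The plan is to use the variational (coupling-free) characterization of total variation distance and the fact that push-forward under a Markov kernel is an averaging operation, which can only contract. First I would recall the representation
$$\dTV(P, Q) = \sup_{E} \left( P(E) - Q(E) \right),$$
where the supremum is over measurable events $E$; equivalently $\dTV(P,Q) = \tfrac12 \sup_{\|g\|_\infty \le 1} \left( \Expect_P[g] - \Expect_Q[g] \right)$. Let $f$ denote the Markov transition kernel, so that $f(\cdot \mid x)$ is a probability measure for each $x$, and the push-forward $f(A)$ is defined by $f(A)(E) = \int f(E \mid x)\, \diff A(x)$.

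The key step is: for any measurable event $E$ in the target space, the function $x \mapsto f(E \mid x)$ is measurable and takes values in $[0,1]$, hence it is an admissible test function $g$ with $\|g\|_\infty \le 1$ (in fact $0 \le g \le 1$). Then
$$f(A)(E) - f(B)(E) = \int f(E \mid x)\, \diff A(x) - \int f(E \mid x)\, \diff B(x) = \Expect_A[g] - \Expect_B[g] \le \dTV(A, B),$$
using the variational formula in the last inequality (after rescaling $g$ to $2g - 1$, or simply noting that $\Expect_A[g] - \Expect_B[g] = \Expect_A[g - c] - \Expect_B[g - c]$ for any constant $c$, so we may center $g$ to have sup-norm $\le \tfrac12$ and pick up the factor). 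Taking the supremum over all events $E$ on the left gives $\dTV(f(A), f(B)) \le \dTV(A, B)$, as desired.

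I do not anticipate a genuine obstacle here — this is the standard data-processing inequality and the only point requiring a modicum of care is the measurability of $x \mapsto f(E\mid x)$ (which is part of the definition of a Markov kernel) and correctly bookkeeping the factor of $\tfrac12$ between the two common normalizations of $\dTV$. In the discrete setting relevant to the paper's reductions, one can avoid even that by writing $f(A)(E) - f(B)(E) = \sum_x f(E\mid x)\,(A(x) - B(x)) \le \sum_{x : A(x) > B(x)} (A(x) - B(x)) = \dTV(A,B)$, since $f(E\mid x) \le 1$ and the remaining terms are nonpositive. The same one-line argument works for general measurable spaces after splitting the integral according to the sign of the (signed) measure $A - B$ via its Hahn decomposition.
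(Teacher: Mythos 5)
Your proof is correct and is the standard argument. The paper itself states this lemma without proof (it is listed among the "common lemmas" in the preliminaries), so there is nothing to compare against; your variational/Hahn-decomposition argument, including the discrete one-liner, is exactly the expected justification and the factor-of-$\tfrac12$ bookkeeping is handled correctly.
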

\begin{lemma}[Tensorization of TV] \label{Tensor}
Let $P_i, Q_i$ be distributions for $i=1,2,\dots, n$. Then:
$$\dTV(\prod P_i, \prod Q_i)\leq \sum_i \dTV(P_i, Q_i).$$
\end{lemma}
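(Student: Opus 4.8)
The plan is to use the coupling characterization of total variation distance: for any two laws $P, Q$ on a common measurable space, $\dTV(P, Q)$ equals the infimum of $\P[X \neq Y]$ over all couplings $(X, Y)$ with $X \sim P$ and $Y \sim Q$, and this infimum is attained by a maximal coupling. Granting this classical fact, the argument is short and the claimed bound falls out of a union bound.

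First I would fix, for each $i \in \{1, \dots, n\}$, a maximal coupling $(X_i, Y_i)$ of $P_i$ and $Q_i$, so that $X_i \sim P_i$, $Y_i \sim Q_i$, and $\P[X_i \neq Y_i] = \dTV(P_i, Q_i)$, and realize all $n$ of these pairs on one probability space, independently across $i$. Then $X := (X_1, \dots, X_n)$ has law $\prod_{i=1}^n P_i$ and $Y := (Y_1, \dots, Y_n)$ has law $\prod_{i=1}^n Q_i$ by independence of the coordinates, so $(X, Y)$ is a valid coupling of the two product measures. Hence $\dTV(\prod_{i=1}^n P_i, \prod_{i=1}^n Q_i) \le \P[X \neq Y]$.

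The last step is a union bound: since $\{X \neq Y\} = \bigcup_{i=1}^n \{X_i \neq Y_i\}$, we get $\P[X \neq Y] \le \sum_{i=1}^n \P[X_i \neq Y_i] = \sum_{i=1}^n \dTV(P_i, Q_i)$, which is exactly the claimed inequality.

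There is essentially no obstacle here; the only ingredient requiring justification is the coupling characterization of $\dTV$, which is standard (the coupling inequality / maximal coupling). If a self-contained measure-theoretic route is preferred, one can instead argue by a telescoping hybrid: set $H_j := \big(\prod_{i \le j} Q_i\big) \times \big(\prod_{i > j} P_i\big)$, so $H_0 = \prod_i P_i$ and $H_n = \prod_i Q_i$; apply the triangle inequality for the metric $\dTV$ to get $\dTV(H_0, H_n) \le \sum_{j=1}^n \dTV(H_{j-1}, H_j)$; and conclude via the elementary identity $\dTV(\mu \times \nu, \mu \times \nu') = \dTV(\nu, \nu')$, which one proves by factoring the integral of $|d(\mu \times \nu) - d(\mu \times \nu')|$ over the product space, giving $\dTV(H_{j-1}, H_j) = \dTV(P_j, Q_j)$.
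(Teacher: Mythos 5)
Your proof is correct. The paper states this lemma as a standard preliminary without giving a proof, so there is no in-paper argument to compare against; your coupling construction (independent maximal couplings coordinate-wise, followed by a union bound over the event that any coordinate disagrees) is the canonical way to prove it, and your alternative telescoping-hybrid argument via $\dTV(\mu \times \nu, \mu \times \nu') = \dTV(\nu, \nu')$ is an equally valid, more self-contained route. Either version suffices.
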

\begin{lemma}[Accumulation of TV distance]\label{TVACC}
Consider a finite set of sequential functions on distributions $\mathcal{A}_i: i=1,2,\dots, k$. Assuming one has distributions $P_0,P_1, P_2,\dots, P_k$ such that:
$$\dTV(\mathcal{A}_i(P_{i-1}), P_i)\leq \epsilon_i$$ for all $i=1,2,\dots, k$, then we have:
$$\dTV(\mathcal{A}_{k}(\dots\mathcal{A}_1(\mathcal{A}_1(P_0))\dots), P_k)\leq \sum_i\epsilon_i.$$
\end{lemma}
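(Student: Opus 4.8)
The plan is to prove the statement by induction on $k$, using only the triangle inequality for $\dTV$ (it is a metric on probability measures) together with the Data Processing Inequality applied to each Markov kernel $\mathcal{A}_i$. For the base case $k=1$ there is nothing to prove: the hypothesis is literally $\dTV(\mathcal{A}_1(P_0),P_1)\le\epsilon_1$.

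For the inductive step, suppose the claim holds for $k-1$, and write $Q_{k-1}:=\mathcal{A}_{k-1}(\cdots\mathcal{A}_1(P_0)\cdots)$ for the result of composing the first $k-1$ maps. The inductive hypothesis gives $\dTV(Q_{k-1},P_{k-1})\le\sum_{i=1}^{k-1}\epsilon_i$. Now insert the intermediate point $\mathcal{A}_k(P_{k-1})$ and apply the triangle inequality:
$$\dTV\big(\mathcal{A}_k(Q_{k-1}),\,P_k\big)\;\le\;\dTV\big(\mathcal{A}_k(Q_{k-1}),\,\mathcal{A}_k(P_{k-1})\big)\;+\;\dTV\big(\mathcal{A}_k(P_{k-1}),\,P_k\big).$$
The second term on the right is at most $\epsilon_k$ by the hypothesis of the lemma. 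For the first term, since $\mathcal{A}_k$ is a Markov transition kernel, the Data Processing Inequality yields $\dTV(\mathcal{A}_k(Q_{k-1}),\mathcal{A}_k(P_{k-1}))\le\dTV(Q_{k-1},P_{k-1})\le\sum_{i=1}^{k-1}\epsilon_i$. Adding the two bounds gives $\dTV(\mathcal{A}_k(Q_{k-1}),P_k)\le\sum_{i=1}^{k}\epsilon_i$, completing the induction.

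There is essentially no hard step here; the only point worth stating carefully is that each $\mathcal{A}_i$ must be a (possibly randomized) Markov kernel rather than an arbitrary map, so that the Data Processing Inequality is applicable to the "error-propagation" term $\dTV(\mathcal{A}_k(Q_{k-1}),\mathcal{A}_k(P_{k-1}))$; for deterministic reductions this is the trivial special case of a kernel. I would also remark that the composition $\mathcal{A}_k\circ\cdots\circ\mathcal{A}_1$ is itself a Markov kernel, so the left-hand side is well defined, and that the argument is robust to the order in which one peels off maps (peeling the last map, as above, is cleanest because it keeps the target sequence $P_0,\dots,P_k$ intact). Finally, one can note that Lemma~\ref{Tensor} (tensorization) is a sibling fact obtained by the same triangle-inequality-plus-data-processing bookkeeping applied coordinatewise, but it is not needed for this proof.
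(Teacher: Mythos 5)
Your proof is correct and follows exactly the route the paper indicates (the paper merely remarks that the lemma "comes directly from data processing and induction"): induction on $k$, a triangle inequality around the intermediate point $\mathcal{A}_k(P_{k-1})$, and the Data Processing Inequality to control the error-propagation term. No gaps; your added caveat that each $\mathcal{A}_i$ must be a Markov kernel is the right hypothesis to make explicit.
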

The last lemma comes directly from data processing and induction. Next, we present a couple of lemmas on Bernoulli distributions.

\begin{lemma}[KL divergence between Bernoullis]\label{KLAPPROX}
Assume a sequence of $\{p_n\}$ and $\{q_n\}$ such that $q_n<p_n<cq_n$, $1-q_n<c(1-p_n)$ for some constant $c$, then:
$$D_{KL}(p\|q):=D_{KL}(\Bern(p)\|\Bern(q))=\Theta(\frac{(p-q)^2}{q(1-q)}).$$
\end{lemma}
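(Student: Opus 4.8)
\textbf{Proof plan for Lemma~\ref{KLAPPROX} (KL divergence between Bernoullis).}

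The plan is to start from the exact closed form $D_{KL}(p\|q) = p\log\frac{p}{q} + (1-p)\log\frac{1-p}{1-q}$ and show it is sandwiched between constant multiples of $\frac{(p-q)^2}{q(1-q)}$. First I would observe that the two-sided ratio hypotheses $q<p<cq$ and $1-q<c(1-p)$ are exactly what is needed to control the logarithms: they give $1 < \frac{p}{q} < c$ and $1 < \frac{1-q}{1-p} < c$, so both log-arguments lie in a fixed compact interval $[1,c]$ bounded away from $0$ and $\infty$. On such an interval the elementary two-sided bound $\frac{x-1}{x} \le \log x \le x-1$, combined with $\log x \asymp x - 1$ uniformly for $x\in[1,c]$ (with constants depending only on $c$), lets me replace each $\log$ by the corresponding linear increment up to a constant factor. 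Concretely $\log\frac{p}{q} \asymp_c \frac{p-q}{q}$ and $\log\frac{1-q}{1-p} \asymp_c \frac{p-q}{1-p}$.

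Next I would substitute these into the closed form. The first term contributes $\asymp_c \frac{p(p-q)}{q}$ and the second contributes $\asymp_c \frac{(1-p)(p-q)}{1-p} = p-q$ — wait, more carefully, the second term is $(1-p)\log\frac{1-q}{1-p} \asymp_c (1-p)\cdot\frac{p-q}{1-p} = p-q$, and similarly the first term is $p\log\frac{p}{q}\asymp_c p\cdot\frac{p-q}{q} = \frac{p(p-q)}{q}$. Adding, $D_{KL}(p\|q) \asymp_c \frac{p(p-q)}{q} + (p-q) \cdot 0$... this needs the cancellation to be handled correctly: actually the standard route is cleaner. Since $D_{KL}(p\|q) = \int_q^p \frac{p-t}{t(1-t)}\,dt$ (the integral form of the KL divergence along the Bernoulli exponential family, or directly by differentiating), and on the whole interval $[q,p]$ the two hypotheses force $t(1-t) \asymp_c q(1-q)$ (because $t \in [q,p] \subseteq [q, cq]$ so $t \asymp_c q$, and $1-t \in [1-p, 1-q] \subseteq [1-p, c(1-p)]$ so $1-t \asymp_c 1-p \asymp_c 1-q$ using $1-q < c(1-p) \le c(1-q)$), I can pull $t(1-t)$ out of the integral up to constants: $D_{KL}(p\|q) \asymp_c \frac{1}{q(1-q)}\int_q^p (p-t)\,dt = \frac{(p-q)^2}{2q(1-q)}$, which gives exactly $\Theta\big(\frac{(p-q)^2}{q(1-q)}\big)$ with implied constants depending only on $c$.

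The main (and really only) obstacle is the bookkeeping to verify that $t(1-t) \asymp q(1-q)$ uniformly over $t\in[q,p]$ using precisely the stated hypotheses, and likewise that the constants in $\log x \asymp x-1$ on $[1,c]$ are harmless; both are routine once one commits to the integral representation, which is why I would prefer it over manipulating the $p\log\frac pq + (1-p)\log\frac{1-p}{1-q}$ form directly (there the leading-order terms partially cancel and one must be careful not to lose the $(p-q)^2$ scaling). I would also note for completeness that $D_{KL} \ge 0$ always, so only the upper bound and the matching lower bound on the integrand need the hypotheses; the lower bound $D_{KL}\gtrsim \frac{(p-q)^2}{q(1-q)}$ in fact holds more generally via Pinsker-type arguments, but the clean two-sided statement is exactly what the ratio conditions buy us.
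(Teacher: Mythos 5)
Your proof is correct, and it takes a genuinely different route from the paper's. The paper argues in two black-box steps: the upper bound is the standard inequality $D_{KL}\le\chi^2$ combined with the exact identity $\chi^2(\Bern(p)\|\Bern(q))=\frac{(p-q)^2}{q(1-q)}$, and the lower bound is then deduced from the bounded likelihood ratio via a ``reverse Pinsker''-type comparison of $f$-divergences (the ratio hypotheses ensure $\chi^2\lesssim_c D_{KL}$). You instead compute directly: you invoke the integral representation $D_{KL}(p\|q)=\int_q^p\frac{p-t}{t(1-t)}\,dt$ (which you correctly note follows by differentiating in $q$), observe that the two ratio hypotheses give $t\asymp_c q$ and $1-t\asymp_c 1-q$ uniformly on $[q,p]$, hence $t(1-t)\asymp_c q(1-q)$, and then pull the denominator out of the integral to get $\frac{(p-q)^2}{2q(1-q)}$ on the nose up to constants. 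Your route is more elementary and fully self-contained, whereas the paper's is shorter if one already knows the $\chi^2$-dominates-$KL$ fact and a bounded-ratio reverse inequality. You were also right to abandon your first attempt of replacing $\log x$ by $x-1$ termwise: the leading linear terms in $p\log\frac{p}{q}+(1-p)\log\frac{1-q}{1-p}$ partially cancel, and the $(p-q)^2$ scaling actually comes from the second-order behavior of $\log$, which the integral form captures automatically while the naive linearization throws away. One small bookkeeping remark: to justify $t(1-t)\asymp_c q(1-q)$ uniformly it is cleanest to note directly $q\le t\le p\le cq$ and $1-p\le 1-t\le 1-q\le c(1-p)$, so $\tfrac{1}{c}q(1-q)\le t(1-t)\le c\,q(1-q)$; you essentially say this, and it closes the argument.
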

\begin{proof}
Note that the quantity $\frac{(p-q)^2}{q(1-q)}$ is the $\chi^2$ divergence between two Bernoulli, which dominates the KL divergence. For the other side, note that on the support of these two distributions ($\{0,1\}$) their ratio of density is bounded. Thus by a reverse Pinsker's inequality the result follows. 
\end{proof}
\begin{lemma}[TV divergence between Binomials]\label{TVBIN}
Consider two parameters $p, q\in(0,1)$, then:
$$\dTV(\Bern(q)^{\otimes n}, \Bern(q)^{\otimes n})\leq \sqrt{\frac{n(p-q)^2}{2q(1-q)}}.$$
\end{lemma}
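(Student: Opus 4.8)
First, a clerical remark: the displayed inequality as typeset compares $\Bern(q)^{\otimes n}$ with itself, which is trivially $0$; the intended statement is evidently
$\dTV\big(\Bern(p)^{\otimes n},\Bern(q)^{\otimes n}\big)\le\sqrt{\tfrac{n(p-q)^2}{2q(1-q)}}$, and this is what I would prove. The plan is the standard Pinsker-plus-tensorization argument, routed through KL rather than $\chi^2$ so that the per-coordinate cost adds instead of multiplying.

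The steps, in order: (i) Apply Pinsker's inequality to the product measures, giving
$\dTV\big(\Bern(p)^{\otimes n},\Bern(q)^{\otimes n}\big)\le\sqrt{\tfrac12\,D_{KL}\big(\Bern(p)^{\otimes n}\,\|\,\Bern(q)^{\otimes n}\big)}$.
(ii) Use additivity of KL divergence over product distributions (equivalently, the chain rule with independent coordinates) to get
$D_{KL}\big(\Bern(p)^{\otimes n}\,\|\,\Bern(q)^{\otimes n}\big)=n\,D_{KL}\big(\Bern(p)\,\|\,\Bern(q)\big)$.
(iii) Bound the single-coordinate KL divergence by the $\chi^2$ divergence, $D_{KL}(P\|Q)\le\chi^2(P\|Q)$, and compute the Bernoulli $\chi^2$ divergence explicitly:
$\chi^2\big(\Bern(p)\,\|\,\Bern(q)\big)=\tfrac{(p-q)^2}{q}+\tfrac{(p-q)^2}{1-q}=\tfrac{(p-q)^2}{q(1-q)}$.
(iv) Chain the three bounds: $\dTV\le\sqrt{\tfrac12\cdot n\cdot\tfrac{(p-q)^2}{q(1-q)}}=\sqrt{\tfrac{n(p-q)^2}{2q(1-q)}}$, as claimed.

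Each ingredient is classical: Pinsker's inequality, additivity of relative entropy for product measures, and the elementary fact $\log(1+x)\le x$ that yields $D_{KL}\le\chi^2$ (applied to the density ratio of $P$ against $Q$). There is no genuine obstacle here; the only point requiring any care is the choice of divergence in step (ii)–(iii): the $\chi^2$ divergence tensorizes multiplicatively, $\chi^2(P^{\otimes n}\|Q^{\otimes n})=(1+\chi^2(P\|Q))^n-1$, which would give a weaker (exponential-in-$n$) bound, so one must pass to KL for the additive tensorization and only then invoke $D_{KL}\le\chi^2$ at the single-coordinate level. Alternatively one could cite \sref{Lemma}{Tensor} on tensorization of total variation together with the sub-additive bound $\dTV(\Bern(p),\Bern(q))=|p-q|$, but that yields $n|p-q|$, which is worse than the stated $\sqrt{n}$-type bound in the relevant small-$(p-q)$ regime, so the KL route is the one to take.
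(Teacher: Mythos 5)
Your proposal is correct and follows exactly the same route as the paper's proof: Pinsker's inequality, additivity of KL over the product, then the single-coordinate bound $D_{KL}\le\chi^2$ with the explicit Bernoulli $\chi^2$ computation. Your clerical remark about the typo (the statement should compare $\Bern(p)^{\otimes n}$ with $\Bern(q)^{\otimes n}$) is also right.
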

\begin{proof}
This comes directly from the Pinsker's inequality on TV, KL, and $\chi^2$ divergences:
\begin{align*}
    \dTV(\Bern(q)^{\otimes n}, \Bern(q)^{\otimes n})&\leq \sqrt{\frac{D_{KL}(\Bern(q)^{\otimes n},\Bern(q)^{\otimes n})}{2}} \\
    &=\sqrt{\frac{nD_{KL}(\Bern(p), \Bern(q))}{2}}
    \\&\leq \sqrt{\frac{n\chi^2(\Bern(p), \Bern(q))}{2}}=\sqrt{\frac{n(p-q)^2}{2q(1-q)}}
\end{align*} due to $2\dTV^2\leq D_{KL}\leq \chi^2$ and the factorization of $D_{KL}$ for independent distributions.\end{proof}
\section{Reductions to Detection}\label{reducs}
In this section we point out that all of the inference variants considered, detection is (almost) the weakest version of all. This can be viewed from the perspective of reductions where a blackbox for a different task implies a blackbox for detection. Such reductions were discussed in 
\cite{HWX15,BHK16,BBK,BBH18}. Here we re-formulate the necessary proofs:

\begin{lemma}[Refutation implies detection]\label{refutetodetect}
Consider two hypotheses $H_0, H_1$ and valuation function $\val$ with separation thresholds $\epsilon$ and gap $\delta$. If there is an efficient refutation blackbox $A$ with asymptotic success probability $p=\lim_{n\to\infty}p^{(n)}>0$, then (weak) detection is computationally possible.
\end{lemma}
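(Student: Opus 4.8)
The plan is to use the refutation blackbox $A$ directly as a detection test, and argue its Type I and Type II errors are bounded away from $1$. Recall the setup: the valuation $\val$ satisfies $\P_{H_0}(\val(G) < \delta - \epsilon) = 1 - o(1)$ and $\P_{H_1}(\val(G) > \delta + \epsilon) = 1 - o(1)$, and a refutation algorithm with success probability $p$ is required to output $1$ deterministically whenever $\val(G) > \delta + \epsilon$, and to output $0$ with probability at least $p$ when $G$ is drawn from $\P_{H_0}(\,\cdot \mid \val(G) < \delta - \epsilon)$. First I would define the detection test $\phi := A$, so $\phi$ rejects (outputs $1$) exactly when $A$ does.

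The Type II error analysis is immediate: under $H_1$, with probability $1 - o(1)$ we have $\val(G) > \delta + \epsilon$, and on that event $A(G) = 1$ by the definition of refutation; hence $\P_{H_1}[\phi(G) = 0] = o(1)$. For the Type I error, I would condition on the high-probability event $\mathcal{E} = \{\val(G) < \delta - \epsilon\}$ under $H_0$. On this event, $G$ is distributed as $\P_{H_0}(\,\cdot \mid \mathcal{E})$, so the refutation guarantee gives $\P[A(G) = 0 \mid \mathcal{E}] \ge p^{(n)}$. Therefore
$$\P_{H_0}[\phi(G) = 0] \ge \P_{H_0}[A(G) = 0 \mid \mathcal{E}] \cdot \P_{H_0}(\mathcal{E}) \ge p^{(n)} \cdot (1 - o(1)).$$
Combining the two bounds, the total error satisfies
$$\P_{H_0}[\phi(G) = 1] + \P_{H_1}[\phi(G) = 0] \le (1 - p^{(n)}(1 - o(1))) + o(1) = 1 - p^{(n)} + o(1),$$
which is asymptotically at most $1 - p < 1$ since $p = \lim_n p^{(n)} > 0$. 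This is strictly below $1$, which is exactly the statement that weak detection is computationally possible (a trivial test achieves total error $1$, and $\phi$ beats it by a constant margin), and $\phi$ runs in polynomial time since $A$ does.

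The only subtlety — and the one place to be careful rather than a genuine obstacle — is the conditioning step: the refutation blackbox's guarantee is stated for the \emph{conditional} null distribution $\P_{H_0}(\,\cdot \mid \val(G) < \delta - \epsilon)$, not the raw $H_0$, so one must pass through the event $\mathcal{E}$ and absorb the $\P_{H_0}(\mathcal{E}) = 1 - o(1)$ factor as above; this is where the separation assumption on $\val$ is used. Everything else is a direct unpacking of definitions, so I do not anticipate any real difficulty.
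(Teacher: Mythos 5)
Your proof is correct and follows essentially the same approach as the paper: use the refutation blackbox itself as the detection test, bound the Type II error via the high-probability event $\{\val(G) > \delta + \epsilon\}$ under $H_1$ (on which refutation outputs $1$ deterministically), and bound the Type I error by conditioning on $\{\val(G) < \delta - \epsilon\}$ under $H_0$ and invoking the refutation success probability $p^{(n)}$. Your write-up is, if anything, more carefully worded than the paper's, which contains some sign/complement typos in its displayed expressions (e.g., it writes $P(\val(G)>\epsilon+\delta\mid H_1)\to 0$ where the complementary event is meant).
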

\begin{proof}
Consider the canonical form of refutation as described in \sref{Section}{Defs} with a polynomial-timed refutation blackbox $A$ having success probability $p>0$. We show that $A$:\begin{itemize}
    \item When $G\sim H_0$, $A(G)=0$ with probability at least $p\cdot P(\val(G)<\epsilon-\delta|G\sim H_0)$, thus the Type I error is at most $1-p\cdot P(\val(G)<\epsilon-\delta|H_0)$.
    \item When $\val(G)>\epsilon+\delta$ the output is always 1, thus the Type II error is at most the probability of a low valuation $P(\val(G)>\epsilon+\delta|H_1)$.
\end{itemize}
Therefore, the sum of errors is bounded above by:
$$1-p\cdot P(\val(G)<\epsilon-\delta|H_0)+P(\val(G)>\epsilon+\delta|H_1)$$
Note that as $n\to\infty$,
$$P(\val(G)<\epsilon-\delta| H_0)\to 1, \;\;\;\;P(\val(G)>\epsilon+\delta|H_1)\to 0.$$
Therefore, the detection error of this blackbox is bounded above by $1-p^{(n)}<1$ in the limit. This implies that it returns a better-than-random detection asymptotically.
\end{proof}
\begin{lemma}[Recovery implies detection]\label{recovtodetect}
Fix two hypothesis $H_0, H_1$ with valuation function $\val_s$ that can be computed in polynomial time given key $s$ and define $\val(G)=\max_s\val_{s}(G)$. 

Assuming $\val(G\sim H_0)\leq \epsilon-\delta$ with high probability and suppose that a polynomial time oracle (on input $G$) generates a key $k$ such that $\val_k(G)> \epsilon$ with high probability over $G\sim H_1$, then detection is possible in polynomial time equipped with such key oracle.
\end{lemma}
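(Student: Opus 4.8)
The plan is to mimic the structure of the preceding lemma (Refutation implies detection), building a two-point test out of the recovery oracle. Given an instance $G$ of the detection problem $H_0$ versus $H_1$, run the polynomial-time key oracle to obtain a key $k$, compute $\val_k(G)$ in polynomial time, and output $1$ if $\val_k(G) > \epsilon - \delta$ and $0$ otherwise (one could equivalently threshold at $\epsilon - \delta/2$; any point strictly between the two concentration values works).

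**Correctness of the test.** First I would bound the Type II error: when $G \sim H_1$, by hypothesis the oracle produces with high probability a key $k$ for which $\val_k(G) > \epsilon$, and since $\epsilon > \epsilon - \delta$ the test outputs $1$ on this event, so $\P_{H_1}[\text{output} = 0] = o(1)$. Next the Type I error: when $G \sim H_0$ we have $\val(G) = \max_s \val_s(G) \le \epsilon - \delta$ with high probability; since $\val_k(G) \le \val(G)$ for the particular key $k$ the oracle returns (whatever it is — this is the point of taking the max over $s$ in the definition of $\val$), on this high-probability event $\val_k(G) \le \epsilon - \delta$ and the test outputs $0$. Hence $\P_{H_0}[\text{output} = 1] = o(1)$. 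Summing, the total error is $o(1)$, so the sum of Type I and Type II errors is bounded below $1$ asymptotically, i.e.\ detection succeeds; and the whole procedure is polynomial time since the oracle and $\val_k$ both are.

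**Main obstacle.** There isn't a deep one — the argument is essentially the same bookkeeping as in \sref{Lemma}{refutetodetect}. The one subtlety worth being careful about is that the oracle's key $k$ is itself random and possibly adversarially chosen among valid recovery outputs, so for the Type I bound one must not assume anything about $k$ beyond $\val_k(G) \le \val(G)$; this is exactly why $\val$ is defined as the supremum over all keys, and why the concentration hypothesis is stated for $\val$ rather than for any fixed $\val_s$. I would make sure the high-probability events for $H_0$ (namely $\{\val(G) \le \epsilon - \delta\}$) and for $H_1$ (the oracle succeeding, together with its key satisfying the valuation bound) are each $1 - o(1)$ and combine them via a union bound, which is immediate.
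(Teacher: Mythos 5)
Your proof is correct and takes essentially the same approach as the paper: both hinge on the observation that $\val_k(G) \le \val(G) \le \epsilon - \delta$ w.h.p.\ under $H_0$ (regardless of which key the oracle outputs) and on the oracle's high-probability success under $H_1$. The only cosmetic difference is that the paper packages $\val_k$ as a polynomial-time valuation and invokes \sref{Lemma}{refutetodetect}, whereas you unfold that lemma and build the thresholding test directly.
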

\begin{proof}
Consider the alternate valuation function $\val'=\val_k$, which can be computed in polynomial time by first asking $k(G)$ from the given oracle. From the previous lemma and the separation conditions we know that:
\begin{enumerate}
    \item For $G\sim H_0$, $\val'(G)\leq \val(G)\leq \epsilon-\delta$ with high probability.
    \item For $G\sim H_1$, $\val'(G)> \epsilon$ with high probability.
\end{enumerate}
Therefore, $\val'$ is a polynomial time valuation, which obviously implies that refutation on this blackbox can be done in polynomial time. By \sref{Lemma}{refutetodetect}, we have the desired conclusion. 
\end{proof}
\section{Different Varieties of Recovery}
\label{recovs}
Consider the notion of \emph{minimal recovery} of strength $\alpha>0$, which is outputting a guess $\hat P$ for the planted location such that $$\lim_{n\to\infty}\frac{(\log k)^{\alpha} \mathbb{E}[|\hat P\bigcap P|]}{k}\geq 1.$$ Specifically, weak recovery is just partial recovery of strength $1$ and partial recovery implies minimal recovery of strength $\alpha\to 0$. We can go on to prove that with partial recovery one can achieve precise recovery with only sub-polynomial signal boost. This means that the PDS recovery conjecture can be weakened to only assume hardness for \emph{minimal} recovery. The following lemma applies both statistically and computationally; we will use it as a crucial reduction step to \sref{Theorem}{statrefute}.
\begin{lemma}[Minimal recovery implies exact recovery]\label{partialtofull}
For the $\pd(n, k, p, q)$ recovery problem when $p, q$ are bounded by away by zero and one. If one can achieve minimal recovery with strength $\alpha>1$ on a sequence of parameters $(N_n, K_n, P_n, Q_n)$ in polynomial time where $K_n\in \tilde{\omega}(\sqrt{N_n})\bigcap o(N_n^{\gamma})$ for some exponent $\gamma\in (\frac{1}{2}, 1)$, then one can achieve exact recovery on a modified sequence of parameters $(N_n, K_n, P_n, Q'_n)$ where $Q'_n$ satisfies $$D_{KL}(P_n\|Q_n)=\Theta((\log k)^{2\alpha}D_{KL}(P_n\|Q'_n)).$$
\end{lemma}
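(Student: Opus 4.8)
The plan is to run a two-phase amplification: first boost a minimal-recovery guess of strength $\alpha>1$ into a \emph{weak}-recovery guess (strength $1$) by a voting/aggregation argument, and then boost weak recovery into exact recovery by a cleanup step, paying only a sub-polynomial factor in signal at each stage, which is absorbed into the $Q_n \to Q_n'$ modification. To make the amplification work, I would exploit the freedom afforded by the range $K_n \in \tilde\omega(\sqrt{N_n}) \cap o(N_n^\gamma)$: we may split the $N_n$ vertices into $T = \polylog(N_n)$ disjoint blocks of size $N_n / T$, each still containing (in expectation, and by concentration of the hypergeometric, w.h.p.\ up to $(1\pm o(1))$ factors) a $1/T$ fraction of the planted set; restricting $\pd(N_n, K_n, P_n, Q_n)$ to such a block yields an independent instance of $\pd(N_n/T, K_n/T, P_n, Q_n)$. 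Because the exponent $\gamma$ is strictly below $1$ and strictly above $1/2$, all these sub-instances remain in the ``dense-and-above-$\sqrt{n}$'' window, so the hypothesized minimal-recovery oracle applies to each. Running the oracle on all $T$ blocks and unioning the outputs gives an estimate whose expected overlap with $P_n$ is at least $(1-o(1)) k / (\log k)^\alpha$ — no improvement yet — but repeating the oracle call $m = \polylog$ times on \emph{independent} fresh copies of the same instance (which we can obtain, e.g., by the resampling/noising trick used to pass between fixed and binomial planted sizes, or simply by requesting fresh samples since the lemma is about a distributional recovery task) and taking a coordinatewise majority vote concentrates each coordinate's inclusion indicator, turning an $\Omega(k/(\log k)^\alpha)$ expected overlap into a $(1-o(1))k$ expected overlap at the cost of a $\polylog$ multiplicative loss in $D_{KL}$, hence the $(\log k)^{2\alpha}$ bookkeeping.

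Concretely, for the voting step I would argue as follows. A minimal-recovery oracle of strength $\alpha$ outputs $\hat P$ with $\Expect|\hat P \cap P| \ge (1-o(1)) k / (\log k)^\alpha$. Increasing the effective signal from $D_{KL}(P\|Q')$ up to $D_{KL}(P\|Q) = \Theta((\log k)^{2\alpha} D_{KL}(P\|Q'))$ increases the per-edge distinguishing advantage by a $\polylog$ factor; equivalently, one can simulate the stronger-signal instance as a noisy version and boost. The cleanest route is: run the oracle $r = \Theta((\log k)^{2\alpha})$ times on independent samples from the \emph{same} parameter family $(N_n, K_n, P_n, Q_n)$ — valid since the claimed modification only weakens $Q$ — collect the multiset of guessed supports, and let $\hat S = \{ i : i \text{ appears in at least } \tfrac{r}{2(\log k)^\alpha} \text{ of the guesses} \}$. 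A first-moment computation shows $\Expect|\hat S \setminus P| = o(k)$ (a non-planted vertex is included only by a large-deviation event) while a reverse argument using $\Expect|\hat P \cap P| \ge (1-o(1))k/(\log k)^\alpha$ per run plus independence shows $\Expect|P \setminus \hat S| = o(k)$; together these give weak recovery.

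For the final cleanup from weak recovery to exact recovery I would use the standard degree-profiling argument: given a weak-recovery estimate $\hat S$ with $|\hat S \triangle P| = o(k)$, for each vertex $v$ compute its number of neighbors inside $\hat S$; planted vertices have in-$\hat S$ degree concentrated near $(1-o(1)) k P_n$ while non-planted vertices concentrate near $(1-o(1)) k Q_n$, and since $k \cdot D_{KL}(P_n \| Q_n) \gtrsim (\log k)^{2\alpha} \cdot k D_{KL}(P_n\|Q_n') \to \infty$ in the relevant regime (this is exactly where the $\tilde\omega(\sqrt N)$ lower bound on $K$ and the density assumption enter, guaranteeing the gap $k|P_n - Q_n|$ dominates the fluctuation scale $\sqrt{k}$ after the $\polylog$ boost), a Chernoff bound plus a union bound over all $n$ vertices re-classifies every vertex correctly with probability $1-o(1)$, yielding $\hat P = P$ exactly. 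Keeping track of which signal level is needed at which step: the cleanup needs $k D_{KL}(P\|Q) = \tilde\omega(1)$, the voting loses $\Theta((\log k)^{2\alpha})$, and minimal recovery operates at $D_{KL}(P\|Q')$ — chaining these gives the stated relation $D_{KL}(P_n\|Q_n) = \Theta((\log k)^{2\alpha} D_{KL}(P_n\|Q_n'))$. The main obstacle I anticipate is making the independence used in the voting step legitimate: one must either justify access to independent repetitions of the recovery instance, or replace voting-over-repetitions by voting-over-disjoint-blocks together with a self-boosting scheme that stays inside the single given sample — the block decomposition is what the hypothesis $K_n \in o(N_n^\gamma)$, $\gamma < 1$, is designed to permit, so I would lean on that, checking carefully that each sub-block instance remains both computationally in-regime (above $\sqrt{\cdot}$) and statistically consistent with the hypergeometric-to-binomial comparison of Remark \ref{rmk:1}.
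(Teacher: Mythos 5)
Your high-level skeleton—run the oracle on several independent repetitions, tally per-node hit counts, and threshold—matches the paper's strategy, and you correctly anticipate that the crux is legitimizing the ``independent repetitions.'' But you do not resolve it, and the two fallbacks you offer both fail. Neither ``requesting fresh samples'' (you are handed a single graph, not oracle access to the distribution) nor block decomposition produces what you need: if you partition the vertices into $T$ blocks, each block contains a \emph{disjoint} $1/T$-fraction of the planted set, so the $T$ oracle outputs give information about \emph{different} planted nodes and cannot be majority-voted against one another. The paper's missing ingredient is \textsc{Graph-Clone} (Lemma 5.2 of BBH19): from a single sample of $\mathcal{M}(\eta,\Bern(P),\Bern(Q'))$ it manufactures $r$ genuinely independent samples of $\mathcal{M}(\eta,\Bern(P),\Bern(Q))$ \emph{with the same planted support} $\eta$, and the KL cost of this cloning is exactly the $(\log k)^{2\alpha}$ factor relating $Q$ to $Q'$ in the statement. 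You gesture at ``the resampling/noising trick'' but never identify it as the device that both supplies independence and accounts for the signal loss; without it the rest of your argument has no valid input.

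You also misattribute the role of the hypothesis $K_n \in o(N_n^\gamma)$, $\gamma<1$: it is not there to enable block decomposition. In the paper it enters the union bound over the $n-k$ non-planted nodes. Each non-planted node's hit count is stochastically dominated by $\Binom(r, 2k/n)$ with $r=(\log k)^{2\alpha}$, and one bounds
$\P\bigl(\Binom(r, 2k/n)>C\bigr)\lesssim (\log n)^{2(C+1)\alpha} n^{-C(1-\gamma)}$;
choosing the constant $C$ with $C(1-\gamma)>1$ makes this $o(n^{-1})$, so the union bound over all non-planted nodes succeeds. If $\gamma$ were allowed to be $1$ this argument would break. Finally, the paper goes directly from the hit-count histogram to exact recovery by showing the planted and non-planted hit-count distributions separate above/below a constant $C$ with high probability simultaneously for all nodes; your extra weak-to-exact degree-profiling cleanup is not needed once the histogram separation is established, and more to the point would again require analyzing concentration on the \emph{single} remaining sample at the weaker signal level $Q'$, which you have not set up.
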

\begin{proof}
The critical component here lies in a subroutine \textsc{Graph-Clone} (Lemma 5.2 in \cite{BBH19}) in which we generate independent graph instances conditioned on planted instance locations. The lemma can be read off as the following form:\begin{itemize}
    \item Suppose we have a hidden planted location $\eta$, and a one-time sampler from the planted distribution $\mathcal{M}_{[n]\times [n]}(\eta\times \eta, \Bern(p), \Bern(q))$, then we have a one-time sampler from the tensor product $\mathcal{M}_{[n]\times [n]}^{\otimes 2}(\eta\times \eta, \Bern(p), \Bern(Q))$ where $Q=1-\sqrt{(1-p)(1-q)}$. Specifically, the divergence measure $\chi^2(p, Q)>\frac{1}{2}\chi^2(p, q)$.
    \item[Corollary:] Suppose we have a hidden planted location $\eta$ and as above a one-time sampler, then we can have a sample generated from $\mathcal{M}_{[n]\times [n]}^{\otimes 2}(\eta\times \eta, \Bern(p), \Bern(Q))$ where $\chi^2(p, Q)>\frac{1}{2t}\chi^2(p, q)$.
\end{itemize}
Note that in the case when $p, q$ are bounded away from zero and one, $D_{KL}(p\|q)\sim \chi^2(p, q)\sim (p-q)^2$ are of the same order. Therefore, with the cost of reducing $(\log k)^{2\alpha}$ in the distance we can generate one instance from $\mathcal{M}^{\otimes (\log k)^{2\alpha}}_{[N]\times [N]}(\eta\times \eta, \Bern(P), \Bern(Q))$ from a single instance of the original $\mathcal{M}_{[N]\times [N]}(\eta\times \eta, \Bern(P), \Bern(Q'))$.

\spac Our assumption also states that we have a black box to perform minimal recovery of strength $\alpha$ on $\mathcal{M}_{[N]\times [N]}(\eta\times \eta, \Bern(P), \Bern(Q))$, and we arrive at $\log^{2\alpha} k$ estimates of the planted $\eta$, denoted by $\hat\eta_1,\dots, \hat\eta_{\log^{2\alpha}k}:=\hat\eta_{r}$. Moreover, given that the cloned copies are \emph{independently generated} conditioned on $\eta$ and hence so are those $\hat\eta_i$'s, we wish to reconstruct $\eta$ through those independent estimate $\hat\eta_i$'s.

\spac Note that it is safe to assume that any black-box takes in the input unlabeled, because we can apply a hidden permutation $\pi$ to the graph and feed it to the black-box instead, we know that for each index $i\in \eta$, the probabilty that it lies in a $\hat \eta$ is exactly $E(\hat \eta\cdot \eta)/k\sim \log^{\alpha} k$ where expectation ranges over $\mathcal{M}_{[N]\times [N]}(\eta\times \eta, \Bern(P), \Bern(Q))$. For each node not in $\eta$, the probability of $\hat \eta$ hitting it is at most $\frac{k}{n-k}\leq \frac{2k}{n}$. Therefore, if we compile a histogram of $\hat \eta_i$ hits, we have $k$ copies of $\Binom(r, \sqrt{r^{-1}})$ and $n-k$ copies of (at most) $\Binom(r, \frac{2k}{n})$.

\spac We now only need to show that when $r\in\Theta((\log k)^{2\alpha})$, the two distributions (smallest from $\eta$ and largest from $\overline \eta$) separates with probability $\to 1$. Note that the probability that $\Binom(r, 2k/n)$ is at least constant $C$ is:
$$\P(\Binom(r, 2k/n)>C)\leq r\cdot r^C\cdot (\frac{2k}{n})^C\lesssim (\log n)^{2(C+1)\alpha}n^{-C(1-\gamma)}$$
Pick any $C(1-\gamma)>1$, then the above probability goes to $\tilde o(n^{-1})$, and a union bound over all vertices in $\overline{\eta}$ says that with probability $1-o_n(1)$ all counts in that group is bounded by constant $C$.

\spac Now we consider the group of nodes in $\eta$. The probability of one being bouneded by $C$ is 
$$\P(\Binom(r, \sqrt{r^{-1}})<C)\leq C{r\choose C}(1-\sqrt{r^{-1}})^{r-C}\leq Cr^C\exp\left(-(r-C)\sqrt{r^{-1}}\right)$$because $1-x<e^{-x}$, and the union bound says that the minimum for counts in $\eta$ is at most:
$$k\P(\Binom(r, \sqrt{r^{-1}})<C)\lesssim kr^{C}\exp\left(-\frac{1}{2}\sqrt{r}\right)=\exp\left(\log k-\frac{(\log k)^{\alpha}}{2}+O(\log r)\right) $$
assuming that $\alpha>1$, the above goes to $0$ as $k\to\infty$.

\spac Therefore, if we sum over statistics for $\hat \eta_i$'s we get that the entries in $\eta$ goes above any constant with probability $\to 1$ whereas with high probability the other entries are bounded by a constant, and hence precise recovery is achievable via the most popular nodes. Moreover, this entire procedure applies in polynomial time.
\end{proof}
We further comment that the case of when $p, q$ are not bounded away by one (``dense") are similar, but require some further bounds on the exponents. Here we only present proof of this dense regime. Moreover, the condition that $k<n^{\gamma}$ instead of $k\in o(n)$ is also only needed for minimal recovery (not required for a reduction from partial to exact), but for our purposes this is a fine assumption to make. The condition $\alpha>1$ is to some extent unnecessary either because recovery of $\alpha$ implies recovery of $\alpha^{+}$ for any $\alpha^{+}>\alpha$ ($\alpha>1$ is only needed for convenience in expressing signal decay).

\spac Moreover, note that the non-homogenity of planted instance in \sref{Theorem}{PARTRecov} means that the direct reduction in \sref{Lemma}{partialtofull} does not apply. In fact, it will be easy to see that partial recovery for our instance in \sref{Theorem}{PARTRecov} can be implied by PDS recovery.

\spac For completeness of arguments (which will be useful in statistical bound for refutation), we also present a statistical condition for (exact) PDS recovery in below.
\begin{theorem}[PDS recovery -- Theorem 2 in \cite{HWX16b}]\label{PDSstatrec}Again consider the settings (and parameters correspondence) as above, then exact recovery is statistically possible if:
$$\frac{k D_{KL}(p\| q)}{\log n}> C$$and impossible if
$$\frac{k D_{KL}(p\| q)}{\log n}< c$$for some absolute constants $c, C$.
\end{theorem}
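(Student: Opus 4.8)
Since the statement is quoted as Theorem~2 of \cite{HWX16b}, I would reproduce the two matching classical arguments. Write $d:=D_{KL}(p\|q)$ and let $S$ denote the planted $k$-set under $H_1$, and let $A_{ij}\in\{0,1\}$ be the indicator of edge $\{i,j\}$. For the \emph{achievability} side ($kd>C\log n$) the plan is to analyze the maximum-likelihood estimator: setting $L_{ij}:=\log\frac{\Bern(p)(A_{ij})}{\Bern(q)(A_{ij})}$, the MLE (which coincides with the MAP since the planted set is uniform) outputs the $k$-set $\hat S$ maximizing $\mathrm{score}(T):=\sum_{\{i,j\}\subseteq T}L_{ij}$, and I must show $\P_{H_1}(\hat S\neq S)=o(1)$. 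For the \emph{impossibility} side ($kd<c\log n$) the plan is a genie argument exhibiting a strictly easier vertex-selection problem that nonetheless fails with probability $1-o(1)$.

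\textbf{Achievability.} By symmetry I would union bound over wrong sets: $\P_{H_1}(\hat S\neq S)\le\sum_{\ell=1}^{k}N_\ell\,p_\ell$, where $N_\ell=\binom k\ell\binom{n-k}\ell\le n^{2\ell}$ counts the $k$-sets $T$ with $|S\setminus T|=\ell$ and $p_\ell:=\P_{H_1}(\mathrm{score}(T)\ge\mathrm{score}(S))$ depends only on $\ell$. For $T=(S\setminus A)\cup B$ with $|A|=|B|=\ell$, the difference $\mathrm{score}(T)-\mathrm{score}(S)=\sum_{e\in E_+}L_e-\sum_{e\in E_-}L_e$ is a sum of $2N'_\ell$ independent terms, $N'_\ell=\binom\ell2+\ell(k-\ell)=\Theta(\ell k)$, where the $E_-$ edges (dropped from $S$'s score) carry $A_e\sim\Bern(p)$ with mean $+d$ each and the $E_+$ edges (added to $T$'s score) carry $A_e\sim\Bern(q)$ with mean $-D_{KL}(q\|p)$ each, so the mean of the difference is $-N'_\ell(d+D_{KL}(q\|p))=-\Omega(\ell k d)$. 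A Chernoff bound tilted at $1/2$ gives $p_\ell\le\exp(-2N'_\ell B(p,q))$ with Bhattacharyya distance $B(p,q)=-\log(\sqrt{pq}+\sqrt{(1-p)(1-q)})=\Theta((p-q)^2)=\Theta(d)$ in the regime where $p,q$ are bounded away from $0$ and $1$. Hence $\P_{H_1}(\hat S\neq S)\le\sum_{\ell\ge1}\exp(2\ell\log n-\Omega(\ell kd))=\sum_{\ell\ge1}\exp(-\Omega(\ell\log n))=o(1)$ once the absolute constant $C$ is large enough.

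\textbf{Impossibility.} I would hand the estimator a genie: a uniformly random $u\in S$ together with the set $S\setminus\{u\}$, and ask it to output $u$; any exact-recovery algorithm induces such a genie algorithm (output $\hat S(G)\setminus(S\setminus\{u\})$), so it suffices to show the genie task fails with probability $1-o(1)$. Conditioned on $S\setminus\{u\}$, the only edges whose law depends on which of the $n-k+1$ candidates in $\{u\}\cup([n]\setminus S)$ equals $u$ are the edges from a candidate $v$ to $S\setminus\{u\}$; this count is $X_u\sim\Bin(k-1,p)$ for $v=u$ and an independent $X_v\sim\Bin(k-1,q)$ for each of the $n-k$ decoys. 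The likelihood ratio is monotone in the count, so the Bayes rule picks the largest count and fails with probability at least $\tfrac12\,\P(\exists\text{ decoy }v:X_v\ge X_u)$. On the high-probability event $\{X_u\le(k-1)p+\Delta\}$, where $\Delta$ is chosen with $\sqrt k\ll\Delta\ll k(p-q)$ (possible since $kd\to\infty$), a standard binomial large-deviation lower bound gives $\P(X_v\ge X_u\mid X_u)\ge\exp(-(k-1)d(1+o(1)))\ge n^{-c'}$ for any fixed $c'>c$ once $kd<c\log n$. Since the decoy counts are mutually independent given $X_u$, $\P(\text{no decoy beats }X_u\mid X_u)\le(1-n^{-c'})^{n-k}\le\exp(-n^{1-c'}(1-o(1)))\to0$ provided $c<1$, which completes the converse.

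\textbf{Expected obstacle.} The routine but delicate step is the converse's large-deviation estimate: one must pin the exponential rate of $\P(\Bin(k-1,q)\gtrsim(k-1)p)$ to exactly $D_{KL}(p\|q)$ while absorbing the $O(\sqrt k)$ fluctuations of $X_u$ (whose effect on the exponent, of order $\sqrt k\,(p-q)=\Theta(\sqrt{kd})=o(kd)$, is negligible). This is precisely where I would invoke that $p,q$ are bounded away from $0$ and $1$ — equivalently, the comparability of $(p-q)^2$, $D_{KL}(p\|q)$, $D_{KL}(q\|p)$ and $B(p,q)$ — which is the regime relevant to all of our applications; the fully general form of Theorem~2 in \cite{HWX16b} follows the same two-step template with the plain KL in the converse and the Chernoff/Bhattacharyya rate in the achievability bound, and since only some absolute constants $c,C$ are claimed, the crude Chernoff estimates above already suffice.
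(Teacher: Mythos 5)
The paper does not prove this statement — it is imported verbatim as Theorem~2 of \cite{HWX16b} and used as a black box — so the only meaningful comparison is with that reference, and your reconstruction follows exactly its standard template: MLE/union bound with a Chernoff–Bhattacharyya exponent for achievability, and a genie-aided local test among $n-k$ decoy vertices with a binomial large-deviation lower bound for the converse. Your sketch is sound in the regime the paper actually uses ($p,q$ bounded away from $0$ and $1$, $k=\tilde\Theta(n^\gamma)$ with $\gamma<1$, so $B(p,q)\asymp D_{KL}(p\|q)\asymp(p-q)^2$ and $n-k=\Omega(n)$), with only the minor caveat that in the converse one should dispose of the case $kd=O(1)$ separately (there recovery fails a fortiori) before choosing $\Delta$ with $\sqrt k\ll\Delta\ll k(p-q)$.
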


\subsection{Evidence of recovery hardness}
 We further remark that with some \emph{relaxed} condition on the signal, one can prove tight recovery hardness for some models on $\DkS$ (Densest $k$-Subgraph) valuation that resembles PDS in structure.
\begin{theorem}\label{PARTRecov}
For any $k_n\in \omega(\sqrt{n}), p_n\in (0, 1)$, there exists a symmetric edge density matrix on subgraphs $D_n\in \mathbb{R}^{k_n\times k_n}$, such that the row (and column) sums of $D$ are uniformly $k_n\cdot\lambda_n$ and the graph constructed by:
\begin{enumerate}
    \item On $G\sim G(n, p_n)$, randomly select a subset $S$ of vertices of size $k_n$. Choose a random bijection $\pi$ from $S$ to $[k_n].$
    \item For the nodes $u, v \in S$, resample $uv$ with probability $D_{\pi(u)\pi(v)}+p$.
\end{enumerate}
And if $\lim\sup_{n\to\infty}\frac{k_n^2}{n}\frac{\lambda_n^2}{p_n(1-p_n)}\in \tilde{o}(1)$, no (randomized) polynomial algorithm can achieve exact recovery on the planted instance, even given the knowledge of $D$, assuming the planted clique conjecture with $p=1/2$.
\end{theorem}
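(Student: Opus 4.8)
\textbf{Proof plan for Theorem~\ref{PARTRecov}.}

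The plan is to exhibit the density matrix $D_n$ as (essentially) a scaled copy of one of the regular design matrices constructed in \sref{Lemma}{RegularMatrix}, and then run the reduction machinery of \sref{Theorem}{PDSbound}/\sref{Theorem}{PDSreduction} in reverse: an exact recovery oracle for the planted instance built from $D_n$ becomes, after composing with the reduction map, a detector for $\PCR$ (here with $p=1/2$), contradicting the planted clique conjecture in the regime $k=\omega(\sqrt n)$ with $\frac{k^2}{n}\frac{\lambda^2}{p(1-p)}=\tilde o(1)$. Concretely, first I would fix the partition $E$ of $[n/k]$ (or the appropriate vertex set) into $k$ blocks and view a $\PCR$ instance as a collection of planted-bit problems, one per $(n/k)\times(n/k)$ block, exactly as in \sref{Section}{thm1}. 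Applying \textsc{Bern-Rotations} (\sref{Lemma}{lem:bern-rotations}) with the regular design matrix $R=R_{m,1/r}$ of \sref{Lemma}{RegularMatrix} as the rotation, each unknown planted bit is mapped to an unknown \emph{row} of $R$, i.e.\ to a vector with $\frac{r-1}{r}$ fraction of entries $-\gamma$ and $1/r$ fraction of entries $(r-1)\gamma$. Taking the per-column rank-$1$ (Kronecker) product, as in \sref{Lemma}{RegularBig}, and thresholding back to Bernoulli, the planted structure becomes a $k\times k$ submatrix whose entry-pattern is governed by $R_{\pi(u),\pi(v)}$ for a bijection $\pi$ — this is precisely the generative process in the statement, with $D_n$ the (recentered, rescaled) Kronecker design and $\lambda_n$ the resulting per-row signal, which by the regularity of $R$ has constant row/column sums $k_n\lambda_n$.

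The key steps, in order, are: (i) specify $D_n$ and identify $\lambda_n$ so that the SNR $\frac{k^2\lambda^2}{n\,p(1-p)}$ of the output matches the KL-budget $\ell\cdot\mu^2\lambda^{-2}$ available from the source (via \sref{Lemma}{lem:bern-rotations} and \sref{Lemma}{KLAPPROX}), verifying that the $\tilde o(1)$ assumption on the output SNR corresponds to being at or below the $\sqrt n$ clique threshold in the source — this is the quantitative heart and mirrors the bookkeeping already done for \sref{Theorem}{PDSbound}; (ii) bound the total-variation error of the composite reduction by $\tilde o(1)$ using \sref{Lemma}{TVACC} over the rejection-kernel step, the rotation step (error $O(\ell R^{-3})$ per block, union-bounded over $r^2$ blocks), and the thresholding step, choosing $R$ polynomially large; (iii) invoke \sref{Lemma}{recovtodetect} (recovery implies detection) with $\val_s$ the density of the $k$-subgraph indexed by the candidate support $s$: under $H_0=G(n,p)$ no $k$-subgraph has density exceeding $p+\epsilon$ w.h.p.\ (a Chernoff + union bound, using $k=\omega(\sqrt n)$), while an exact-recovery oracle on $H_1$ produces the planted $S$ whose induced density is $\approx p+\lambda$, so detection follows; (iv) conclude that the existence of the recovery oracle — even \emph{given} $D_n$, since $D_n$ is a fixed public object in the construction — contradicts \sref{Conjecture}{pcconjecture} at $p=1/2$ (after the usual reduction from $\PCR$ back to plain $\pc$, or directly under \sref{Conjecture}{conj: secret leak}).

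The main obstacle I expect is step (i)–(ii) done \emph{simultaneously and tightly}: one must choose the output dimension/sparsity parameter $r$ for the design matrix so that \emph{both} the spectral bound $\|R\|_{op}\le C_\alpha$ holds (requiring $r<m^{1-\alpha}$ in \sref{Lemma}{RegularMatrix}) \emph{and} the resulting $(k_n,\lambda_n)$ sweep out the entire claimed regime $k=\omega(\sqrt n)$, $\frac{k^2\lambda^2}{n p(1-p)}=\tilde o(1)$, without the per-block error $O(\ell R^{-3})$ blowing up when summed over $\Theta(r^2)$ blocks and without the rejection-kernel constraint $\mu\lesssim \delta/\sqrt{\log R}$ (\sref{Lemma}{lem:5c}) forcing a loss of more than polylog factors in the SNR. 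This is exactly the place where the secret-leakage structure is used — the one-planted-bit-per-block property is what keeps the planted size \emph{exactly} $k_n$ rather than binomial, so that $D_n$ is a genuine fixed matrix — and checking that the parameter windows overlap for every target $(k_n,p_n,\lambda_n)$ is the delicate computation. The remaining ingredient, the deterministic statement ``$G(n,p)$ has no dense $k$-subgraph'' needed for \sref{Lemma}{recovtodetect}, is routine given $k=\omega(\sqrt n)$ and is subsumed by \sref{Theorem}{statrefute}.
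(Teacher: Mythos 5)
Your proposal takes a genuinely different and much more elaborate route than the paper, but it has gaps that would prevent it from proving the stated theorem.

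The paper's proof is a short, direct construction: take a planted clique instance $\pd(n,k_0,1,1/2)$, append $(t-1)k_0$ auxiliary ``helper'' vertices that are internally connected with probability $\tfrac{t}{2(t-1)}$ and connected to everything else at density $1/2$, and let the planted set be the original clique together with the helpers (total size $k_n=tk_0$). The resulting density-increase matrix $D_n$ is block-constant with two blocks (one at $\tfrac{1}{2(t-1)}$ and one at $\tfrac12$), has the required uniform row sums $k_n\lambda_n=k_0/2$, and lies in the claimed regime since $k_n^2\lambda_n^2/n=k_0^2/(4n)\to 0$ exactly when $k_0=o(\sqrt n)$. Exact recovery of the full planted set would in particular require recovering the original $k_0$ clique vertices, which is hard by the standard PC conjecture (via \sref{Lemma}{recovtodetect}), \emph{even given $D_n$} because knowing $D_n$ is no more than knowing which vertices are the helpers. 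That is the entire argument.

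Your proposal has several concrete problems. First, the output of the $\pds$ reduction (via the Kronecker design $K_m^{1/r}$ and thresholding) is a \emph{uniform} elevated block $\pd(n,k,P_1,P_2)$, not a $k\times k$ block whose pattern is $R_{\pi(u),\pi(v)}$; the statement ``the planted structure becomes a $k\times k$ submatrix whose entry-pattern is governed by $R_{\pi(u),\pi(v)}$'' misdescribes what the rotation produces (and note that a heterogeneous $D$ built directly from the centered $R$ would have row sums zero, giving $\lambda_n=0$). Second, there is a structural mismatch with the theorem: the theorem's alternative has outside-block density equal to the null $p$, whereas the $\pds$ output's alternative has outside-block density $P_2<p_0$ with the null at $p_0$; these are different testing problems. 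Third, and most decisively, the $\pds$ reduction (\sref{Theorem}{PDSbound}) yields hardness only for $\beta<\tfrac12+\tfrac23\alpha$, strictly inside the region $\beta<\tfrac12+\alpha$ claimed here (which is exactly the SNR-$=\tilde o(1)$ boundary), so your approach cannot cover the full claimed regime even in principle. Finally, your approach would require the stronger secret-leakage $\PCR$ assumption, whereas the theorem is stated under the plain planted clique conjecture with $p=1/2$ and the paper's proof only uses that. The moral of the paper's proof is that you can avoid all of the rotation/design-matrix machinery for this particular theorem: since $D$ is allowed to be of your choosing, you can embed an \emph{unmodified} planted-clique recovery problem inside the planted set and inherit hardness verbatim.
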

It is not hard to check that for this general model (where recovery is at least as hard as PDS), one can still find a log-optimal algorithmic matching upper bound. Specifically, consider simply taking the $k$ most popular nodes (those with the highest degrees), then as long as the ratio $\frac{k^2\lambda^2}{np(1-p)\log n}\to\infty$, the output satisfies \textit{exact recovery} criteria. This result also suggest that, if PDS recovery is indeed easier than conjectured, the algorithm must use heavily the community structure.

\begin{proof}
We start from the fact that, \textit{recovery for \textup{PC} is hard at the regime when detection is hard}, which is a direct implication of the PC conjecture and \sref{Lemma}{recovtodetect}. Consider the following procedure applied on a graph $G\sim \pd(n, k, 1, 1/2)$ to obtain $G'$:\begin{enumerate}
    \item Add $(t-1)k$ vertices to $G$ that will be part of the (new) planted structure where $t>1$ is a specified parameter such that the total planted size is $tk$. 
    \item For the $(t-1)k$ extra vertices, connect each pair with probability $\frac{t}{2(t-1)}$. Connect each edge between the original $n$ vertices to the new $(t-1)k$ vertices with probability $1/2$.
    \item Permute the nodes in $G'$ randomly.
\end{enumerate}
Under this reduction, consider the new planted density matrix in $\mathbb{R}^{tk\times tk}$ where a $\mathbb{R}^{(t-1)k\times (t-1)k}$ principal submatrix is $\frac{1}{2(t-1)} J_{(t-1)k}$ and the other $k\times k$ principal submatrix has all entries $1/2$. The recovery hardness comes from the fact that even if the blackbox knows the exact location where our planted $(t-1)k$ nodes are, it can still not precisely recover the original $k$ vertices in the planted clique instance with high probability, thus strong recovery is impossible. 

\spac Note that this satisfies the row-column sum constraint where the expected degree of each planted node is exactly $\frac{n+(t+1)k}{2}$ and the planted structure lifted each node's degree by $k/2$. Consider the distribution of the degrees for a node not in planted structure (which is $d\sim \Binom(n+(t-1)k, 1/2)$) and the node inside planted structure which is either $d\sim k+\Binom(n+(t-2)k, 1/2)$ or $d\sim \Binom(n, 1/2)+\Binom((t-1)k, 2t/(t-1))$ depending on which part of the planted set. The separation of the first distribution (null) with the later two (latent) follows immediately by a very simple Chernoff Bound when $k\in\tilde \omega(\sqrt{n})$.
\end{proof}

\section{Proofs for Design Matrices}
\subsection{Proof of \sref{Lemma}{RegularMatrix}}
\begin{proof}
Note that the adjacency matrix of the sampled directed graph $A$ is not symmetric. However, we do know that the operator norm equals to the largest singular value of $$(A-\frac{d}{n}\one\one^T)(A^T-\frac{d}{n}\one\one^T)=AA^T-\frac{d^2}{n}\one\one^T$$where $d=n/r$ and $\one\in\mathbb{R}^{n\times 1}$ is the all-one vector.

\spac
We know that the largest eigenvalue of $AA^T$ is $d^2/n$ corresponding to the all one vector because it is a scaled doubly stochastic matrix. Therefore, from the Courant-Fischer Theorem we can show that the second largest singular value of $A$ which is the second largest eigenvalue of $AA^T$ is the largest eigenvalue of $AA^T-\frac{d^2}{n}\one\one^T$, which is the largest singular value of $A-\frac{d}{n}\one\one^T$.

\spac
Note that Theorem.B of  \cite{RegGraphSpectral} asserts that under the conditions in the lemma, the said quantity is bounded by $C\sqrt{d}$ with probability $1-o_n(1)$. Therefore, our constructed $R$, which is exactly $\sqrt{\frac{r}{n}}(A-\mathbb{A})=\sqrt{d^{-1}}(A-\frac{d}{n}\one\one^T)$ has max singular value at most $C$ with probability $1-o_n(1)$.
\end{proof}
\subsection{Proof of \sref{Lemma}{RegularBig}}
Note that we can sample from directed regular graphs efficiently by the ergodicity of a simple edge-flipping Markov process (\cite{MixReg1,MixReg2}), and hence we have the following.
\begin{proof}
Take $R$ from the previous lemma.
Note that the top singular value of a matrix is in fact sub-additive, and the Kronecker product is a linear operator that preserves the product of the operator norm of a matrix. We have:
    $$\sigma(\mu^{-1}\sqrt{\frac{n}{r}}K)\leq \sigma(R\otimes R)+\frac{2}{\sqrt{nr}}\sigma(R\otimes J)\leq C^2+\frac{2C}{\sqrt{nr}}\sigma(J)=C^2+2C\sqrt{\frac{{n}}{r}}$$because the top eigenvalue of $J=\one\one^T$ is exactly $n$.
    Therefore $\sigma(K)\leq 1$ for any $R$ satisfying the criteria of \sref{Lemma}{RegularMatrix}.
    
    \spac Finally, note Theorem 1 in \cite{MixReg1} states that the switch Markov Chain, on which the unique stationary distribution is the uniform distribution over all directed $d-$regular graphs, is fast-mixing, and \sref{Lemma}{RegularMatrix} still holds if the sampling condition is approximate in $L1$. Therefore, in polynomial time we can find one candidate $K$ satisfying our lemma above. Moreover, note that if we sample $O(n)$ times independently, the probability of failure becomes exponentially small.
\end{proof}
\section{Proofs for Recovery}\label{nots}
As a starter to detection problems in $\pds$, we present a simple $\pds_D$ upper bound by a degree 2 polynomial test extending Proposition B.4 in \cite{SchrammWein}.
\begin{proposition}[Upper bound on $\pds_D$] \label{UpPDS}
Consider the degree corrected $\pds$ model with planted subgraph size $k$ and average edge probability $0<q<p<\frac{2}{3}$, then as long as the product ratio $\frac{k^3}{n^{1.5}}\cdot\frac{(p-q)^2}{q(1-q)}\in\omega_n(1)$, one can computationally efficiently resolve hypothesis testing for $\pds$.
\end{proposition}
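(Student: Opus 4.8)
The plan is a Chebyshev (second-moment) test built on the empirical variance of the degree sequence, the Bernoulli analogue of Proposition~B.4 of \cite{SchrammWein}. For a graph $G$ with degrees $d_1,\dots,d_n$ and $\bar d := \tfrac1n\sum_i d_i$, set
$$V(G) := \sum_{i=1}^n (d_i-\bar d)^2 = \sum_i d_i^2 - \tfrac1n\Big(\sum_i d_i\Big)^2 .$$
Since $d_i^2 = \sum_{j\ne i}\sum_{\ell\ne i}A_{ij}A_{i\ell}$ (with $A_{ij}^2=A_{ij}$) and $\sum_i d_i = 2|E|$, $V$ is a degree-$2$ polynomial in the edge indicators, computable in $O(n^2)$ time. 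The test rejects $H_0$ iff $V(G)\ge \tfrac12(\mathbb E_{H_0}[V]+\mathbb E_{H_1}[V])$, a threshold depending only on the known parameters; by Chebyshev its Type~I$+$II error is $O\!\big((\var_{H_0}V+\var_{H_1}V)/(\mathbb E_{H_1}V-\mathbb E_{H_0}V)^2\big)$, so it suffices to show that the mean gap dominates the null and planted standard deviations. Throughout we take $q(1-q)=\Theta(1)$, as holds in the regime of \sref{Theorem}{PDSbound} where $p_0$ is a constant; note $p<2/3$ forces $q<2/3$.

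\emph{First moments.} Under $H_0=G(n,p_0)$ the $d_i$ are marginally $\Bin(n-1,p_0)$ with $\Cov(d_i,d_j)=p_0(1-p_0)$ from the shared edge, so $\mathbb E_{H_0}[V]=(n-1)(n-2)p_0(1-p_0)=(1+o(1))n^2q(1-q)$, using $p_0-q=\tfrac{k^2}{n^2}(p-q)=o(1)$; more precisely, $\mathbb E_{H_0}[V]$ exceeds its value at $p_0=q$ by $(1+o(1))k^2(p-q)(1-2q)$, the term the mean-correction is tuned against. Under $H_1$ we condition on the planted set $S$ with $|S|=k$: a vertex outside $S$ has all $n-1$ incident edges $\Bern(q)$, a vertex inside $S$ has $k-1$ edges $\Bern(p)$ and $n-k$ edges $\Bern(q)$, and --- because $|S|=k$ is fixed --- the conditional mean $\mathbb E[V\mid S]$ depends only on $k$ and $n$. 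A within-group/between-group decomposition of the degree variance, with $\mu_{\mathrm{in}}-\mu_{\mathrm{out}}=(k-1)(p-q)$ and $\sigma^2_{\mathrm{in}}-\sigma^2_{\mathrm{out}}=(k-1)(p-q)(1-p-q)$, produces a between-group contribution $\tfrac{k(n-k)}{n}(\mu_{\mathrm{in}}-\mu_{\mathrm{out}})^2=(1+o(1))k^3(p-q)^2$ and a within-group contribution whose excess over $n^2q(1-q)$ is $(1+o(1))k^2(p-q)(1-p-q)$. Subtracting $\mathbb E_{H_0}[V]$ and using $(1-p-q)-(1-2q)=-(p-q)$, the two $\Theta(k^2(p-q))$ terms cancel up to an $O(k^2(p-q)^2)$ remainder, leaving the clean gap
$$\mathbb E_{H_1}[V]-\mathbb E_{H_0}[V]=(1+o(1))\,k^3(p-q)^2 .$$
(The bound $p<2/3$ is used only to keep this and the variance error terms of strictly lower order.)

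\emph{Variances.} Write $d_i=(n-1)p_0+S_i$ with $S_i:=\sum_{j\ne i}(A_{ij}-p_0)$, so $V=\sum_i S_i^2-n\bar S^2$, $\bar S:=\tfrac1n\sum_i S_i$. Under $H_0$ each $S_i$ has variance $\tau^2:=(n-1)p_0(1-p_0)=\Theta(n)$ and fourth cumulant $O(n)=o(\tau^4)$, so $\var(S_i^2)=(2+o(1))\tau^4$; since distinct $S_i,S_j$ share only the summand $A_{ij}-p_0$, one finds $\Cov(S_i^2,S_j^2)=\var\!\big((A_{ij}-p_0)^2\big)=O(1)$, whence $\sum_{i\ne j}\Cov(S_i^2,S_j^2)=O(n^2)=o(n\tau^4)$; and $n\bar S^2$ has variance $O(n^2)=o(n\tau^4)$. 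Hence $\var_{H_0}(V)=\Theta(n\tau^4)=\Theta(n^3q^2(1-q)^2)$, i.e.\ $\mathrm{sd}_{H_0}(V)=\Theta(n^{3/2}q(1-q))$. Under $H_1$ the same bookkeeping applies conditionally on $S$ ($\var(V\mid S)$ again depends only on $k,n$, so $\var_S(\mathbb E[V\mid S])$ is negligible); the only new contribution comes from the $k$ planted vertices, whose degrees have mean shifted by $\Theta(k(p-q))$ and variance $\Theta(nq(1-q))$, adding $\Theta\!\big(k^3(p-q)^2\cdot nq(1-q)\big)$ to $\var(\sum_i S_i^2)$. Under the hypothesis $k^3(p-q)^2\gg n^{3/2}q(1-q)$, hence a fortiori $\gg nq(1-q)$, this is $o(n^3q^2(1-q)^2)$, so $\var_{H_1}(V)=\Theta(n^3q^2(1-q)^2)$ as well.

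\emph{Conclusion.} Putting the pieces together, the Type~I$+$II error is
$$O\!\left(\frac{n^3q^2(1-q)^2}{\big(k^3(p-q)^2\big)^2}\right)=O\!\left(\left(\frac{n^{3/2}q(1-q)}{k^3(p-q)^2}\right)^{2}\right)=o(1)$$
exactly when $\tfrac{k^3(p-q)^2}{n^{3/2}q(1-q)}\to\infty$, which is the hypothesis of the proposition. The main obstacle is the first-moment step: one must expand $\mathbb E[V\mid S]$ and $\mathbb E_{H_0}[V]$ to enough precision to see the exact cancellation of the $\Theta(k^2(p-q))$ contributions created by the mean-correction $p_0=q+\tfrac{k^2}{n^2}(p-q)$ --- it is this cancellation that promotes the signal from order $k^2(p-q)$ up to order $k^3(p-q)^2$, producing the matching exponent. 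The variance estimates are routine once one verifies (using $nq(1-q)=\omega(1)$ and $p<2/3$) that edge-sharing correlations, the Bernoulli fourth cumulant, and the planted-vertex fluctuations are all of strictly lower order than the null fluctuations $\Theta(n^3q^2(1-q)^2)$.
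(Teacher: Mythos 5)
Your proposal takes essentially the same second-moment route as the paper's proof: a degree-$2$ polynomial test built from the degree sequence, with mean gap $\Theta(k^3(p-q)^2)$ and null/planted standard deviation $\Theta(n^{3/2}q(1-q))$, giving exactly the threshold in the statement. The main expository difference is that you use the sample-centered statistic $V=\sum_i(d_i-\bar d)^2$ while the paper writes $f=\sum_i d_i^2$; note, however, that the paper's variance formula $\mathbb{E}[(x-np_0)^4]-(\mathbb{E}[(x-np_0)^2])^2=np_0(1-p_0)(1+(2n-6)p_0(1-p_0))$ is really the variance of the \emph{centered} square $(d_i-np_0)^2$, so the two proofs are in fact bounding the same quantity. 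Your version is the more careful one on two points: (i) you explicitly observe that after centering, $\Cov(S_i^2,S_j^2)=\var((A_{ij}-p_0)^2)=O(1)$, so the edge-sharing correlations between degrees are subleading — the paper writes ``independent degrees'' and ``$\var(f)=n\cdot\var(d_i^2)$,'' which is false for the uncentered $\sum d_i^2$ (there the shared-edge covariances contribute $\Theta(n^4p^3(1-p))$, the same order as $n\var(d_i^2)$, and the raw $\sum d_i^2$ would need a stronger signal); and (ii) you spell out the first-moment cancellation at order $k^2(p-q)$, using $(1-p-q)-(1-2q)=-(p-q)$, whereas the paper asserts the $\Theta(k^3(p-q)^2)$ gap after a single algebraic line. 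Both proofs land on the same conclusion; your exposition is the rigorous instantiation of what the paper sketches.
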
 
The proof goes by considering the statistics $f=\sum d_i^2$ where $d_i$ are the degrees of $G$ and computing the mean different over variance. The upper bound gives a boundary strictly between the sum-test level for PDS and the spectral recovery level (Kesten-Stigum threshold), suggesting some consideration into the ``community" structures compare to a vanilla sum test.
\begin{proof}
Consider the test statistics $f(G)=\sum d_i^2$ where $d_i$ are the (independent) degrees. We show that there exist $\tau$ such that $\P_{H_0}(f(G)>\tau)+\P_{H_1}(f(G)<\tau)\to 0$ as $n\to\infty$.

\spac Firstly, consider what happens to the degrees under $H_0$: they are $n$ independent samples from $\Binom(n, p_0)$ with expectation given by
$$\mathbb{E}(f)=n\cdot \mathbb{E}_{x\sim \Binom(n, p_0)}x^2=n^2p_0(1-p_0)+n^3p_0^2=n^2p_0+(n^3-n^2)p_0^2$$
Similarly in $H_1$, there are $n-k$ nodes that are not in the planted set and their corresponding second moment of degree is:
$$\mathbb{E}(\sum_{i\not\in v}d_i^2)=(n-k)\mathbb{E}_{x\sim \Binom(n, q_1)}x^2=n(n-k)q_1+
\left((n-k)n^2-n(n-k)\right)q_1^2$$
and the $k$ nodes planted has:
\begin{align*}
    & \mathbb{E}(\sum_{i\in v}d_i^2)=k\mathbb{E}_{x\sim \Binom(n-k,\, q_1), y\sim \Binom(k,\, p_1)}(x+y)^2\\
    &=k\left(\left((n-k)q_1(1-q_1)+(n-k)^2q_1^2\right)+kp_1(1-p_1)+k^2p_1^2+2q_1p_1k(n-k)\right)\\
    &=k(n-k)q_1+k^2p_1+k((n-k)q_1+kp_1)^2-k(n-k)q_1^2-k^2p_1^2
\end{align*}
The difference between expectations in $H_0$ and $H_1$ is thus:
$$k((n-k)q_1+kp_1)^2+(n-k)(n^2-n-k)q_1^2-k^2p_1^2-n^2(n-1)p_0^2\in \Theta(k^3(p_1-q_1)^2).$$
Now we turn to estimating the variance of $f$. First consider the variance of $f\sim H_0$, which can be computed via the moments of binomial distribution\begin{align*}
    \var(f)&=n\cdot \var(d_i^2)\\
    &=\mathbb{E}_{x\sim \Binom(n, p_0)}(x^4)-\left(\mathbb{E}_{x\sim \Binom(n, p_0)}(x^2)\right)^2 \\
    &=n^2p_0(1-p_0)(1+(2n-6)p_0(1-p_0))\in\Theta(n^3p_0^2(1-p_0)^2)
\end{align*}
due to a simple computation $\mathbb{E}_{x\sim \Binom(n, p_0)}((x-np)^4)=np(1-p)(1+(3n-6)p(1-p))$.

\spac
For the variance of $f\sim H_1$, consider
\begin{align*}
    \var(f)&=(n-k)\cdot \var_{i\not\in v}(d_i^2)+k\cdot\var_{j\in v} (d_j^2)\\
    &\leq \Theta(n^3\left(q_1(1-q_1)\right)^2)+ k\cdot\mathbb{E}_{j\in v} \left((d_j-\bar d)^4\right)\\
    &\leq \Theta(n^3\left(q_1(1-q_1)\right)^2)+O(kn^2\left(q_1(1-q_1)\right)^2)\\
    &= \Theta (n^3p_1^2(1-p_1)^2)
\end{align*}
because of local inequality $(x+y)^4\leq 16(x^4+y^4)$.
Therefore, we know that $$\frac{\mathbb{E}_{H_1}(f)-\mathbb{E}_{H_0}(f)}{\sqrt{\var_{H_0}(f)+\var_{H_1}(f)}}\in O(\frac{k^3(p_1-q_1)^2}{n^{1.5}p_0(1-p_0)})=O((\frac{k^2}{n})^{1.5}D_{KL}(p_1\|q_1))$$and when this value is in $\omega(1)$, the two hypothesis can be separated (by, for instance, thresholding at $(\mathbb{E}_{H_1}(f)+\mathbb{E}_{H_0}(f))/2$).
\end{proof}

\subsection{Proof of reduction to $\pds$}
\begin{theorem} [Reduction to $\pds$]\label{PDSreduction}
Given any fixed constant $\alpha>0$. Let $N, k_0$ be parameters of planted clique graph size, $(n, k)$ be the target graph sizes where $\frac{n}{k}=:r<\left(\frac{N}{k_0}\right)^{1-\alpha}$.  We present the following reduction $\phi$ with absolute constant $C>1$:
\begin{itemize}
    \item \textup{Initial $k$-$\pd$ Parameters:} vertex count $N$, subgraph size $k_0\in o_N(N)$ dividing $N$, edge probabilities $0 < q < p \leq 1$ with $\min\{q, 1 - q, p-q\} = \Omega(1)$, and a partition $E$ of $[N]$. We further assume that $k_0\in o(\sqrt{N})$ holds (otherwise detection for the PDS problem will be easy).

    \item \textup{Target $\pds$ parameters:} $(n, r, k)$ where $r\in o(\sqrt{n})$ is a specified parameter, $k=n/r$ is the target subgraph size, and $n$ is the smallest multiple of $k_0r$ greater than $(1+\frac{p}{Q})N$ where $$Q=1-\sqrt{(1-p)(1-q)}+1_{p=1}(\sqrt{q}-1)$$is the cloned signal strength from pre-processing.
    
    \item \textup{Target $\pds$ edge strength:}
    $$\gamma=\mu(\frac{k_0r}{n})^{1.5},\;\;\;\;P_1=\Phi(\frac{(r^2-1)\gamma}{r^2}),\;\;\;\;\; P_2=\Phi(-\frac{\gamma}{r^2}),$$
    where $\mu\in(0, 1)$ satisfies that$$\mu\leq\frac{1}{12C\sqrt{\log(N)+\log(p-Q)^{-1}}}\cdot\min\{\log(\frac{p}{Q}), \log(\frac{1-Q}{1-p})\}.$$
    where $\gamma$ denotes the signal strength $\gamma=\Theta(D_{KL}(P_1\|P_2))$ roughly the KL-divergence between two output Bernoullis.
    \item Applying $\phi$ on the given input graph instance $G$  yields the following:
    \begin{align*}
        \dTV(\phi(G(N,\frac{1}{2})), G(n, \frac{1}{2}))&=o_n(1)\\
        \dTV(\phi(PC_{E}(N, k_0, \frac{1}{2})), \pd(n, k, P_1, P_2))&=o_n(1)
    \end{align*}
\end{itemize}
\end{theorem}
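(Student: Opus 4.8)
The plan is to build the reduction $\phi$ as a composition of the building blocks developed in Section 2, tracking the total-variation error through each stage via the accumulation lemma (\sref{Lemma}{TVACC}). Concretely, starting from an instance $G$ of $PC_E(N, k_0, \tfrac12)$, I would proceed in the following order. First, apply the \textsc{Graph-Clone} subroutine (as invoked in \sref{Lemma}{partialtofull}, from Lemma 5.2 of \cite{BBH19}) to boost the effective edge-strength gap: this converts a single sample of $\pd(N, k_0, 1, \tfrac12)$ into one sample of $\pd$ with the \emph{same} planted location but Bernoulli biases $p$ versus $Q = 1 - \sqrt{(1-p)(1-q)} + \one_{p=1}(\sqrt q - 1)$, so that the "$p$ vs $q$" hypothesis pair is realized with a well-separated $\min\{q, 1-q, p-q\} = \Omega(1)$ margin. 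This is also why $n$ is padded to a multiple of $k_0 r$ just above $(1 + p/Q)N$: we need room for the extra clone vertices. Second, pad the graph with fresh $\Bern(\tfrac12)$ (resp.\ $\Bern(Q)$) entries so the vertex count becomes exactly $n$, while preserving the one-planted-bit-per-block structure inherited from the partition $E$.

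Next comes the core signal-transformation pipeline. Because of the $k$-$\pc$ partition constraint, the $n \times n$ adjacency matrix decomposes into $k^2 = (n/r)^2$ blocks of size $r \times r$, each containing exactly one planted bit, i.e.\ each block is a $\mathrm{PB}(r^2, i, p, q)$ instance. On each block I would run \textsc{Bern-Rotations} (\sref{Lemma}{lem:bern-rotations}) with the design matrix $A = K^{1/r}_{\sqrt{n^2/k^2}}$ supplied by \sref{Lemma}{RegularBig} — more precisely, with the $m^2 \times m^2$ Kronecker-product matrix built on $m = n/k = r$, whose rows are exactly the candidate mean adjacency matrices of the target $\pd$ and whose largest singular value is $\le 1$. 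By \sref{Lemma}{RegularBig} such a matrix is found in polynomial time with probability $1 - o(1)$ (amplified by $O(n)$ independent tries). The push-forward guarantee of \sref{Lemma}{lem:bern-rotations} says each block maps to $\N(\mu \lambda^{-1} A_i, I)$ under $H_1$ (with $\lambda = \sigma^2(A) \le 1$, so effectively $\N(\mu A_i, I)$) and to $\N(0, I)$ under $H_0$, each at TV cost $O(r^2 R^{-3})$ per block. Then threshold each Gaussian coordinate at $0$: since the mean entries are $O(\mu \gamma) \ll 1$, thresholding sends $\N(0,1) \mapsto \Bern(\tfrac12)$ and $\N(\nu, 1) \mapsto \Bern(\Phi(\nu))$ with only a constant-factor KL loss, yielding precisely the two output values $P_1 = \Phi((r^2-1)\gamma/r^2)$ and $P_2 = \Phi(-\gamma/r^2)$ (the specific form of these reflecting that each row of $K$ has the recentered "$(r-1)\gamma$ in a $1/r$ fraction, $-\gamma/r^2$ elsewhere" structure after the Kronecker product and recentering). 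Under $H_0$ every entry becomes $\Bern(\tfrac12)$, giving $G(n, \tfrac12)$; under $H_1$ the planted row $A_i$ determines a $k \times k$ elevated submatrix, giving $\pd(n, k, P_1, P_2)$.

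Finally, I would collect the error terms: the clone and padding steps are exact or $o(1)$ in TV; each of the $k^2$ blocks contributes $O(r^2 R^{-3})$ from \textsc{Bern-Rotations} and $O(R^{-3})$ from the rejection/thresholding step, so with the rejection-kernel parameter $R$ chosen polynomially large in $N$ (e.g.\ $R = N^{10}$), the total is $k^2 \cdot \mathrm{poly}(n) \cdot R^{-3} = o(1)$; the constraint on $\mu$ in the statement is exactly the admissibility condition of \sref{Lemma}{lem:5c} with this $R$. Symmetry and the degree constraints of the mean-corrected null are automatic because $K$ is recentered to have zero column mean, which is what makes the output land on $\pds$ (i.e.\ $G(n, P_1)$ versus $\pd(n, k, P_1, P_2)$ after a harmless shift, equivalently the stated $G(n,\tfrac12)$ versus $\pd(n,k,P_1,P_2)$ up to relabeling the "null" value). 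The whole map runs in $\mathrm{poly}(N, R) = \mathrm{poly}(N)$ time.

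\textbf{Main obstacle.} I expect the delicate point to be verifying that the thresholded output is \emph{exactly} a $\pd$ / $\pds$ instance with the advertised parameters — i.e.\ that every row of the Kronecker design matrix $K$, after the $\mu\lambda^{-1}$ rescaling and the $\Phi(\cdot)$ thresholding, produces an adjacency-matrix mean that is constant ($P_2$) off a genuine $k \times k$ combinatorial rectangle and constant ($P_1$) on it, with no stray intermediate values, and that these rectangles are in bijection with planted-clique locations so that the planted distribution is mapped correctly (not just each conditional-on-location instance). This requires unpacking the combinatorial structure of $R + \tfrac{1}{\sqrt{mr}}J$ (the "$\tfrac{m}{r}$ ones in an all-zero vector" form) and checking it survives the Kronecker product and the final recentering by $\tfrac{1}{mr}J \otimes J$; the two-valued-ness of $K$ is asserted in \sref{Lemma}{RegularBig} but matching it to the precise $(P_1, P_2)$ and confirming the planted size is exactly $k$ (not binomial — this is the whole point of using regular graphs, per \sref{Remark}{rmk:1}) is where the bookkeeping is most likely to bite. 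A secondary nuisance is handling the $p = 1$ (Planted Clique) boundary case uniformly, where $Q = \sqrt q$ and the clone step degenerates — this is why the $\one_{p=1}(\sqrt q - 1)$ correction appears in the definition of $Q$.
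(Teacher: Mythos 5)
Your high-level pipeline (clone $\to$ pad/bipartize $\to$ per-block Bernoulli rotation with the Kronecker design matrix $\to$ threshold at $0$) is the same as the paper's, and you correctly flag the delicate points (two-valuedness and fixed-size planting of $K$, the $p=1$ special case). However there is a real gap in the block decomposition that would derail the middle of the argument.

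You decompose the $n\times n$ matrix into $k^2=(n/r)^2$ blocks of size $r\times r$ with $m=n/k=r$. This is wrong. The decomposition must follow the \emph{source} partition $E$ of $[N]$ (lifted to $S$ on $[n]$ by \textsc{To-$k$-Partite-Submatrix}): $k_0$ parts of size $n/k_0$, hence $k_0^2$ blocks of size $(n/k_0)\times(n/k_0)$ and $m=n/k_0$ (not $r$ — here $n/k_0 = r\cdot(k/k_0)\gg r$). The finer $r\times r$ blocks you propose would mostly contain \emph{no} planted bit (there are only $k_0^2$ planted bits), so the per-block $\textup{PB}(\ell,i,\cdot,\cdot)$ structure on which \sref{Lemma}{lem:bern-rotations} hinges does not hold in your setup. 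Relatedly, each PB instance has dimension $\ell=(n/k_0)^2$, not $r^2$, and strength $(p,Q)$ post-clone, not $(p,q)$; and the design matrix is $K_{n/k_0}^{1/r}$ of dimension $(n/k_0)^2\times(n/k_0)^2$, whose rows carry $(m/r)\times(m/r)=(k/k_0)\times(k/k_0)$ elevated rectangles — it is the $k_0\times k_0$ tiling of these across the blocks that yields the total $k\times k$ planted set, which is exactly the bookkeeping your version cannot reproduce. (I suspect the confusion stems from the paper's Section 2 writing ``each $\frac{n}{k}\times\frac{n}{k}$ block'' with $n,k$ there denoting the \emph{source} $\pc$ parameters, i.e. $(N,k_0)$, not the target $(n,k)$.)

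Two smaller points. The padding is not a separate stage you append after cloning; it is built into \textsc{To-$k$-Partite-Submatrix} (\sref{Lemma}{TOBIPARTITE}), and crucially the fresh off-block entries are $\Bern(Q)$ under \emph{both} hypotheses (after cloning the $H_0$ background is already $\Bern(Q)$, not $\Bern(1/2)$) — padding differently under $H_0$ and $H_1$, as you wrote, would let the algorithm distinguish the hypotheses from the padding alone. Finally, you omit the diagonal/bipartization step entirely: converting the symmetric adjacency matrix into a full asymmetric one with consistent diagonal planting is a nontrivial part of \textsc{To-$k$-Partite-Submatrix} and cannot be skipped before the per-block rotations.
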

\begin{figure}
    \input{bigchunk}
    \vspace{-5mm}
    \caption{\footnotesize Reduction from $k$-PDS to $\pds$.}
    \label{fig: pdsred}
\end{figure}
\textit{Proof sketch:} Step by step, our proof proceeds from establishing the following lemmas for each step of our reduction in \sref{Figure}{fig: pdsred}, {a formal proof for the lemmas will be presented later}.
\begin{lemma} [$\textsc{To-}k\textsc{-Partite-Submatrix}$ $-$ Lemma 7.5 in \cite{BB20}]\label{TOBIPARTITE}
With the given assumptions, step 1 (denote as $\mathcal{A}_1$) of the reduction runs in poly$(N)$ time and it follows that:
\begin{align*}
    \dTV(\mathcal{A}_1(G(N, q)), \Bern(Q)^{\otimes n\times n})&\leq 4k_0\exp{(\frac{-Q^2N^2}{48pkn})}\\
    \dTV(\mathcal{A}_1(G(N, \mathcal{U}_N(E),p, q)), \mathcal{M}_{[n]\times [n]}(\mathcal{U}_n(S), p, Q)&\leq 4k_0\exp{(\frac{-Q^2N^2}{48pkn})}+\sqrt{\frac{C_Q k_0^2}{2n}}
\end{align*}
where $E$ is the partition of $[N]$ and $S$ is the partition of $[n].$
\end{lemma}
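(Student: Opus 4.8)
\emph{Proof plan.} The plan is to view $\mathcal A_1$ as the composition of three operations and to control the total variation introduced by each one separately, combining the pieces with \sref{Lemma}{TVACC}: (i) \textsc{Graph-Clone}, which replaces a single sample by two conditionally independent samples while shifting the ``out-of-community'' edge probability from $q$ to $Q$; (ii) drawing the auxiliary randomness --- the sub-blocks $T_i\subseteq S_i$, the bijections $\pi_i\colon T_i\to E_i$, and the diagonal-generating sets $X_i,Y_i$ --- none of which depends on the input graph; and (iii) the deterministic step that writes the two cloned graphs into the $T_i\times T_j$ corners of $F$, pads the complement $S_i\times S_j\setminus T_i\times T_j$ with fresh $\Bern(Q)$ bits, and fills the diagonal from $X_i,Y_i$.

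For (i) I would quote the \textsc{Graph-Clone} lemma (Lemma 5.2 of \cite{BBH19}): it is a $\mathrm{poly}(N)$-time channel sending $\mathcal M_{[N]\times[N]}(\eta\times\eta,\Bern(p),\Bern(q))$ exactly to $\mathcal M^{\otimes2}_{[N]\times[N]}(\eta\times\eta,\Bern(p),\Bern(Q))$ for the stated $Q=1-\sqrt{(1-p)(1-q)}$ (with the indicated $p=1$ correction); in particular it sends $G(N,q)$ to $G(N,Q)^{\otimes2}$ with zero loss and sends $\pd(N,\mathcal U_N(E),p,q)$ to a pair of copies that are independent \emph{given} the planted location, which is still distributed as $\mathcal U_N(E)$. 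After this step the two halves $G_1,G_2$ are used, respectively, for the entries above and below the diagonal, which is exactly what decouples the otherwise forbidden symmetry of the clique matrix.

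For (ii)--(iii) I would condition on the auxiliary randomness (and couple it to the latent planted location of the target) and verify the match block by block. Off a diagonal block, i.e. on $S_i\times S_j$ with $i\neq j$: the $T_i\times T_j$ corner receives the block $G_1[\pi_i(T_i),\pi_j(T_j)]$ or $G_2[\pi_i(T_i),\pi_j(T_j)]$ and everything else receives fresh $\Bern(Q)$, so under $H_0$ every entry is an independent $\Bern(Q)$ bit, an \emph{exact} match; under $H_1$ the only elevated entry is the single planted bit of $\pd$, sitting at $(\pi_i^{-1}(v_i),\pi_j^{-1}(v_j))\in T_i\times T_j$, and since $T_i$ is a uniform $(N/k_0)$-subset of $S_i$ and $v_i$ is uniform in $E_i$ the image $w_i:=\pi_i^{-1}(v_i)$ is exactly uniform on $S_i$ and independent over $i$, so the law on these blocks matches $\mathcal M_{[n]\times[n]}(\mathcal U_n(S),p,Q)$ \emph{exactly}. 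On a diagonal block $S_k\times S_k$: because $\PCR$ plants one vertex per part, the block $G_1[\pi_k(T_k),\pi_k(T_k)]$ (and likewise $G_2$) carries no planted signal, so its off-diagonal entries, and the padding outside $T_k\times T_k$, are independent $\Bern(Q)$ under both hypotheses and again match exactly. The only remaining entries are the $n/k_0$ diagonal entries of $S_k$, and this is where all of the error is incurred --- it is also the main obstacle. Here the role of the $X_k/y_k/Y_k$ recipe is to reinstate on each part's diagonal the planted coordinate that a symmetric clique does not supply; the total number of diagonal ones in $S_k$ equals $\max\{|X_k|,\Bin(n/k_0,Q)\}$, so on the event $\{\Bin(n/k_0,Q)\ge|X_k|\}$ it is exactly $\Bin(n/k_0,Q)$ and the layout collapses to the one required (with the planted diagonal bit behaving like $\Bern(p)$). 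Using the slack $n>(1+p/Q)N$, which makes $Q\,n/k_0$ exceed $\Expect|X_k|$ by a constant factor, a Hoeffding/Chernoff bound bounds the complementary event by $\exp\!\big(-\Omega(Q^2N^2/(pkn))\big)$; a union bound over the $k_0$ parts plus a constant factor for the two one-sided tails and the two cloned copies gives the claimed $4k_0\exp(-Q^2N^2/(48pkn))$. Under $H_1$ there is additionally a discrepancy between the hypergeometric joint law of how many diagonal ones land in $T_k$ versus $S_k\setminus T_k$ that the recipe produces and the binomial-and-independent law demanded by $\mathcal M$; bounding this per part by a Hellinger/Pinsker estimate against the product target and summing over $k_0$ parts contributes the extra $\sqrt{C_Q k_0^2/(2n)}$.

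Finally, each ingredient --- sampling the $T_i,\pi_i,X_i,y_i,Y_i$, running \textsc{Graph-Clone}, and writing down $F$ --- is $\mathrm{poly}(N)$, and the per-block/per-coordinate estimates are assembled using \sref{Lemma}{Tensor} and then composed through \sref{Lemma}{TVACC}, following the line of Lemma 7.5 of \cite{BB20}. The delicate point throughout is the diagonal bookkeeping: confirming that the fresh $\Bern(Q)$ padding together with the $X_i/Y_i$-generated diagonal reproduces, up to the Chernoff-small and $\sqrt{k_0^2/n}$ terms, the restriction of $\Bern(Q)^{\otimes n\times n}$ (resp.\ of $\mathcal M$) to the padded region, and that none of these manipulations distorts the marginal law of the planted location.
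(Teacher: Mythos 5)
Your proposal follows essentially the same approach as the paper, which in fact defers the full argument to Lemma 7.5 of \cite{BB20} and only sketches the idea informally (\textsc{Graph-Clone}, then pad the diagonal so the error is hidden). Your decomposition into \textsc{Graph-Clone} $\to$ auxiliary randomness $\to$ block-by-block fill-in, with the exact off-diagonal matching and the diagonal contributing the Chernoff term $4k_0\exp(-Q^2N^2/(48pk_0n))$ plus the Pinsker-type term $\sqrt{C_Q k_0^2/(2n)}$, is precisely the structure of that cited proof.
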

\begin{lemma} [Bernoulli Rotations for $\pds$]\label{BernRotation} Let $\mathcal{A}_2$ denote the output matrix $M$ from the second step of our reduction (before permutation). Suppose $S$ is a partition of $[n]$ to $k_0$ equal parts and planted set $|T\cap S_i|=1$ for all $i$. Let $M_i: S_i\to [n/k_0]$ be any fixed bijection. Let $K_{n/k_0}^{1/r}$ be the design matrix obtained from \sref{Lemma}{RegularBig} with embedded sets $A_1, A_2,\dots, A_{n/k_0}\subset [n/k_0]$, then the following holds:
\begin{align*}\dTV(\mathcal{A}_2(\Bern(Q)^{\otimes n\times n}), \;\mathcal{N}(0, 1)^{\otimes n\times n} )&=O(n^{-1})\\
\dTV(\mathcal{A}_2(\mathcal{M}_{[n]\times [n]}(\mathcal{U}_n(S), p, Q)),\; \mathcal{L}( \gamma\cdot X+\mathcal{N}(-\frac{\gamma}{r^2}, 1)^{\otimes n\times n}))&=O(n^{-1})
\end{align*}
where $X\in \mathbb{R}^{n\times n}$ is the random variable defined in each block $S_i\times S_j$ as a function of $T$:$$X_{S_i, S_j}=\left(
\text{$\mathds{1}(M_i^{-1}(A_{f(i)})\times M_j^{-1}(A_{f(j)}))$ where $f(i)=M_i(T\cap S_i)$}\right)$$
\end{lemma}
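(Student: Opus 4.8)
\emph{Proof approach.} The plan is to reduce everything to a blockwise application of \textsc{Bern-Rotations} (\sref{Lemma}{lem:bern-rotations}) and then aggregate. First I would condition on the planted location $T$, a set with $|T\cap S_i|=1$ for every $i$. Conditioned on $T$, the distribution $\mathcal{M}_{[n]\times[n]}(\mathcal{U}_n(S),p,Q)$ is a product, over the $k_0^2$ ordered pairs $(i,j)$, of independent instances in which the flattened block $F_{S_i,S_j}\in\{0,1\}^{m^2}$ (with $m:=n/k_0$) follows $\mathrm{PB}(m^2, p_{ij}, p, Q)$, the single planted coordinate $p_{ij}$ being the flattened index of the pair $(T\cap S_i, T\cap S_j)$; under $H_0$ every block is simply $\Bern(Q)^{\otimes m^2}$. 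Step 2 of the reduction applies \textsc{Bern-Rotations} independently to each such flattened block, with design matrix $A:=(K_m^{1/r})^{T}$, rejection-kernel parameter $R$, and mean parameter tuned to produce the stated $\gamma$. Since the flattening bijections and the trailing permutation (the latter excluded from $\mathcal{A}_2$ by hypothesis) are data-independent, it suffices to control each block and sum.

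For a single block I would invoke \sref{Lemma}{lem:bern-rotations}. Its hypotheses hold: $0<Q<p\le 1$ with $\min(Q,1-Q,p-Q)=\Omega(1)$ follows from $Q=1-\sqrt{(1-p)(1-q)}$ (resp.\ $Q=\sqrt q$ when $p=1$) together with $\min(q,1-q,p-q)=\Omega(1)$; the design matrix $A=(K_m^{1/r})^{T}$ has top singular value at most $1$ by \sref{Lemma}{RegularBig}, so $\lambda\le 1$; and the mean parameter can be taken small enough because, with $R$ a fixed polynomial in $N$ (say $R=n$, so that $6\log R+2\log(p-Q)^{-1}=\Theta(\log N)$), the bound required by \sref{Lemma}{lem:5c} is exactly the one imposed on $\mu$ in \sref{Theorem}{PDSreduction}, with the absolute constant $C$ absorbing $2\sqrt 6$ and the $\Theta(1)$ constants from the normalisation of $K_m^{1/r}$. \sref{Lemma}{lem:bern-rotations} then gives, per block, $\dTV=O(m^2R^{-3})$ between the \textsc{Bern-Rotations} output and $\mathcal{N}(0,I_{m^2})$ under $H_0$, and between the output and $\mathcal{N}(\mu\lambda^{-1}A_{p_{ij}},I_{m^2})$ under $H_1$. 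Unpacking $A_{p_{ij}}$ as the $p_{ij}$-th column of $K_m^{1/r}$ and using the explicit formula in \sref{Lemma}{RegularBig} with $p_{ij}$ identified with the pair $(f(i),f(j))$, this mean vector takes the value $\gamma$ on the product set $M_i^{-1}(A_{f(i)})\times M_j^{-1}(A_{f(j)})$ and $-\gamma/r^2$ elsewhere, i.e.\ it equals the $(i,j)$-block of $\gamma X+(-\gamma/r^2)\mathbf{1}$ with $X$ as in the statement, where $\gamma=\mu(r/m)^{3/2}$ up to the absorbed constants.

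To aggregate, note that conditioned on $T$ the source is a product over the $k_0^2$ blocks, $\mathcal{A}_2$ acts blockwise, and the two targets $\mathcal{N}(0,1)^{\otimes n\times n}$ and $\mathcal{L}(\gamma X+\mathcal{N}(-\gamma/r^2,1)^{\otimes n\times n})$ are likewise (conditioned on $T$) products over blocks; so tensorization of TV (\sref{Lemma}{Tensor}) bounds the conditional distance by $k_0^2\cdot O(m^2R^{-3})=O(n^2R^{-3})=O(n^{-1})$. For the $H_1$ claim I would then average over $T\sim\mathcal{U}_n(S)$: the source $\mathcal{M}_{[n]\times[n]}(\mathcal{U}_n(S),p,Q)$ and the target $\mathcal{L}(\gamma X+\mathcal{N}(-\gamma/r^2,1)^{\otimes n\times n})$ are mixtures over $T$ against the \emph{same} mixing measure, so joint convexity of $\dTV$ bounds the mixture distance by $\sup_T$ of the conditional distances, again $O(n^{-1})$; the $H_0$ claim involves no conditioning.

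The main obstacle will be the bookkeeping in the second paragraph rather than any probabilistic estimate: one must fix the flattening bijections and the design sets $A_1,\dots,A_m$ (equivalently, the directed regular graph underlying $R$ in \sref{Lemma}{RegularMatrix}, transposed where the two index conventions disagree) so that the row of $(K_m^{1/r})^{T}$ selected by the \emph{unknown} planted coordinate is \emph{exactly} $\gamma X_{S_i,S_j}-(\gamma/r^2)\mathbf{1}$, and track the $\Theta(1)$ constants through the rejection kernel, the normalisation of $K_m^{1/r}$, and the $(C_\alpha+1)^{-2}$ scaling so that the output edge strength is precisely the $\gamma$ of \sref{Theorem}{PDSreduction}. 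All the quantitative content is outsourced to \sref{Lemma}{lem:bern-rotations} and \sref{Lemma}{RegularBig}.
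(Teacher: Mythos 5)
Your proposal is correct and takes essentially the same route as the paper: condition on the planted set $T$, observe that each flattened block $F_{S_i,S_j}$ is a $\mathrm{PB}(m^2,\cdot,p,Q)$ instance, invoke Lemma~\ref{lem:bern-rotations} with the design matrix from Lemma~\ref{RegularBig} to identify the block mean with the $(i,j)$-block of $\gamma X-(\gamma/r^2)\mathbf 1$, and tensorize over the $k_0^2$ blocks. The only differences are cosmetic — you make the conditioning-then-average-over-$T$ step explicit via joint convexity (the paper leaves it implicit) and you carry the tighter $O(n^2R^{-3})$ bound where the paper coarsens to $O(n^4R^{-3})$; both are absorbed by the freedom to take $R$ a large enough polynomial in $n$. (One small slip in your second paragraph: the mean on the product set is $\gamma(1-1/r^2)=(r^2-1)\gamma/r^2$, not $\gamma$, though the displayed formula $\gamma X+(-\gamma/r^2)\mathbf 1$ is what you actually use and it is correct.)
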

\begin{lemma} [Thresholding from Gaussians]\label{Gaussianization}
Let $\mathcal{A}_3$ be the final step from the above reduction, with the same notations as the previous lemma, then:
$$\mathcal{A}_3(\mathcal{N}(0, 1)^{\otimes n\times n})\sim G(n, 1/2)$$
$$\mathcal{A}_3(\mathcal{L}( \gamma\cdot X+\mathcal{N}(-\frac{\gamma}{r^2}, 1)^{\otimes n\times n}))\sim \pd(n, k, P_1, P_2).$$
\end{lemma}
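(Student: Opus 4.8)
The plan is to establish both identities by a direct distributional computation; since the two claims are \emph{exact} equalities in law, there is no total-variation error to accumulate at this step, all the slack already having been spent in \sref{Lemma}{TOBIPARTITE} and \sref{Lemma}{BernRotation}. The step $\mathcal{A}_3$ does nothing but threshold the $n\times n$ matrix $M$ at $0$ on its strictly-upper-triangular positions, together with the uniform relabeling of $[n]$ that the reduction carries along.

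For the null claim, $\mathcal{A}_3$ reads only the ${n\choose 2}$ entries $M_{ij}$ with $i<j$, and under the input $\mathcal{N}(0,1)^{\otimes n\times n}$ all $n^2$ coordinates are independent standard Gaussians; since $\Phi(0)=\tfrac12$ and the Gaussian law is atomless, the indicators $\indc{M_{ij}\ge 0}$ for $i<j$ are i.i.d.\ $\Bern(1/2)$, so the output graph is $G(n,1/2)$, and a uniform relabeling of the vertices leaves $G(n,1/2)$ invariant.

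For the planted claim, write the input as $M=\gamma X+Z$ with $Z\sim\mathcal{N}(-\gamma/r^2,1)^{\otimes n\times n}$ independent of $X$, so that conditionally on $X$ the entries of $M$ are independent with $M_{ij}\sim\mathcal{N}(\gamma X_{ij}-\gamma/r^2,1)$; hence $\P(M_{ij}\ge 0\mid X)=\Phi(\gamma X_{ij}-\gamma/r^2)$, which equals $P_1=\Phi\!\big(\tfrac{(r^2-1)\gamma}{r^2}\big)$ when $X_{ij}=1$ and $P_2=\Phi\!\big(-\tfrac{\gamma}{r^2}\big)$ when $X_{ij}=0$, exactly the output biases fixed in \sref{Theorem}{PDSreduction}. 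I would then read off the support of $X$ from \sref{Lemma}{BernRotation}: $X$ is the $0/1$ matrix supported exactly on $\tilde T\times\tilde T$, where $\tilde T=\bigcup_{i\in[k_0]}M_i^{-1}(A_{f(i)})$ with $M_i:S_i\to[n/k_0]$ the fixed bijections and $f(i)=M_i(T\cap S_i)$. Since each $|A_j|=(n/k_0)/r$ by \sref{Lemma}{RegularBig} (an integer because $k_0 r\mid n$) and $S_1,\dots,S_{k_0}$ partition $[n]$, we get $|\tilde T|=k_0\cdot\frac{n/k_0}{r}=\frac nr=k$ for every realization of $T$. Consequently, conditionally on $\tilde T$, thresholding produces a graph in which the ${k\choose 2}$ edges inside $\tilde T$ are i.i.d.\ $\Bern(P_1)$ and all other edges are i.i.d.\ $\Bern(P_2)$, independently across edges, i.e.\ exactly $\pd(n,\tilde T,P_1,P_2)$.

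It remains to check that the planted support is uniformly distributed. The uniform relabeling $\pi$ of $[n]$ is independent of the input and hence of $T$ and $\tilde T$, and maps the (structured) support $\tilde T$ to $\pi(\tilde T)$, which --- because $|\tilde T|=k$ deterministically --- is uniform over ${[n]\choose k}$; averaging $\pd(n,\pi(\tilde T),P_1,P_2)$ over this uniform planted location yields precisely $\pd(n,k,P_1,P_2)$. There is no nontrivial estimate anywhere; the only point I would write out with care --- and what I regard as the main obstacle, such as it is --- is the bookkeeping that $\mathcal{A}_3$ only ever touches mutually independent coordinates of $M$ (so that the thresholded upper-triangular entries really are independent Bernoullis), and that the relabeling is exactly what upgrades the block-structured support $\tilde T$ into a uniform $k$-subset, so that one may pass from the conditioned model $\pd(n,\tilde T,\cdot)$ to the unconditioned $\pd(n,k,\cdot)$.
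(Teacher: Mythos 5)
Your proof is correct and follows the same approach as the paper's: threshold the Gaussian matrix entrywise at zero, identify the resulting Bernoulli biases via $\N(\mu,1)\to\Bern(\Phi(\mu))$, count the support size $|\tilde T|=k_0\cdot(n/k_0)/r=n/r=k$, and invoke the final uniform relabeling to turn the block-structured support into a uniform $k$-subset. One note: you are actually more careful than the paper's own write-up, which contains typos — it writes $\Bern(\Phi(P_1))$ where $\Bern(P_1)$ is meant and asserts the output is $\pd(n,n/r,P_2,P_1)$ with the roles of $P_1$ and $P_2$ swapped relative to the lemma statement; your labeling ($X_{ij}=1\Rightarrow\Bern(P_1)$ planted, $X_{ij}=0\Rightarrow\Bern(P_2)$ background) is the correct one consistent with the parameter definitions in \sref{Theorem}{PDSreduction}.
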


As an important pre-processing step, we single out the steps in \sref{Lemma}{TOBIPARTITE} first, the proof in its exact form is deferred to \cite{BB20}. The general idea is that, to construct a bi-partite variant, after applying \textsc{Graph-Clone} to the instance and occupying the lower half of the adjacency matrix, we still need to figure out what happens in the diagonal. However, when we plant around $\sqrt{k}$ entries in a diagonal it's \emph{almost} the same as not planting anything in total variation, which means that we only need to blow up the size by a little bit to \emph{hide} the diagonal.

\spac 
After \sref{Lemma}{TOBIPARTITE}, we arrive at a bi-partite $k$-PDS instance with slightly different parameters, and we prove the following lemmas to complete the reduction.
\begin{proof}[\sref{Lemma}{BernRotation}]
We take a close look at what Bernoulli rotation produces for $H_1$. In the flattening step, we first define $k_0$ bijections $\pi_i$ from $S_i\to [n/k_0]$ (the order to be flattened). Looking at each submatrix block with a planted bit at $(T\bigcap S_i, T\bigcap S_j)$ is equivalently an instance of $$F_{S_i\times S_j}\sim \textup{PB}((n/k_0)^2, t, p, Q)$$ where the location indices are defined with $(i, j): i, j\in [n/k_0]$ and planted bit $$t=(\pi_i(T\bigcap S_i), \pi_j(T\bigcap S_j)):= (t_i, t_j).$$ Therefore, the output row of $K_{n/k_0}^{1/r}$ is precisely (indexed by $r, s$):$$K_{(t_i, t_j), (rs)}=\mu\left(\one\{r\in A_{t_i} \textup{ and } s\in A_{t_j}\}\cdot \sqrt{\frac{r^3k_0^3}{n^3}}-\sqrt{\frac{k_0^3}{rn^3}}\right).$$ After sending $\mathcal{A}(\cdot)_{(rs)}\to M_{\pi_i^{-1}(r), \pi_j^{-1}(s)}$, we know that 
$M\in\mathbb{R}^{(n/k_0)\times (n/k_0)}$ is a bi-partite matrix with $A_{t_i}\times A_{t_j}$ submatrix being elevated and (approximately) distributed as 
$$\mathcal{M}_{S_i\times S_j}(\pi_i^{-1}(A_{t_i})\times \pi_j^{-1}(A_{t_j}), \N((r^2-1)\gamma/r^2, 1), \N(-\gamma/r^2, 1)).$$with total variation loss at most $O((\frac{n}{k_0})^2R_{rk}^{-3})$ by  \sref{Lemma}{lem:bern-rotations}.

\spac For the other hypothesis $H_0$, simply note that the matrix gets sent to $\N(0, 1)$ independently for each entry and gets send to independent standard normal Gaussians. Therefore the rotation matches.

\spac Finally, note that in each block we differs from the target by at most $O(n^2R_{rk}^{-3})$ in $\dTV$, which results in at most $O(n^4R_{rk}^{-3})$ difference in $\dTV$ by the tensorization property. However, note that we can choose $R_{rk}$ to be any polynomial of $n$, and hence the Lemma holds.
\end{proof}
\begin{proof}[\sref{Lemma}{Gaussianization}]
Note that if we threshold at zero, then:
\begin{enumerate}
    \item $\N(0, 1)\to \Bern(1/2)$. 
    \item $\N(\mu, 1)\to \Bern(\Phi(\mu))$ for any $\mu$.
\end{enumerate}
Therefore we know that $\N(\frac{-\gamma}{r^2}, 1)\to \Bern(\Phi(P_1))$, and $\N(\frac{(r^2-1)\gamma}{r^2}, 1)\to \Bern(\Phi(P_2))$, so the strength matches (and hence the case for $H_0$ is proven).

\spac Note that in our previous step in $H_1$, in each block $S_i\times S_j$, a sub-block $A_{t_i}\times A_{t_j}$ is elevated to $\Bern(P_2)$ whereas the rest are $\Bern(P_1)$. This means that in the overall graph, the sets:
$$\bigcup_i \pi_i^{-1}(A_{t_i})\times \bigcup_i \pi_i^{-1}(A_{t_i})$$have elevated density $\Bern(P_2)$ where the rest has density $\Bern(P_1)$. Finally, note that the total size of $\bigcup_i \pi_i^{-1}(A_{t_i})$ is exactly $\sum_{i=1}^{k_0}\frac{n}{k_0r}=\frac{n}{r}$. Therefore, after permuting the nodes we get exactly $\pd(n, n/r, P_2, P_1)$ as the output.
\end{proof}
\begin{proof}[\sref{Theorem}{PDSreduction}]
Define the steps of $\mathcal{A}$ to map inputs to outputs as follows
$$(G, E) \xrightarrow{\mathcal{A}_1, \epsilon_1} (F, S) \xrightarrow{\mathcal{A}_2, \epsilon_2} M \xrightarrow{\mathcal{A}_{\text{3}}, \epsilon_3=0} G'$$
where the following $\epsilon_i$ denotes the total variation difference in each step (from output of $\mathcal{A}$ to the next target). Under $H_1$, consider the following sequence of distributions:
\allowdisplaybreaks
\begin{align*}
\mathcal{P}_0 &= G_E(N, k, p, q) \\
\mathcal{P}_1 &= \mathcal{M}_{[n] \times [n]}(S \times S, \textnormal{Bern}(p), \textnormal{Bern}(Q)) \quad \text{where } S \sim \mathcal{U}_n(F) \\
\mathcal{P}_2 &= \gamma \cdot \one_S\otimes \one_S + \N(-\frac{\gamma}{r^2}, 1)^{\otimes n \times n} \quad \text{where } S \sim \textit{Unif}_n(k) \\
\mathcal{P}_{\text{4}} &= \pd(n, k, P_{1}, P_{2})
\end{align*}
Applying \sref{Lemma}{TOBIPARTITE} before, we can take
$$\epsilon_1 = 4k_0 \cdot \exp\left( - \frac{Q^2N^2}{48pk_0n} \right) + \sqrt{\frac{C_Q k_0^2}{2n}}$$
where $C_Q = \max\left\{ \frac{Q}{1 - Q}, \frac{1 - Q}{Q} \right\}$. For $\epsilon_2$, \sref{Lemma}{BernRotation} guarantees that $\epsilon_2=O(n^{-1})$ suffices. The final step $\mathcal{A}_3$ is exact and we can take $\epsilon_3 = 0$. Finally, note that from the data processing inequality applied to $\dTV$ that $\dTV(\mathcal{A}_i(\cdot), \mathcal{A}_i(\cdot '))\leq \dTV(\cdot, \cdot')$ so each step the total variation loss at most accumulates (\sref{Lemma}{TVACC}), thus by the triangle inequality on TV we get $$\dTV(\mathcal{A}(G_{E}(N, k, p, q)),\pd(n, k, P_{1}, P_{2}))\leq \epsilon_1+\epsilon_2=o(1).$$
Under $H_0$, consider the distributions
\allowdisplaybreaks
\begin{align*}
\mathcal{P}_0 &= G(N, q) \\
\mathcal{P}_1 &= \text{Bern}(Q)^{\otimes n \times n} \\
\mathcal{P}_3 &= \N(0, 1)^{\otimes n \times n} \\
\mathcal{P}_{\text{4}} &= G(n, 1/2)
\end{align*}
As above, Lemmas \sref{Lemma}{TOBIPARTITE}, \sref{Lemma}{BernRotation} and \sref{Lemma}{Gaussianization} imply that we can take
$$\epsilon_1 = 4k_0 \cdot \exp\left( - \frac{Q^2N^2}{48pk_0n} \right), \quad \epsilon_2 = O(n^{-1}), \quad \text{and} \quad \epsilon_{\text{3}} = 0$$
Again by the data processing inequality (\sref{Lemma}{TVACC}), we therefore have that
$$\dTV\left( \mathcal{A}\left( G(N, q) \right), G(n, 1/2) \right) = O(\epsilon_1+\epsilon_2)=o(1)$$
which completes the proof of the theorem.\end{proof}
\subsection{Proof of \sref{Theorem}{PDSbound}}
Suppose we now have a direct reduction to $\pd(n, k, P_1, P_2)$ following the previous notations,
the final step of reduction have to do with the making the uniform degree condition exact, and applying it to general (dense) $P_0$. Consider the following post-reduction process with given target $(P_0, p_1, p_2)$ such that $P_0=p_1-(\frac{n^2}{k^2}-1)\delta=p_2+\delta$ for some $\delta$:
\begin{enumerate}
    \item Apply $k$-PDS-to-$\pds$ on given instance $G_E(n, k, p, q)$ and output $G_1$ with specified $\mu, \gamma$ such that the exact condition $\Phi(\frac{(r^2-1)\gamma}{r^2})=\frac{P_1}{2P_0}$ holds. As before, denote output density $P_1, P_2$ (then $p_1=2P_0P_1$, and $\delta$ can be expressed by $P_0, \gamma, r$).
    \item If $P_0>1/2$, then output $G_2$ by including all edges in $G_1$ and independently including all non-edge in $G_1$ with probability $2P_0-1$, else include all edges in $G_1$ with probability $2P_0$. 
\end{enumerate}
We show that the output of the above second post-processing step $\mathcal{A}_4$ satisfies:
$$\mathcal{A}_4(G(n, 1/2))=G(n, P_0)$$
$$\dTV(\mathcal{A}_4(\pd(n, k, P_1, P_2)),\, \pd(n, k, p_1, p_2))=o(1)$$
These two equations completely settles the reduction from PC to $\pds$ to the general density.

\spac Note that the first equation concerning $H_0$ is trivial, because a $\Bern(s)$ instance get transferred (independently) directly to $\Bern(2P_0s)$ by $\mathcal{A}_4$. Thus we only need to deal with the second equation. The general insights is that, when $\nu$ is small, $\Phi(\nu)$ (Gaussian CDF) is almost a linear function of $\nu$ where $\Phi(\nu)\sim \frac{1}{2}+\frac{1}{\sqrt{2\pi}}\nu$ and the error term (when $\nu<0.1$) is: $$\left|\Phi(\nu)-\frac{1}{2}-\frac{1}{\sqrt{2\pi}}\nu\right|=\left|\frac{1}{\sqrt{2\pi}}\int_0^{\nu}(e^{-x^2/2}-1)dx\right|\leq \frac{1}{\sqrt{2\pi}}\left|\int_0^{\nu} x^2dx\right|=\frac{1}{3\sqrt{2\pi}}|\nu|^3$$since $|e^x-1|<2|x|$ when $|x|<0.01$. Therefore the average degree condition \emph{approximately} but not exactly holds with $P_1$ and $P_2$ already.

\spac Formally, note that $\mathcal{A}_4(\pd(n, k, P_1, P_2))=\pd(n, k, p_1, 2P_2P_0)$, and we only need to show that $\dTV(\pd(n, k, p_1, 2P_2P_0),\pd(n, k, p_1, p_2))=o(1)$. The trick here is to use the data processing inequality again: because the distribution $\pd(n, k, p, q)$ is obtained by applying the (random) planted dense subgraph over $G(n, q)$, thus the total variation:
\begin{align*}
    \dTV(\pd(n, k, p_1, 2P_2P_0),\pd(n, k, p_1, p_2))&\leq \dTV(G(n, 2P_2P_0), G(n, p_2))\\
    &=\dTV(\Bern(2P_0P_2)^{\otimes {n\choose 2}}, \Bern(p_2)^{\otimes {n\choose 2}}).
\end{align*}
Moreover, by \sref{Lemma}{TVBIN} we know that the above is bounded by $|2P_0P_2-p_2|\cdot O(n)$ because the denominator $P_0(1-P_0)\in \Theta(1)$. Now we only need to prove that $|2P_0P_2-p_2|\in o(n^{-1})$. Note that this can be computed as exactly:
\begin{align*}
    |2P_0P_2-p_2|=&2P_0\left|\Phi(\frac{-\gamma^2}{r^2})-\frac{1}{2}+\frac{1}{r^2-1}\left(\frac{p_1}{2P_0}-\frac{1}{2}\right)\right|\\
    =&2P_0\left|\Phi(\frac{-\gamma^2}{r^2})-\frac{1}{2}+\frac{1}{r^2-1}(\Phi(\frac{(r^2-1)\gamma}{r^2})-\frac{1}{2})\right|\\
    \leq& 4\frac{\gamma^3}{r^2}=\frac{\mu}{n}\left(\frac{k_0^2}{n}\right)\left(\frac{k_0r}{n}\right)^{2.5}=o(n^{-1})
\end{align*}because $\frac{k_0^2}{n}<\frac{k_0r}{N}, \frac{k_0r}{n}<\frac{k_0^2}{N}$ are all assumed to be smaller than one, and $\mu\to_n 0$.

\spac We now turn to the formal lower bound from the reduction. Consider the following parametrized model $\pd(n, k, p_1, p_2)$ versus $G(n, p_0)$ such that:
$$p_0=p_1-\frac{r^2-1}{r^2}\gamma=p_2+\frac{1}{r^2}\gamma$$
We prove that there is a computational threshold for all signal levels below $\gamma^2\in\tilde o((\frac{r^{2}}{n})^{1.5})$ by filling out all possible growth rates below.

\spac Note that fix $P_0\in(0,1)$ throughout (we only need it to be bounded away from 0 and 1), for the reduction to work with a given sequence $(N, k_0, p, q)$ to $n=kr$ where $r\in\tilde o(k)$, $k_0\in\tilde o(n^{1/2})$ and $k\in\tilde \omega(k_0)$ are (implicit) functions of $N$, we only need to characterize the range of viable signal strength $\gamma$ that can be reduced to:
$$\gamma=\mu(\frac{k_0r}{n})^{1.5}>\frac{1}{w(n)\sqrt{\log n}}(\frac{k_0r}{n})^{1.5}$$ asymptotically where $w$ can be any (slowly) increasing unbounded function (such as $n^{o(1)}$). Note that this range do indeed cover the entirety of $\tilde o((\frac{r^{2}}{n})^{1.5})$ assuming $k_0\in \tilde\Theta(N^{0.5})$.

\spac Therefore, we know that by the PC conjecture and the given reduction the computational lower bound for $\pds$ holds up to the upper bound level in \sref{Proposition}{UpPDS}. 
\subsection{Proof of \sref{Corollary}{RECOVBOUND}}
\begin{proof}
By \sref{Lemma}{recovtodetect}, we only need to show that a weak recovery blackbox output is a qualifying secret key $k(G)$ for refutation.

\spac Consider a set $R$ that overlaps with the real $\pd$ planted set with size $\rho>1/2$ ($\rho\to_n 1$ holds for weak recovery, but for the sake here we only need it at least $1/2$ for convenience). Consider $\pd$ density parameters $p>q>n^{-1}\log n,\, p\in O(q)$, and consider the sequence of $r_n=\frac{p_n+q_n}{2}$ such that $p=O(q)=O(r)$ and $D_{KL}(p\|r)=\Theta(D_{KL}(p\| q))\subset\tilde\omega(k^{-1})$. However, by flipping the graph for \sref{Theorem}{Thm: StatDKS} in the dense case $\lim p=\lim q=p_0$, we know that the smallest $\rho k$-subgraph in $G(n, p)$ has density at least $r$ with high probability. Thus the density of $R$ is at least $\frac{1}{4}(r+3q)=\frac{7q+p}{8}:=s$ with high probability.

\spac However, by \sref{Theorem}{Thm: StatDKS} again we note that the densest $k$ subgraph in $G(n, p_0)$ will not be of density at least $\frac{s+p_0}{2}$ because $(s-p_0)=\Theta(p-q)$ so $D_{KL}((s+p_0)/2\| p_0)=\Theta(D_{KL}(p\| q))\subset\tilde\omega(k^{-1})$. Therefore with high probability the densest $k$ subgraph of $G(n, p_0)$ has a gap with the density of $1/2$ portion recovered densest $k$ subgraph in $\pds$. By \sref{Lemma}{recovtodetect} and \sref{Theorem}{PDSbound} we are done with the proof.
\end{proof}
\subsection{Statistical Optimal Boundary for $\pds$}
 It is well known that the success of a statistical hypothesis testing between two distributions $P, Q$ from one sample depends on $\dTV(P, Q)$. However, because our alternate hypothesis is composite (mixture over latent $\theta$), it can be challenging to compute the total variation distance between mixture $\mathbb{E}_{\theta} P_\theta$ and null $P_0$ beyond trivial geometric bounds. Thus alternative methods are needed.
 
 \spac In this section, for the completeness of our results on $\pds$, we also present a statement of the statistical boundaries drawing comparisons with a line of statistical lower bounds in the canonical PDS such as \cite{Ingster13,HWX15,MaWu} where their upper bound construction with mean comparison is now invalid in $\pds$. While we can derive asymptotically similar lower bounds, there is provably no polynomial test matching this boundary in $\pds$ \emph{and} recovery is impossible.  
Instead, we derive a boundary necessary for the $\chi^2$ divergence between $H_0$ and $H_1$ to be large via the Ingster's trick to handle mixture in the latent structure.
\begin{theorem} [Statistical lower bounds for $\pds$]\label{StatBound} Consider $\pds$ when $0<q<p_0<q<1/2$. Consider the setting with a sequence of edge densities $p^{(n)}, p_0^{(n)}, q^{(n)}, k^{(n)}$ with graph size $n\to\infty$:
\begin{itemize}
    \item If $\lim\sup\frac{k^{(n)^4}}{n^2}\cdot\frac{(p^{(n)}-q^{(n)})^2}{q^{(n)}(1-q^{(n)})}\to0$ and $\lim\sup k^{(n)}\frac{(p^{(n)}-q^{(n)})^2}{q^{(n)}(1-q^{(n)})}\to 0$, then no (statistical) test on $\pds$ on those parameters can achieve type I + type II error strictly less than $1$ asymptotically.
\end{itemize}

\end{theorem}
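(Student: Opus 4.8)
The strategy is a second-moment (Ingster) computation: I would show $\chi^2(H_1\|H_0)\to 0$, which via the Cauchy--Schwarz bound $\dTV(H_0,H_1)\le\tfrac12\sqrt{\chi^2(H_1\|H_0)}$ gives $\dTV(H_0,H_1)=o(1)$, so that the best achievable sum of Type~I and Type~II errors, namely $1-\dTV(H_0,H_1)$, tends to $1$. The first order of business is to unpack the mean correction. Writing $\gamma:=p_0-q$, the defining relation $p_0=q+\gamma=p-(\tfrac{n^2}{k^2}-1)\gamma$ forces $\gamma=\tfrac{k^2}{n^2}(p-q)$; up to the negligible discrepancy between $\binom k2$ and $\tfrac12k^2$ this is exactly the choice equalizing the expected edge counts under $H_0$ and $H_1$. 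Set $a:=p-p_0=(\tfrac{n^2}{k^2}-1)\gamma$, so that $q-p_0=-\gamma$ and, crucially, $a+\gamma=p-q$. Since $q<p_0<p<\tfrac12$, the quantity $V:=p_0(1-p_0)$ exceeds $q(1-q)$ and $a\gamma<\tfrac{p_0}{2}<V$, which lets me apply the bounds $1+x\le e^x$ ($x>-1$) freely below.

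Next, the core expansion. Conditioned on the planted set $S$, $H_1$ is a product of Bernoullis with mean $p$ on the edges inside $S$ and $q$ elsewhere, while $H_0$ is i.i.d.\ $\Bern(p_0)$. Using the elementary identity $\Expect_{x\sim\Bern(p_0)}\big[f_c(x)f_d(x)/f_{p_0}(x)^2\big]=1+\tfrac{(c-p_0)(d-p_0)}{V}$ for the Bernoulli pmf $f$, and expanding the likelihood ratio $L$ over two independent planted copies $S,S'$ with overlap $t=|S\cap S'|$, I obtain
\[
\Expect_{H_0}[L^2]=\Expect_t\Big[\big(1+\tfrac{a^2}{V}\big)^{\binom t2}\big(1-\tfrac{a\gamma}{V}\big)^{2(\binom k2-\binom t2)}\big(1+\tfrac{\gamma^2}{V}\big)^{\binom n2-2\binom k2+\binom t2}\Big].
\]
Taking logarithms and using $1\pm x\le e^{\pm x}$, the part independent of $t$ equals $-\tfrac{k^4}{2n^2}\tfrac{(p-q)^2}{V}(1+o(1))\le 0$ --- harmless because it only shrinks the bound (and in fact is $o(1)$ by hypothesis~(i)) --- while in the $t$-dependent part the coefficient of $\binom t2$ telescopes to $\tfrac{(a+\gamma)^2}{V}=\tfrac{(p-q)^2}{V}$, which is precisely where the mean correction pays off: it kills the linear-in-$t$ contribution. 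Hence, with $\lambda:=\tfrac{(p-q)^2}{2V}\le\tfrac{(p-q)^2}{2q(1-q)}$,
\[
\Expect_{H_0}[L^2]\le(1+o(1))\,\Expect_t\big[\exp(\lambda t^2)\big],\qquad t\sim\Hyp(n,k,k),\quad \bar t:=\Expect[t]=\tfrac{k^2}{n}.
\]

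It then remains to show $\Expect_t[\exp(\lambda t^2)]=1+o(1)$. I would split the sum over $t$ at its mean $\bar t$: on the bulk $t\lesssim\bar t$ one has $\lambda t^2\lesssim\lambda\bar t^2\asymp\tfrac{k^4}{n^2}\tfrac{(p-q)^2}{q(1-q)}\to0$ by~(i), so this part contributes $e^{o(1)}$; on the upper tail, writing $t=\bar t+u$ and invoking the hypergeometric tail bounds $\P(t\ge j)\le\binom kj(k/n)^j$ and the Bernstein-type inequality $\P(t\ge\bar t+u)\le\exp\!\big(-\tfrac{u^2}{2(\bar t+u/3)}\big)$, each summand is dominated by $\exp\!\big(\lambda\bar t^2+\lambda\bar t\,u+\lambda u^2-c\min(u^2/\bar t,\,u)\big)$. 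Conditions (i)--(ii) give $\lambda\bar t\le\lambda k=o(1)$, and hence also $\lambda\bar t^{3/2}=(\lambda k^4/n^2)^{1/2}(\lambda k^2/n)^{1/2}=o(1)$, which together make the resulting exponential/Gaussian sum equal $1+o(1)$. Combining everything, $\chi^2(H_1\|H_0)=\Expect_{H_0}[L^2]-1=o(1)$, and the claimed impossibility follows.

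The one genuinely delicate point is this last MGF-of-a-square estimate, which must hold uniformly across the entire range of $k$ --- from $k\ll\sqrt n$, where $\bar t<1$ and $t$ is Poisson-like, to $k$ close to $n$, where $t$ concentrates sharply --- so that the dominant tail regime changes, and the near-bulk cross term $e^{\lambda\bar t\,u}$ is the one that forces hypothesis~(ii) and requires (i) and (ii) to be combined. The earlier steps (the mean-correction bookkeeping, the Bernoulli product expansion, and the telescoping of the $\binom t2$ coefficient) are entirely routine; the reverse-Pinsker-type comparison $V>q(1-q)$ is what lets me state the conclusion in terms of the $\chi^2$-divergence $\tfrac{(p-q)^2}{q(1-q)}$ appearing in the hypotheses.
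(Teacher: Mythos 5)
Your proof follows the paper's primary argument exactly: both compute the $\chi^2$ second moment via the Ingster trick and reduce matters to a hypergeometric MGF bound of the form $\Expect\big[\exp(\lambda(H^2-\Expect[H]^2))\big]$ with $H\sim\Hyp(n,k,k)$, and your observation that $V=p_0(1-p_0)>q(1-q)$ justifies stating the hypotheses in terms of $\tfrac{(p-q)^2}{q(1-q)}$. The only departure is in emphasis: you spell out the Bernoulli product expansion and the telescoping of the $\binom{t}{2}$ coefficient (which the paper compresses into ``if we expand the above expression'') and sketch your own bulk-versus-tail split for the final hypergeometric estimate, whereas the paper simply cites Lemma~6 in Appendix~C of \cite{HWX15} at that step.
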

\begin{proof}
Consider the $\chi^2$ trick applied on the mixture: $P_0=G(n, p_0)$ and $P_{\theta}=\pd(n,\theta, p, q)$ with planted set at $\theta$ and $\theta\sim{n\choose k}$ be uniformly distributed.
\begin{align*}
    \chi^2(\mathbb{E}_{\theta}(P_{\theta})\|P_0)&=\int_G \frac{\mathbb{E}_{\theta}(P_{\theta}(G))\mathbb{E}_{\theta'}(P_{\theta'}(G))}{P_0(G)}-1\\
    &=\mathbb{E}_{\theta\indep\theta'}\frac{P_{\theta}(G)\mathbb{E}_{\theta'}(G)}{P_0(G)}-1
\end{align*}
If we expand the above expression and denote the $\lambda=\chi^2(p, q)=\frac{(p-q)^2}{q(1-q)}$ then we end up with the above (tightly) upper bounded by:
$$E(\exp (\lambda (H^2-E(H)^2))$$where $H\sim \theta\bigcap\theta'$ is distributed according to Hypergeometric$(n, k, k)$ (where $k>\sqrt{n}$). This evaluation goes to zero from a local inequality on Hypergeometric inequalities (Lemma 6 in Appendix C of \cite{HWX15}), which concludes our proof.

\spac A better way to view it (from reductions) is as follows: we know that when the inequality condition holds, $\pd(n, k, p, q)$ is in-distinguishable from $G(n, k, q)$ by the PDS boundary in \cite{BBH18}. However, in this case we have:
\begin{align*}
    \dTV(G(n, q), G(n, p_0))&\leq \dTV(\Bern(q)^{\otimes n^2},\Bern(p_0)^{\otimes n^2})\\
    &\leq n\sqrt{\frac{(q-p_0)^2}{q(1-q)}}=\frac{n}{r^2}\sqrt{\lambda}
\end{align*}
by \sref{Lemma}{TVBIN}. Therefore, $\dTV(G(n, q), G(n, p_0))\to 0$ below the statistical $\pd_D$ testing threshold, meaning that $\pds_D$ can also not be performed. This reduction proves the intuition that $\pds_D$ is ``harder" than $\pd$.\end{proof}
\begin{remark}
We also remark that the reverse direction for the above lower bound is still open. Let $H$ be distributed according to Hypergeo$(n, k, k)$ (where $k>\sqrt{n}$), then $E(H)=k^2/n$. Assuming that $\lambda E(H)^2\in \tilde\omega(1)$, if one can prove that:
$$E(\exp (\lambda (H^2-E(H)^2))\to\infty$$as well, which is a stronger version of Lemma 6 in \cite{HWX15}, then $\chi^2$ between two hypotheses of $\pds_D$ diverges, which is still a necessary (insufficient) condition for the upper bound. It is of interest to show a tight statistical detection upper bound for models like $\isbm$ and $\pds$ at the regime when it is computationally infeasible and statistically impossible to recover.
\end{remark}

\section{Proofs for Refutation}
\subsection{Proof of \sref{Theorem}{ISBMBound}}
\begin{theorem}[Reduction from $k-\pd$ to $\isbm$]\label{PDSB}
Let $N, k_0$ be parameters of planted clique graph size, $r<N/k_0$ be a target output for ratio of planted set. Let $\alpha$ be a constant and assume $r<\left(\frac{N}{k_0}\right)^{1-\alpha}$. We present the following reduction $\phi$ with absolute constant $C_{\alpha}>1$:
\begin{itemize}
    \item \textup{Initial $k$-$\pd$ Parameters:} vertex count $N$, subgraph size $k_0\in o_N(N)$ dividing $N$, edge probabilities $0 < q < p \leq 1$ with $\min\{q, 1 - q, p-q\} = \Omega(1)$, and a partition $E$ of $[N]$. We further assume that $k_0\in o(\sqrt{N})$ holds.

    \item \textup{Target $\pd$ parameters:} $(n, r, k)$ where $r\in o_n(\sqrt{n})$ is the specified parameter and $k$ is the expected subgraph size $k=n/r$ and $n$ is the smallest multiple of $k_0r$ that is greater than $(1+\frac{p}{Q})N$ where $$Q=1-\sqrt{(1-p)(1-q)}+1_{p=1}(\sqrt{q}-1).$$
    
    \item \textup{Target $\isbm$ edge strength:}
    $$\gamma=\mu(\frac{k_0r}{n}),\;\;\;P_{11}=\Phi(\frac{(r-1)^2\gamma}{r^2}) ,\;\;\; P_{12}=\Phi(-\frac{(r-1)\gamma}{r^2}),\;\;\;P_{22}=\Phi(\frac{\gamma}{r^2})$$
    where $\mu\in(0, 1)$ satisfies that$$\mu\leq\frac{1}{12C\sqrt{\log(N)+\log(p-Q)^{-1}}}\cdot\min\{\log(\frac{p}{Q}), \log(\frac{1-Q}{1-p})\}.$$
    \item Applying $\phi$ on the given input graph instance $G$  yields the following (when $k_0\in o(\sqrt{N})$):
$$\dTV(\phi(G(N,\frac{1}{2})), G(n, \frac{1}{2}))=o_n(1)$$
$$\dTV(\phi(PC_{\rho}(N, k_0, \frac{1}{2})), \isbm(n, k, P_{11}, P_{12}, 
P_{22}))=o_n(1)$$
\end{itemize}
\end{theorem}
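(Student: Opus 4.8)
The plan is to mirror the three–stage structure of the reduction to $\pds$ (\sref{Theorem}{PDSreduction}), changing only the design matrix. The key simplification is that the bias of $\isbm$ is rank-one \emph{and already centered}: in the Gaussian world the mean matrix we want is $\gamma\,ww^{\top}$ where $w_u=\tfrac{r-1}{r}$ on the planted community $S_1$ (of size $k=n/r$) and $w_u=-\tfrac1r$ off it, so $\sum_u w_u=ka+(n-k)b=0$. Hence, unlike the $\pds$ case, no ``un-centering'' term $-\tfrac1{mr}J\otimes J$ (\sref{Lemma}{RegularBig}) is needed: as design matrix, acting on each $m\times m$ block ($m=n/k_0$), I would take a scaled Kronecker square $A=(R\otimes R)^{\top}$ of the centered regular matrix $R$ of \sref{Lemma}{RegularMatrix}, with the scalar ($\mu$ in the statement) chosen small enough both to satisfy the hypothesis of \sref{Lemma}{lem:5c} and to make $\sigma(A)\le 1$. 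Sub-multiplicativity of the top singular value under Kronecker products gives $\sigma(R\otimes R)=\sigma(R)^2\le C_\alpha^2$ with probability $1-o_m(1)$ by \sref{Lemma}{RegularMatrix}; this is precisely the ingredient that lets us avoid the number-theoretic restriction of the deterministic construction in \cite{BB20} and cover the full boundary, and a valid $A$ can be sampled in $\mathrm{poly}(n)$ time via the fast-mixing switch chain.

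First I would apply \sref{Lemma}{TOBIPARTITE} verbatim to the $k$-$\pd$ input, obtaining an $n\times n$ instance with a partition $S_1,\dots,S_{k_0}$ of $[n]$ into parts of size $m$, cloned Bernoulli strengths $Q<p$, and \emph{exactly one} planted bit in each $m\times m$ block $F_{S_i,S_j}$; here $n$ is the least multiple of $k_0r$ exceeding $(1+p/Q)N$, so $n=\Theta(N)$ since $k_0r<N$, and the stage-one cost is $\epsilon_1=4k_0\exp(-Q^2N^2/(48pk_0n))+\sqrt{C_Qk_0^2/(2n)}=o(1)$ under the standing assumption $k_0\in o(\sqrt N)$. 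Next, flatten each block to a vector of length $m^2$ — a $\textup{PB}(m^2,t_{ij},p,Q)$ instance — and run \textsc{Bern-Rotations} with design matrix $A$, biases $Q<p$, and rejection-kernel parameter a large polynomial in $n$; by \sref{Lemma}{lem:bern-rotations} this sends the block to $\N(0,I_{m^2})$ under $H_0$ and to $\N(\gamma\,R_{t_i}\!\otimes R_{t_j},I_{m^2})$ under $H_1$ with $\gamma=\mu k_0r/n$, so summing over the $k_0^2$ blocks with \sref{Lemma}{Tensor} and accumulating via \sref{Lemma}{TVACC} gives stage-two cost $\epsilon_2=O(n^{-1})$. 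Re-laying the vectors as $m\times m$ matrices, each row $R_t$ has $m/r$ ``heavy'' coordinates equal to $\tfrac{r-1}{\sqrt{mr}}$ (the out-neighborhood of $t$ in the sampled regular graph) and $m-m/r$ ``light'' coordinates equal to $-\tfrac1{\sqrt{mr}}$; after the singular-value normalization the $H_1$ Gaussian mean matrix is exactly $\gamma\,ww^{\top}$ with $w$ equal to $\tfrac{r-1}{r}$ on $S_1:=\bigcup_i\pi_i^{-1}(A_{t_i})$ (of size $k_0\cdot\tfrac mr=k$) and $-\tfrac1r$ off it.

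Finally I would threshold at $0$: $\N(\nu,1)\mapsto\Bern(\Phi(\nu))$ exactly, so $H_0$ becomes $G(n,1/2)$ and the three Gaussian means $\gamma(r-1)^2/r^2,\,-\gamma(r-1)/r^2,\,\gamma/r^2$ become exactly $P_{11},P_{12},P_{22}$; then apply a uniform random permutation of $[n]$, which turns the restricted community $S_1$ into a uniform $k$-subset, so the $H_1$ output is the block model $\isbm(n,k,P_{11},P_{12},P_{22})$. Composing the three stages and invoking data processing / accumulation (\sref{Lemma}{TVACC}) yields $\dTV(\phi(\cdot),\cdot)\le\epsilon_1+\epsilon_2=o(1)$ on both hypotheses, which is the claim.

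The main obstacle — beyond the (routine, by \sref{Lemma}{RegularMatrix}) spectral control of the Kronecker design matrix — is verifying that the thresholded rank-one Gaussian is genuinely an imbalanced SBM. Because $\Phi$ is only approximately linear, the per-node degree constraint $nP_0=kP_{11}+(n-k)P_{12}=kP_{12}+(n-k)P_{22}$ built into the $\isbm$ definition holds only up to order $\gamma^3$; one checks that the constant and linear parts of the $S_1$-versus-$S_2$ expected-degree difference cancel (using $\sum_u w_u=0$ and $a-b=1$), leaving a residual of order $\gamma^3k$. This residual sits below the threshold of the relevant degree-variance test, and if an exactly degree-regular target is desired one appends a short resampling step nudging $P_{22}$ by $\Theta(\gamma^3/r)$, costing only $o(1)$ extra total variation by \sref{Lemma}{TVBIN}, exactly as in the $P_0$-calibration step in the proof of \sref{Theorem}{PDSbound}. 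The remaining bookkeeping — that the bound on $\mu$ matches the hypothesis of \sref{Lemma}{lem:5c} with rejection parameter $N$, the handling of the diagonal blocks already absorbed in \sref{Lemma}{TOBIPARTITE}, and that $n$ is a multiple of $k_0r$ in $[(1+p/Q)N,\,2(1+p/Q)N]$ — is routine.
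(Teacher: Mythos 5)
Your proposal is correct and follows essentially the same three-stage route the paper takes: $\textsc{To-}k\textsc{-Partite-Submatrix}$ (\sref{Lemma}{TOBIPARTITE}), then \textsc{Bern-Rotations} blockwise with design matrix $\tfrac{1}{C^2}(R\otimes R)$ where $R$ is the centered regular matrix of \sref{Lemma}{RegularMatrix}, then thresholding at $0$ and permuting. Two small things are worth noting. First, your framing of \emph{why} the $\isbm$ design matrix is simpler than the $\pds$ one — the target bias $\gamma ww^{\top}$ with $w=v-\tfrac1r\ones$ is already mean-zero, so the recentering correction $-\tfrac{1}{mr}J\otimes J$ of \sref{Lemma}{RegularBig} is unnecessary — is a clean conceptual observation that the paper states only operationally ("simply the Kronecker product of two matrices"); it is a genuine improvement in exposition. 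Second, on the degree-regularity constraint: the paper does not re-derive the $O(\gamma^3)$ calibration for $\isbm$ but instead cites Corollary 14.5 and Theorem 3.2 of \cite{BB20} for the post-thresholding density alignment (its stated novelty is only removing the number-theoretic condition (T) via the randomized $R$), whereas you carry out the Taylor-expansion-of-$\Phi$ argument explicitly by analogy with the $\pds$ case. Both are valid; yours is more self-contained, the paper's is shorter by leaning on prior work. The spectral bound $\sigma(R\otimes R)\le C_\alpha^2$, the accounting of $\epsilon_1,\epsilon_2$ via \sref{Lemma}{TVACC}, and the identification of the three thresholded means with $P_{11},P_{12},P_{22}$ all match the paper.
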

 \begin{figure}[h!]
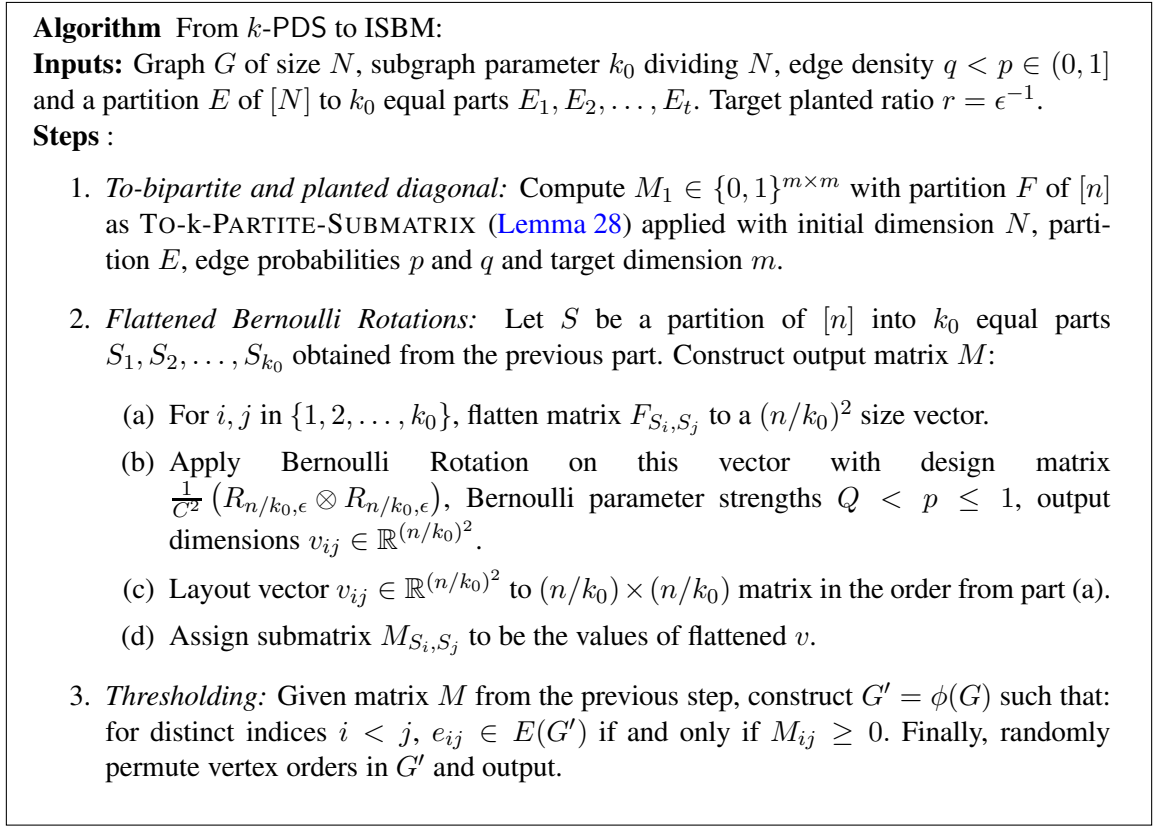

     \begin{mdframed}[innerrightmargin=15pt]
 \textbf{Algorithm } From $k$-$\pd$ to $\isbm$:
 
\spac        \textbf{Inputs:} Graph $G$ of size $N$, subgraph parameter $k_0$ dividing $N$, edge density $q < p\in (0, 1]$ and a partition $E$ of $[N]$ to $k_0$ equal parts $E_1, E_2,\dots, E_t$. Target planted ratio $r=\epsilon^{-1}$.
         
\spac        \textbf{Steps }:
        \vspace{-3pt}
        \begin{enumerate}
             \item \textit{To-bipartite and planted diagonal:} Compute $M_1 \in \{0, 1\}^{m\times m}$ with partition $F$ of $[n]$ as
             \textsc{To-}k\textsc{-Partite-Submatrix} (\sref{Lemma}{TOBIPARTITE})
            applied with initial dimension $N$, partition $E$, edge probabilities $p$ and $q$ and target dimension $m$.
            \item \textit{Flattened Bernoulli Rotations:} Let $S$ be a partition of $[n]$ into $k_0$ equal parts $S_1, S_2,\dots, S_{k_0}$ obtained from the previous part. Construct output matrix $M$:
            \begin{enumerate}
                \item For $i, j$ in $\{1,2,\dots,k_0\}$, flatten matrix $F_{S_i, S_j}$ to a $(n/k_0)^2$ size vector.
                \item Apply Bernoulli Rotation on this vector with design matrix $\frac{1}{C^2}\left( R_{n/k_0,  \epsilon}\otimes R_{n/k_0, \epsilon}\right)$, Bernoulli parameter strengths $Q<p\leq 1$, output dimensions $v_{ij}\in \mathbb{R}^{(n/k_0)^2}$.
                \item Layout vector $v_{ij}\in \mathbb{R}^{(n/k_0)^2}$ to $(n/k_0)\times (n/k_0)$ matrix in the order from part (a).
                \item Assign submatrix $M_{S_i, S_j}$ to be the values of flattened $v$.
            \end{enumerate}
            \item \textit{Thresholding:} Given matrix $M$ from the previous step, construct $G'=\phi(G)$ such that: for distinct indices $i<j$, $e_{ij}\in E(G')$ if and only if $M_{ij}\geq 0.$ Finally, randomly permute vertex orders in $G'$ and output.
        \end{enumerate}
        \vspace{.5mm}
        \end{mdframed}
     \caption{\footnotesize Reduction from $k$-PDS to $\isbm$.}
 \end{figure}
The key distinction here from the reduction in $\pds$ lies almost solely in the design matrix, which is simply the Kronecker (tensor) product of two matrices given by \sref{Lemma}{RegularMatrix}. Therefore:
\begin{lemma}[Bernoulli Rotation for $\isbm_D$]
Consider the second step $\mathcal{A}_2$ applied on the output of \sref{Lemma}{TOBIPARTITE} and assuming notations through \sref{Lemma}{BernRotation}, we have:
\begin{align*}
    \dTV(\mathcal{A}_2(\Bern(Q)^{\otimes n\times n}, \N(0,1)^{\otimes n\times n})&=o(n^{-1})\\
    \dTV(\mathcal{A}_2(\mathcal{M}_{[n]\times [n]}(\mathcal{U}_n(S), p, Q)),\; \mathcal{L}( \frac{\gamma}{r^2}\cdot (rv-{1})^T(rv-{1})+\mathcal{N}(0, 1)^{\otimes n\times n}))&=o(n^{-1})
\end{align*}
where $v\sim \textit{Unif}_n(k)$.
\end{lemma}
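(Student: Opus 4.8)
\textit{Proof plan.}
The plan is to run the proof of \sref{Lemma}{BernRotation} essentially verbatim, changing only the design matrix: here it is $A:=\tfrac{1}{C^{2}}\bigl(R\otimes R\bigr)$ with $R=R_{n/k_0,\,1/r}$ as in \sref{Lemma}{RegularMatrix} (constant $C$ from that lemma), in place of the recentred Kronecker square $K_{n/k_0}^{1/r}$, and then to read off the new mean. Write $m=n/k_0$. First I would note that $\mathcal{A}_2$ is applied to the \emph{exact} laws $\Bern(Q)^{\otimes n\times n}$ (under $H_0$) and $\mathcal{M}_{[n]\times[n]}(\mathcal{U}_n(S),p,Q)$ (under $H_1$), so no total-variation debt from \sref{Lemma}{TOBIPARTITE} is charged here. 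Reading the $n\times n$ array through the partition $S=\{S_1,\dots,S_{k_0}\}$, both inputs factor over the $k_0^{2}$ blocks $F_{S_i,S_j}$: under $H_0$ each block is $\Bern(Q)^{\otimes m^{2}}$, and under $H_1$ — because $\mathcal{U}_n(S)$ places exactly one planted vertex in each part — block $(i,j)$ is a single-bit instance $\textup{PB}\bigl(m^{2},(a_i,b_j),p,Q\bigr)$ with $a_i=M_i(T\cap S_i)$ and $b_j=M_j(T\cap S_j)$ for the bijections $M_i:S_i\to[m]$ and planted set $T$. The feature I would stress is the \emph{product} form $t_{ij}=(a_i,b_j)$, with $a_i$ depending only on the block-row and $b_j$ only on the block-column: this is exactly the dividend of the $k$-$\pc$ leakage hypothesis, and it is what later makes the per-block means fuse into a single rank-one array.

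Next I would invoke \sref{Lemma}{RegularMatrix}, amplified by independent resampling (as in the proof of \sref{Lemma}{RegularBig}), to select $R$ with $\|R\|_{op}\le C$, an event that fails with probability $o(n^{-1})$; on it, multiplicativity of the operator norm under Kronecker products gives $\|A\|_{op}=\|R\|_{op}^{2}/C^{2}\le 1$, so $\lambda:=1$ is a legal singular-value bound. Applying \sref{Lemma}{lem:bern-rotations} to each flattened block with this $A$, bound $\lambda=1$, biases $Q<p\le 1$, output dimension $m^{2}$, rejection parameter $R_{rk}$ and the mean parameter $\mu$ of the reduction, I get per block $\N(0,I_{m^{2}})$ under $H_0$ and $\N\!\bigl(\mu A_{(a_i,b_j)},I_{m^{2}}\bigr)$ under $H_1$, each within $O(m^{2}R_{rk}^{-3})$ in $\dTV$. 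By the Kronecker layout, the row $A_{(a,b)}$ unflattened to an $m\times m$ matrix is the outer product $\tfrac{1}{C^{2}}R_{a}R_{b}^{\top}$ of two rows of $R$; writing a row as $R_a=\tfrac{1}{\sqrt{mr}}\bigl(r\,\one_{A_a}-\one\bigr)$ with $A_a\subset[m]$ the out-neighbourhood of $a$ (of size $m/r$) and $\one$ the all-ones vector, the mean in block $(i,j)$ is $\tfrac{\mu}{C^{2}mr}\bigl(r\,\one_{A_{a_i}}-\one\bigr)\bigl(r\,\one_{A_{b_j}}-\one\bigr)^{\top}$.

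Then I would stitch the blocks. Relabelling block $(i,j)$ through $M_i^{-1}\times M_j^{-1}$, the elevated coordinates over all $i,j$ assemble into $v\times v$ with $v:=\bigcup_i M_i^{-1}(A_{a_i})\subseteq[n]$ of size $k_0\cdot(m/r)=n/r=k$ (identified, as usual, with its $\{0,1\}^{n}$ indicator), and — crucially, because $t_{ij}$ was product-structured — the entry of the global mean array at $(x,y)$ equals $\tfrac{\mu}{C^{2}mr}(r v_x-1)(r v_y-1)$, i.e.\ the array $\tfrac{\gamma}{r^{2}}(rv-\one)(rv-\one)^{\top}$ of the statement once the absolute constant $C^{2}$ is folded into $\mu$ (so that $\gamma=\mu k_0 r/n$); under $H_0$ the same computation gives $\N(0,1)^{\otimes n\times n}$ exactly. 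For the law of $v$: conditionally on $T$ and $R$ it is the explicit set above, and since the local indices $a_i$ are i.i.d.\ uniform on $[m]$ and the out-neighbourhood of a uniform vertex of a uniform random regular graph is a uniform $(m/r)$-subset, $v$ is uniform over size-$k$ subsets balanced across the $S_i$, hence $\textit{Unif}_n(k)$ after the vertex permutation of the reduction. Finally, since conditionally on $(R,T)$ the $k_0^{2}$ block outputs are independent, \sref{Lemma}{Tensor} adds the per-block errors to $O(k_0^{2}m^{2}R_{rk}^{-3})=O(n^{2}R_{rk}^{-3})$; together with the $o(n^{-1})$ failure probability of $\{\|R\|_{op}\le C\}$ this is $o(n^{-1})$ once $R_{rk}$ is a large enough polynomial in $n$ (legal since \textsc{Bern-Rotations} runs in $\textup{poly}(m,R_{rk})$ time), the remaining flatten/layout/relabel steps being deterministic bijections.

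The part I expect to be the real work is the stitching: checking that the $k_0^{2}$ per-block rank-one means fuse into one coherent rank-one array $\tfrac{\gamma}{r^{2}}(rv-\one)(rv-\one)^{\top}$ with $v$ genuinely $\textit{Unif}_n(k)$. This rests on the product structure $t_{ij}=(a_i,b_j)$ of the planted bits (the payoff of the $k$-$\pc$ leakage model) and on the fact that the planted out-neighbourhoods in the random regular graph reassemble, after the permutation, into a uniform size-$k$ subset of $[n]$. By contrast the spectral input $\|A\|_{op}\le 1$ is immediate from \sref{Lemma}{RegularMatrix} and Kronecker multiplicativity, and the rejection-kernel and whitening-noise steps are absorbed wholesale into \sref{Lemma}{lem:bern-rotations}, so neither should pose difficulty.
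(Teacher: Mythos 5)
Your proposal is correct and follows essentially the same route as the paper's own proof: compare with \sref{Lemma}{BernRotation}, note that the design matrix is now $\tfrac{1}{C^2}(R\otimes R)$ so that each planted block $(i,j)$ maps to the outer product of rows $R_{a_i}$ and $R_{b_j}$, fuse the per-block rank-one means into a single rank-one array via the product structure $t_{ij}=(a_i,b_j)$ inherited from $k$-$\pc$, and accumulate the per-block rejection-kernel errors by tensorization. You supply several details the paper elides (the explicit form $R_a=\tfrac{1}{\sqrt{mr}}(r\,\one_{A_a}-\one)$, the Kronecker-multiplicativity bound $\|A\|_{op}\le 1$, the absorption of $C^2$ into $\mu$, the sharper $O(n^2 R_{rk}^{-3})$ per-reduction bound, and the observation that the conditional law of $v$ is a balanced $k$-subset that only becomes $\textit{Unif}_n(k)$ after the final permutation), but these are refinements of, not departures from, the paper's argument.
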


\begin{proof}
Similar to \sref{Lemma}{BernRotation}, the only different part is the output to Bernoulli Rotation in $H_1$ after performing distribution shifts to $\N(0, 1)$ and $\N(\mu, 1)$.

\spac We analyze the output for the specific design matrix. For the sub-block $S_i\times S_j$, it gets mapped to exactly the flattened product of the $t_i$th row of $\frac{1}{C}R_{n, r^{-1}}$ (transposed) times the $t_j$th row of $\frac{1}{C}R_{n, r^{-1}}$. Denote the set of positive terms in the $t$th row to be $P_t$, then the output distribution (conditioning on $T$, the source planted set) is exactly:
$$\mathcal{L}(\frac{\gamma}{r^2}\cdot (rv-{1})^T(rv-{1})+\mathcal{N}(0, 1)^{\otimes n\times n}))$$for $v=\prod_{i=1}^{k_0} P_{t_i}$ (this can also be viewed in the light \emph{Tensor Bernoulli Rotation}, see Corollary 8.2 in \cite{BB20}). Afterwards we can simply permute the nodes (note that $v$ has size $k=n/r$) and the Bernoulli rotation target follows.

\spac To finish off, we get the exact same bound on total variation at most $o(n^4R_{rk}^{-3})$, at which point the total variation bound from applying this algorithm step by $o(n^{-1})$.
\end{proof}
After the Bernoulli rotation, we proceed in a similar fashion with \sref{Theorem}{PDSreduction}. We defer a formal full proof in thresholding Gaussians and aligning the precise density to Corollary 14.5 and Theorem 3.2 in \cite{BB20}, here we only remove the condition (\textsc{T}) they imposed during Bernoulli Rotation to imply the lower bound results in a general regime. This gives us the desired lower bound.
\subsection{Proof of \sref{Theorem}{REFUTEBound}}
\begin{proof}[Refutation hardness]
Consider applying \sref{Lemma}{refutetodetect}, we know that as long as we can find a satisfying ``quiet" adversarial distribution $H_1$, such that\begin{itemize}
    \item $H_1$ is computationally indistinguishable from $H_0$.
    \item $H_0, H_1$ satisfies the refutation valuation function criteria.
\end{itemize}
then we can claim that refutation is hard for $H_0$. For the case of the Erd\H{o}s-Renyi graph null hypothesis $H_0: G(n, P_0)$ and $\val=\DkS$, we may simply consider the alternate hypothesis as $H_1: \isbm(n, k, P_{11}, P_{12}, P_{22})$ with specific pairs of parameters.

\spac
In fact, when $k(P_{11}-P_0)\in\tilde\omega(1)$ we know that the densest subgraph in $H_1$ has density at least $\frac{2P_{11}+P_0}{3}$ with probability $1-o_n(1)$ from the Markov inequality. And by \sref{Theorem}{Thm: StatDKS} we know that as long as $k(P_{11}-P_0)^2\in\tilde\omega(1)$ the (statistical) densest $k$-subgraph in $H_0$ is smaller than $(P_0+P_{11})/2$ with high probability. These two constraints of parameter growth will be satisfied because the (optimal) output regime for \sref{Theorem}{ISBMBound} actually reads $(P_{11}-P_0)^2\in\tilde \Theta(n/k^2)$.

\spac Therefore, from \sref{Theorem}{ISBMBound} we know that as long as $H_0$ is indistinguishable with $H_1$, one cannot refute in polynomial time the densest $k$-subgraph in $G(n, P_0)$ to have value larger than $\frac{P_0+2P_{11}}{3}:=q$. Plugging in the boundary for $\isbm_D$ we know that refutation (of $\pd$) is computationally impossible under the regime $$\frac{k^2D_{KL}(p\|q)}{n}\in\tilde o(1)$$ which contrasts the detection threshold $\frac{k^4D_{KL}(p\|q)}{n^2}\in O(1)$ above which one can perform the optimal sum-test. This fact, combined with semi-definite programming, completely resolves the refutation problem of $\DkS$ in Erd\H{o}s-Renyi graphs.
\end{proof}

\subsection{Computational upper bound for refutation}
To prove an upper bound for refutation, we first need to introduce the semi-definite programming relaxation, which is a common method to computational approach problems such as densest-$k$-subgraph, considered in many works such as \cite{HWX16, CX14}.

Consider the following relaxation of the densest-subgraph:
\begin{align}
\widehat{Z}_{SDP} = \arg\max_{Z}  & \langle E, Z \rangle     \nonumber     \\
\text{s.t.	} & Z \succeq 0,\quad Z\geq 0   \label{eq:PDSCVX_SL}  \\
& Z_{ii} \leq 1, \quad \forall i \in [n]\nonumber \\
& \langle I, Z \rangle  = k  \nonumber  \\
 & \langle J, Z \rangle = k^2.  \nonumber
\end{align}
It is not hard to see that:
\begin{itemize}
    \item A feasible solution of a true subgraph is also feasible for (\sref{}{eq:PDSCVX_SL}), thus the latter will always return objective at least the true density.
    \item (\sref{}{eq:PDSCVX_SL}) is a semi-definite programming problem, and can be efficiently solved.
\end{itemize}
With the sufficiency results given in \cite{HWX16} (specifically, combine their results in Lemma 14, Lemma 15, and Theorem 5), we can show that under the separation conditions of $\frac{k^2}{n}\frac{(p-q)^2}{q(1-q)}\to\infty$ and $k\frac{(p-q)^2}{p(1-p)}\to\infty$, the above formulation of convexified programming for planted dense subgraph will have the optimal solution converging to the true planted instance of our graph $P(\hat Z_{SDP}=Z)\to 1$ assuming the null density satisfies $0.9>q\in\Omega(\frac{\log n}{n})$. Here we use their results for the objective function instead (that is, the objective $\langle E, Z \rangle\leq k^2p+ck\sqrt{p(1-p)}$ with probability $\to 1$ as constant $c\in\Omega_{n, k}(1)$ by Markov Inequality).

\begin{theorem}
Consider the semi-definite programming relaxation for $G\sim G(n, q)$. Then when $\frac{k^2}{n}\frac{(p-q)^2}{q(1-q)}\to\infty$ and $k\frac{(p-q)^2}{p(1-p)}\to\infty$, the probability that the objective function is at least $k^2 \frac{q+2p}{3}$ goes to zero as $n, k\to\infty$. Moreover, in $H_1=\pd(n, k, p, q)$, the objective will be at least $\frac{q+4p}{5}$ with probability $\to 1$. This means that (\sref{}{eq:PDSCVX_SL}) will successfully refute the densest $k$-subgraph valuation problem in $G(n, q)$ vs $\pd(n, k, p, q)$.
\end{theorem}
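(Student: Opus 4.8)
\textit{Proof plan.} The plan is to analyze the natural refutation routine: solve the semidefinite program \eqref{eq:PDSCVX_SL}, let $\mathrm{OPT}(E):=\langle E,\widehat{Z}_{SDP}\rangle$ be its optimal value, and output $0$ (``refuted'') exactly when $\mathrm{OPT}(E)<k^2\cdot\tfrac{q+2p}{3}$. Two structural facts drive everything. First, for every graph with edge matrix $E$ the rank-one matrix $\mathbf 1_{S^\star}\mathbf 1_{S^\star}^{\top}$, with $S^\star$ a densest $k$-subset, is feasible for \eqref{eq:PDSCVX_SL}, so $\mathrm{OPT}(E)\ge 2e(S^\star)=k(k-1)\,\val(G)$; thus $\mathrm{OPT}$ deterministically upper-bounds the densest-subgraph value, which is exactly what a refutation certificate must do. Second, the feasible set lies inside $\{Z\succeq 0:\operatorname{tr}Z=k\}$, and there $\langle W,Z\rangle=\operatorname{tr}(Z^{1/2}WZ^{1/2})\le\lambda_{\max}(W)\operatorname{tr}(Z)\le k\|W\|_{\mathrm{op}}$ for any symmetric $W$. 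So the statement reduces to one concentration estimate per hypothesis, each governed by exactly one of the two displayed conditions.

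For the null $G\sim G(n,q)$ I would write $E=q(J-I)+W$ with $W:=E-\mathbb E[E]$ centered and zero on the diagonal; then for every feasible $Z$, $\langle E,Z\rangle=q(k^2-k)+\langle W,Z\rangle\le qk^2+k\|W\|_{\mathrm{op}}$, hence $\mathrm{OPT}(E)\le qk^2+k\|W\|_{\mathrm{op}}$. Since $q=\Omega(\log n/n)$ and $q$ is bounded away from $1$, the sharp spectral-norm bound for sparse Erd\H{o}s--R\'enyi adjacency matrices --- essentially the content of the relevant lemmas of \cite{HWX16} --- gives $\|W\|_{\mathrm{op}}=O(\sqrt{nq(1-q)})$ with probability $1-o(1)$. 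The hypothesis $\tfrac{k^2(p-q)^2}{nq(1-q)}\to\infty$ is precisely the statement $\sqrt{nq(1-q)}=o\!\big(k(p-q)\big)$, so with high probability $\mathrm{OPT}(E)\le qk^2+o\!\big(k^2(p-q)\big)<k^2\cdot\tfrac{q+2p}{3}$, using that $\tfrac{q+2p}{3}-q=\tfrac{2}{3}(p-q)$ is a fixed fraction of $p-q$.

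For the planted $G\sim\pd(n,k,p,q)$ with planted set $S$, feasibility of $\mathbf 1_S\mathbf 1_S^{\top}$ gives $\mathrm{OPT}(E)\ge 2e(S)$, with $e(S)\sim\Bin\!\big(\binom{k}{2},p\big)$. A Bernstein bound yields $e(S)\ge\binom{k}{2}\!\big(p-\tfrac{1}{10}(p-q)\big)$ except with probability $\exp\!\big(-\Omega\big(k^2(p-q)^2/(p(1-p))\big)\big)$; since $k\to\infty$, the hypothesis $k\tfrac{(p-q)^2}{p(1-p)}\to\infty$ upgrades to $k^2\tfrac{(p-q)^2}{p(1-p)}\to\infty$ and also forces $k(p-q)\to\infty$, the latter absorbing the $k(k-1)$-versus-$k^2$ discrepancy. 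Hence $\mathrm{OPT}(E)\ge k^2\cdot\tfrac{q+4p}{5}$ with high probability, using $\tfrac{9p+q}{10}>\tfrac{q+4p}{5}\iff p>q$. Finally $\tfrac{q+4p}{5}>\tfrac{q+2p}{3}$ with constant-order margin $\tfrac{2}{15}(p-q)$, so the threshold $k^2\cdot\tfrac{q+2p}{3}$ strictly separates the two regimes; combined with the deterministic inequality $\mathrm{OPT}(E)\ge k(k-1)\,\val(G)$, this shows the algorithm outputs $1$ whenever $\val$ exceeds the separation level and outputs $0$ on $G(n,q)$ with probability $1-o(1)$ --- a successful refutation, which closes the computational side of the refutation diagram to match \sref{Theorem}{REFUTEBound}.

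The hard part will be the null-side spectral estimate: a soft $\varepsilon$-net or union-bound argument only gives $\|W\|_{\mathrm{op}}=O(\sqrt{nq\log n})$, which introduces a spurious $\log n$ and would fail to match the stated threshold. Obtaining the theorem exactly as written requires the sharp $O(\sqrt{nq(1-q)})$ bound valid all the way down to $q=\Theta(\log n/n)$; this is why one should invoke the corresponding lemmas of \cite{HWX16} (which already package exactly this) rather than reprove it by hand. Everything else is routine bookkeeping with Bernstein's inequality and the two explicit constant thresholds.
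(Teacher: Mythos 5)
Your proposal is correct but takes a genuinely different route from the paper's proof on the null side, which is the only delicate part. For $G\sim G(n,q)$, you decompose $E=q(J-I)+W$ and bound $\langle W,Z\rangle\le k\|W\|_{\mathrm{op}}$ on the trace-$k$ feasible set, then invoke a sharp $\|W\|_{\mathrm{op}}=O(\sqrt{nq(1-q)})$ spectral bound for sparse Erd\H{o}s--R\'enyi noise. The paper instead proves a \emph{stochastic domination} lemma: since the SDP objective is monotone under adding edges (shown by an edge-resampling coupling), $F(G(n,q))$ is stochastically dominated by $F(\pd(n,k,\tfrac{p+q}{2},q))$, and the objective for that planted instance is controlled by quoting the recovery guarantees of \cite{HWX16} (their Lemmas 14--15 and Theorem~5), which already give $\frac{1}{k^2}\langle E,\hat Z\rangle\le\tfrac{p+q}{2}+\tfrac{c}{k}\sqrt{p(1-p)}$ with $c\to\infty$. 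So the paper never touches $\|W\|_{\mathrm{op}}$ directly: the spectral bound you correctly flag as "the hard part" is hidden inside the cited HWX16 recovery machinery, and the domination lemma lets the null case inherit it for free. Your version is more self-contained and makes the spectral dependence explicit; the paper's version avoids re-deriving or even re-stating the spectral estimate and the monotonicity lemma is a reusable structural fact. The alternative side is essentially identical in both proofs — you use Bernstein where the paper uses Chebyshev on $\Binom(k^2,p)$, but the condition $k\tfrac{(p-q)^2}{p(1-p)}\to\infty$ (upgraded to the $k^2$ rate via $k\to\infty$) is what both need, and your constant checks $\tfrac{q+4p}{5}>\tfrac{q+2p}{3}$ and the feasibility of $\mathbf 1_S\mathbf 1_S^\top$ are the same observations the paper uses.
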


\begin{proof}
We start with the following lemma from stochastic domination: 
\begin{lemma}
Let $F(P)$ be the distribution of objective (\sref{}{eq:PDSCVX_SL}) under the graph distribution $P$. If edges in $G\sim P$ are sampled independently with probability matrix $E_P$ for two distributions $P, Q$, such that $E_P-E_Q\geq 0$ (entry-wise), then for any $x>0$, $\mathbb{P}(F(P)>x)\geq \mathbb{P}(F(Q)>x)$. In other words, the convex program is monotone with respect to the underlying density.
\end{lemma}
\begin{proof}
Consider the following process:
\begin{enumerate}
    \item On $G\sim Q$, find optimal $\hat Z$ for  (\sref{}{eq:PDSCVX_SL}).
    \item Update $G$ in the following way: for any $e_G=0$, flip $e_G=1$ with probability $\frac{E_p(e)-E_q(e)}{1-E_q(e)}$.
\end{enumerate}
The objective never decreases because we only add edges in the second step, whereas the unconditional distribution of the graph generated from 2 is exactly $P$. Hence we find a coupling between two distributions of graphs such that $F(P|G)$ is bounded below by $F(\{G\})$ for any $G$, and the result follows.
\end{proof}
Moreover, note that the above lemma applies to the mixture problem too. Since (\sref{}{eq:PDSCVX_SL}) is symmetric, the objective will not change if we condition the planted dense subgraph to a specific location, then we can use the above lemma and conclude that $F(\pd(n, k, p, q))$ dominates $F(G(n, q))$ for any density $p>q$. Consider the alternative $\pd(n, k, (p+q)/2, q)$ for (\sref{}{eq:PDSCVX_SL}), which also satisfies the conditions for successful recovery by (\sref{}{eq:PDSCVX_SL}). Note that in this case, we know that the objective:
$$\frac{1}{k^2}\langle E, \hat Z \rangle\leq \frac{p+q}{2}+\frac{c}{k}\sqrt{p(1-p)}$$ with probability $\to 1$ if $c\to\infty$. Consider plugging in $p$ to the RHS we get $c=\frac{k}{6}\cdot\frac{(p-q)}{\sqrt{p(1-p)}}\to\infty$ by the asymptotic conditions. Thus that the above objective is bounded above by $k^2\cdot \frac{q+2p}{3}$ with probability going to 1.

\spac On the other hand, clearly for $X\sim\Binom(k^2, p)$, we have $X\leq k^2\cdot\frac{4p+q}{5}$ with probability at most $$\mathbb{P}(X\leq k^2\cdot\frac{4p+q}{5}| \Binom(k^2, p))\leq (\frac{k(p-q)}{5\sqrt{p(1-p)}})^{-2}\to 0$$by Markov inequality. So the valuation condition for $H_1$ is met.
\end{proof}
\subsection{Statistical bounds for refutation}
We now turn to show that the statistical limit of DkS problem lies upon recovery boundary for $G(n, q)$ (ignoring log factors). This has also been studied under the name quasi-cliques ($k$-subgraphs with edge count at least $\gamma{k\choose 2}$) in random graphs by a line of works such as \cite{VBKJ,AS16,DkSErdos}. However, our regime of interest ($k=\Theta(n^{\alpha}), q\in\Omega(1)$) remains largely unstudied in past literature.
\begin{theorem}\label{Thm: StatDKS}
Consider $d=D_{KL}(\Bern(p)\|\Bern(q))=\Theta(\frac{(p-q)^2}{q(1-q)})$ when the densities $p/q\to \Theta(1)$ and $np>nq>\log n$. Then the densest $k$ subgraph density of $G(n, q)$ will be smaller than $\frac{p+q}{2}$ with probability $\to 1$ if $\frac{kd}{\log n}\to\infty$ and $k\to\infty$. Thus statistical refutation is possible.
\end{theorem}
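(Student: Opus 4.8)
The plan is to run a straightforward first-moment (union) bound over all $k$-subsets, combined with the binomial Chernoff estimate. First I would fix a $k$-element set $S\subseteq[n]$: under $G\sim G(n,q)$ the number of edges induced on $S$ is $\Binom({k\choose 2},q)$, so its induced edge density $\val_S(G)$ satisfies, for $t:=\frac{p+q}{2}\in(q,p)$,
\[
\P\big(\val_S(G)\ge t\big)\;\le\;\exp\!\Big(-{k\choose 2}\,D_{KL}\big(\Bern(t)\,\|\,\Bern(q)\big)\Big)
\]
by the standard Chernoff (relative-entropy) bound for binomial upper tails. Since $q<t<p$ and $p,q$ satisfy the comparability hypotheses (the assumption $p/q=\Theta(1)$ together with $1-q=O(1-p)$, which also places the pair $t,q$ in the regime of \sref{Lemma}{KLAPPROX}, using $1-t\ge 1-p$), \sref{Lemma}{KLAPPROX} gives $D_{KL}(\Bern(t)\|\Bern(q))=\Theta\!\big(\frac{(t-q)^2}{q(1-q)}\big)=\Theta\!\big(\frac{(p-q)^2}{q(1-q)}\big)=\Theta(d)$. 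Hence $\P(\val_S(G)\ge t)\le\exp(-c_1k^2d)$ for an absolute constant $c_1>0$ (absorbing the factor relating ${k\choose 2}$ to $k^2$).

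Next I would union-bound over the at most $n^k$ choices of $S$:
\[
\P\big(\val(G)\ge\tfrac{p+q}{2}\big)\;\le\;n^k\exp(-c_1k^2d)\;=\;\exp\!\big(k\log n-c_1k^2d\big).
\]
Writing the exponent as $-k\log n\,\big(c_1\frac{kd}{\log n}-1\big)$ and using $\frac{kd}{\log n}\to\infty$ together with $k\log n\to\infty$ (which follows from $k\to\infty$), the exponent tends to $-\infty$; thus the densest $k$-subgraph of $G(n,q)$ has density $<\frac{p+q}{2}$ with probability $\to1$, which is exactly the claimed bound.

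For the concluding assertion that statistical refutation is possible, I would observe that under $H_1=\pd(n,k,p,q)$ the planted $k$-set carries $\Binom({k\choose 2},p)$ internal edges, so the same Chernoff estimate (now comparing $t$ to $p$, with $D_{KL}(\Bern(t)\|\Bern(p))=\Theta(d)$ again by \sref{Lemma}{KLAPPROX}) shows its induced density is $\ge\frac{p+q}{2}$ except with probability $\le\exp(-c_2{k\choose 2}d)\to0$, since ${k\choose 2}d=\Theta(k^2d)\ge kd=(kd/\log n)\log n\to\infty$. Consequently the (exponential-time) test that outputs $0$ iff the densest $k$-subgraph has density $<\frac{p+q}{2}$ is a valid refutation procedure: on $G(n,q)$ it outputs $0$ with probability $\to1$ by the first part, while on any graph with $\val(G)\ge\frac{p+q}{2}$ (in particular any instance of genuine density above a separation point $\delta\in(\frac{p+q}{2},p)$) it outputs $1$. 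I do not expect a real obstacle here; the only point demanding care is verifying that the pairs $(t,q)$ and $(t,p)$ lie in the regime where \sref{Lemma}{KLAPPROX} yields $D_{KL}=\Theta\!\big(\frac{(p-q)^2}{q(1-q)}\big)$ — which is precisely why comparability of $p,q$ is assumed (the hypothesis $nq>\log n$ is not used for this bound and is kept only to remain within the paper's standing regime). I would also remark that the resulting threshold $kd=\Theta(\log n)$ coincides, up to the harmless shift of the density cutoff from $p$ to $\frac{p+q}{2}$, with the statistical recovery threshold of \sref{Theorem}{PDSstatrec}, which is the reason the statistical refutation limit sits exactly on the recovery boundary.
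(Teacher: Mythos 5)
Your proof is correct and follows essentially the same route as the paper: a Chernoff/relative-entropy tail bound for the $\Binom\bigl({k\choose 2},q\bigr)$ edge count of each fixed $k$-set, followed by a union bound over the $\le n^k$ choices, with the exponent $k\log n-\Theta(k^2 d)\to-\infty$ under $kd/\log n\to\infty$. The only cosmetic differences are that the paper re-derives the Chernoff bound from Stirling's formula while you invoke the standard form and pass through \sref{Lemma}{KLAPPROX} to compare $D_{KL}\bigl(\frac{p+q}{2}\,\|\,q\bigr)$ with $d$, and that you spell out the ``hence refutation'' corollary which the paper leaves implicit; neither changes the substance.
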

\begin{proof}
Firstly we need a tail bound on the Binomial distribution (for $r:=\lceil{pN}\rceil$):
\begin{align*}
    \mathbb{P}(\Binom(N, q)\geq pN)&\leq N\cdot\mathbb{P}(\Binom(N, q)=r)\\
    &=N{N\choose{r}}q^r(1-q)^{N-r}\\
    &\leq N^2\frac{N!}{r!(N-r)!}e^{N(p\log q+(1-p)\log (1-q))}\\&<2N^2\frac{1}{\sqrt{2\pi p(1-p) N}}e^{N D_{KL}(p\|q)}=e^{-N D_{KL}(p\|q)+O(\log N)}
\end{align*}from Stirling's formula and $N, r\to \infty$.

\spac Therefore, we can go on to look at each block, which has $k^2$ independent Bernoullis and thus satisfies the density tail with probability at most $e^{-k^2d+O(\log k)}$. However, there are at most $n\choose k$ such blocks, so if assign random variables $X=\sum X_i$ to those we have:
$$\mathbb{P}(X>0)\leq \mathbb{E}(X)=\sum \mathbb{E}(X_i)\leq n^ke^{-k^2d+O(\log k)}=e^{-k^2d+k\log n+O(\log k)}$$when $\frac{kd}{\log n}\to\infty$, we know that the above objective goes to zero. Replacing $p$ with $\frac{p+q}{2}$ for the above arguments works the same, and thus we are done. 

\spac As an extension, when $\frac{k d_k}{\log n}\to_k\infty$ with parameters $p_k$, we can show that (via a union bound) the densest $k$ subgraph density does not exceed $\frac{q+p_k}{2}$ for all $k>\log n$ simultaneously because the objective is bounded by $\exp\left(-k(kd-\log n)+O(\log k)\right)<\exp(-k\log n)=n^{-k}$).
\end{proof}

Next, we deal with the lower bound on refutation, which states that in $G(n, q)$ there is a dense subgraph with density $p$ and size $k$ with high probability if $kd\in\tilde o(1)$. The following theorem is sufficient to close the boundary for statistical impossibility. Assuming the same set of parameters, we have the following lower bound:
\begin{theorem}[Lower bound on refutation]
Assuming that $k\in o({n}^{\alpha})\bigcap \tilde{\omega}(\sqrt{n})$ for some fixed constant $\alpha<1$ and $p, q$ are all bounded away from 0 and 1\footnote{Observe that here the KL divergence reduces to $\Theta((p-q)^2)$ and $\log n/k=\Theta(\log n)=\Theta(\log k)$}. Moreover, for any $\alpha>0$, assume that $k(p-q)^2\in  \Theta(\log^{-1.01} n)$. There exist a $k$-subgraph in $G(n, q)$ with density at least $(p+q)/2$ with probability $1-o_n(1)$.
\end{theorem}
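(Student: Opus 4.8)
The plan is to prove the (formally stronger) statement that with probability $1-o_n(1)$ the plain Erd\H{o}s--R\'enyi graph $G\sim G(n,q)$ already contains a $k$-subset of induced edge density at least $q+c\sqrt{\log n/k}$ for some absolute constant $c>0$. Since the hypothesis $k(p-q)^2\in\Theta(\log^{-1.01}n)$ forces $\tfrac{p-q}{2}=\Theta\!\big(\sqrt{\log^{-1.01}n/k}\big)=o\!\big(\sqrt{\log n/k}\big)$, such a set has density exceeding $\tfrac{p+q}{2}$ once $n$ is large; note that only $k(p-q)^2\in o(\log n)$ is actually used, so the precise exponent $1.01$ plays no role. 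I would not try to do this by a second moment on the number of dense $k$-sets: the typical overlap of two $k$-sets is $\Theta(k^2/n)$, which is polynomially large, and the correlation induced by the shared $\Theta((k^2/n)^2)$ edges blows the second moment up for $k\gtrsim n^{2/3}$. Instead I would exhibit a dense set explicitly by a greedy construction, organized to avoid any adaptivity in the analysis.

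\textbf{The construction.} Partition $[n]$ into pools $V_0,V_1,\dots,V_k$ with $|V_j|=m:=\lfloor n/(k+1)\rfloor$ for $j\ge1$ (and $V_0$ the remainder); since $k=o(n^\alpha)$ with $\alpha<1$ we have $m\ge n^{1-\alpha}/2\to\infty$ polynomially. Pick $u_1\in V_1$ arbitrarily, and for $j=2,\dots,k$ reveal the edges between $\{u_1,\dots,u_{j-1}\}$ and $V_j$ and set $u_j:=\arg\max_{v\in V_j}\big|N(v)\cap\{u_1,\dots,u_{j-1}\}\big|$; output $S=\{u_1,\dots,u_k\}$. The point of disjoint pools is that the edges examined at stage $j$ each have an endpoint in $V_j$, hence are disjoint from all previously examined edges (which had an endpoint in some $V_{j'}$, $j'<j$), and are revealed only \emph{after} $u_1,\dots,u_{j-1}$ have been fixed; so, conditionally on $u_1,\dots,u_{j-1}$, the scores $\big|N(v)\cap\{u_1,\dots,u_{j-1}\}\big|$, $v\in V_j$, are genuinely i.i.d.\ $\Binom(j-1,q)$. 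A binomial moderate-deviation estimate then gives, for $j-1\ge\log^4 n$ and a small constant $\varepsilon>0$, that each score exceeds $(j-1)q+\varepsilon\sqrt{(j-1)q(1-q)\log m}$ with probability at least $m^{-\varepsilon^2}$, so the maximum over the $m$ vertices of $V_j$ misses this threshold with probability at most $(1-m^{-\varepsilon^2})^m\le e^{-m^{1-\varepsilon^2}}=o(1/n^{10})$. A union bound over the at most $k\le n$ stages --- cheap precisely because the disjoint-pools device has replaced a union over all $\binom nk$ subsets by a union over $k$ stages --- shows that, with probability $1-o_n(1)$, $\big|N(u_j)\cap\{u_1,\dots,u_{j-1}\}\big|\ge (j-1)q+\varepsilon\sqrt{(j-1)q(1-q)\log m}$ for every $j\in[\log^4 n,\,k]$.

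\textbf{Conclusion.} Summing over $j$ (and lower-bounding the contribution of the first $\log^4 n$ stages by $0$, which costs only $O(\log^8 n)$ edges, negligible since $k\in\tilde\omega(\sqrt n)$) yields, on this event,
\[
e(G[S])\;\ge\; q\binom k2 \;+\; \varepsilon\sqrt{q(1-q)\log m}\sum_{j=\log^4 n}^{k}\sqrt{j-1}\;-\;O(\log^8 n)\;\ge\; q\binom k2 + c\,k^{3/2}\sqrt{\log n},
\]
using $\log m=\Omega(\log n)$, $q(1-q)=\Theta(1)$, and $\sum_{j\le k}\sqrt j=\Omega(k^{3/2})$. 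Dividing by $\binom k2$ gives density at least $q+\Omega(\sqrt{\log n/k})$, which by the first paragraph is at least $\tfrac{p+q}{2}$ for $n$ large, completing the argument.

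\textbf{Main obstacle.} The only delicate point is the binomial moderate-deviation estimate: the Gaussian-type bound $\mathbb{P}[\Binom(N,q)\ge Nq+s]\ge\exp\!\big(-(1+o(1))\tfrac{s^2}{2Nq(1-q)}\big)$ is valid only in the range $s=o(N^{2/3})$, i.e.\ $N=\omega(\log^3 n)$ at the scale $s\asymp\sqrt{N\log n}$ that I need, which is exactly why the first $\log^4 n$ stages are discarded rather than analyzed; one also has to confirm that the disjoint-pools construction really eliminates adaptivity (it does: at stage $j$ the candidates $V_j$ have not participated in any earlier comparison, so there is no ``loser bias''). One might instead hope to prove the theorem by contradiction as a reduction from the statistical recovery-impossibility of \sref{Theorem}{PDSstatrec} --- if $G(n,q)$ had no dense $k$-subgraph w.h.p., the densest-$k$-subgraph would be forced close to the planted community in $\pd(n,k,p,q)$ --- but pushing only the deterministic quietness bound through gives overlap of order $k^{3/4}$, well below the $k^{1-o(1)}$ needed by the partial-to-exact boosting of \sref{Lemma}{partialtofull}, so the direct construction above is the cleaner route.
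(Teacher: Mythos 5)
Your proof is correct, and it takes a genuinely different route from the paper's. The paper argues via a reduction from recovery: it plants a dense subgraph on a set $E$, observes (using the statistical recovery impossibility of Theorem~\ref{PDSstatrec}, boosted through Lemma~\ref{partialtofull}) that the densest $k$-subgraph $\hat E$ of the planted instance must have small overlap $T=\hat E\cap E$, decomposes the edge count in the \emph{unplanted} graph as $E^{G_1}_{\hat E}= E^{G_1'}_{\hat E}-E^{G_1'}_{T}+E^{G_1}_{T}$, and controls the two correction terms via a case split on $|T|$ together with the first-moment densest-subgraph upper bound of Theorem~\ref{Thm: StatDKS}. You avoid the entire recovery chain and instead exhibit the dense set explicitly by a staged greedy exposure. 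The disjoint-pools device is the right one: it is precisely what replaces a hopeless union bound over $\binom{n}{k}$ sets (or a second moment destroyed by overlap correlation, as you note) with a union bound over $k$ exposure stages, each a maximum of $m$ fresh, conditionally independent $\Binom(j-1,q)$ scores. Your route is more self-contained and in fact yields the log-tight statement that $G(n,q)$ has a $k$-subset with excess density $\Omega(\sqrt{\log(n/k)/k})$, matching Theorem~\ref{Thm: StatDKS} up to constants rather than only up to the $\log^{1.01}$ slack of the paper's parameterization. What the paper's route buys is the conceptual link it advertises between refutation and recovery thresholds; as a proof of the stated theorem, yours is both shorter and quantitatively sharper. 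The two points you flag --- the validity range $N=\omega(\log^3 n)$ of the lower Cram\'er tail at scale $s\asymp\sqrt{N\log n}$, and the absence of loser bias at each exposure stage --- are indeed the only delicate steps, and you handle both correctly: the $\log^4 n$ burn-in costs only $O(\log^8 n)$ edges against a surplus of order $k^{3/2}\sqrt{\log n}$, which is polynomially large since $k\in\tilde\omega(\sqrt n)$, and every edge revealed at stage $j$ has an endpoint in the previously untouched pool $V_j$, so the stage-$j$ scores are genuinely fresh given the history.
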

\begin{proof} 
First of all, the conditions assert that $p-q\in\Omega(\frac{\log^2 n}{n})$. In fact, from the statistical boundary on exact recovery given in \cite{HWX16b}, we know that recovery is impossible in this regime, even in the minimal variant (from the reduction given in \sref{Lemma}{partialtofull}).

\spac
Consider the densest $k$ subgraph estimator $\hat E$, which happens to be the MLE estimator on the planted instance (though we do not need this fact), we know that under this regime it correlates with the true planted mean with expected density $<\epsilon$ for any constant $\epsilon$ asymptotically (partial recovery), we try to bound the density in $\hat E$ before planting the dense subgraph. 

\spac 
Formally, assume that $\hat E\cap E=T$ where $E$ is the true planted set. Consider the original $G_1$ the instance from $G(n, q)$ and $G_1'$ be the graph after planting on $E$. The total edges in $G_1(\hat E)$ is at least (since it is the densest subgraph in $G_1'$):
$$E^{G_1}_{\hat E}= E^{G_1'}_{\hat E}-E^{G_1'}_{T}+E^{G_1}_{T}\geq E^{G_1'}_{E}-E^{G_1'}_{T}+E^{G_1}_{T}$$and we bound those terms one by one. To start, note that $|T|>\log n$, else the total edges offset in $T$ is at most ${|T|\choose 2}<(\log n)^2$, and ${|T|\choose 2}/{k\choose 2}=O((\log n)^2/k^2)\subset o(p-q)$. Now we consider the case when $|T|>\log n\to\infty$ and apply the densest $|T|$ subgraph in $(G_1')_E$:
    \begin{enumerate}
    \item $E^{G_1'}_{E}$ is just the edge count of the planted instance that is distributed according to $G(k, p)$. We know that the total number of edges is at least $$E^{G_1'}_{E}\geq \frac{2p+q}{3}{k\choose 2}$$from a simple Markov inequality (as in the previous theorem).
    \item $E^{G_1'}_{T}$ is equivalent to $|T|$-subgraph sampled from $G(n, q)$. From the previous theorem, we know that if $\frac{|T|(p-r_{|T|})^2}{\log k}\in \omega(1)$, then with probability $1-o(1)$ the densest $|T|$ subgraph in planted set has density at most $r_{|T|}$, and $E^{G_1'}_{T}<r_{|T|}{|T|\choose 2}$.
    
    \item Similar as the previous part, we know that when $\frac{|T|(p-s_{|T|})^2}{\log n}\in \omega(1)$ the probability that $G_1$ has such a \emph{sparse} subgraph is at most $1-o(1)$ (note that here we use the reverse side of the tail bound, which is a trivial implication when $p, q$ are bounded away by one) and $E^{G_1}_{T}>s_{|T|}{|T|\choose 2}$.
\end{enumerate}
Combining  the above, we only need to show that:
$$(r_{|T|}-s_{|T|}){|T|\choose 2}\leq \frac{1}{6}(p-q){k\choose 2}.$$

Let $d_{|T|}=r_{|T|}-p>0$ and $f_{|T|}=q-s_{|T|}>0$ then $|T|(d_{T}^2+f_{|T|}^2)\in O(\log n)$, thus the sum bound over all edges $|T|^2(d_{T}+f_{|T|})\in O(\sqrt{\log n}|T|^{3/2})$. Moreover, recall the condition on $p, q$ we have $k^2(p-q)\in\Theta(k^{3/2}\sqrt{\log^{-1.01}n}).$\footnote{Note that here the key is that (somewhat counterintuitively) we want $p, q$ to be far enough so that we can utilize the fact that small error terms cannot dominate the total density of at least $p$ in $\hat E$.}

\spac Now note that $r-s=p-q+(d+f)$, thus what remains to show is the local inequality $$|T|\in o(\frac{k}{\log^{0.7} k}),\;\;\;\;\;\;\text{when } k(p-q)\in\Theta(\log^{-1} n)$$after which apply the fact that ${|T|\choose 2}(d+f)\in o(k^2(p-q))$ we are done.

\spac Finally, note that the information theoretical limit for precise recovery is $k(p-q)^2\in\Theta(\log n)$ (\sref{Theorem}{PDSstatrec}), below which it is impossible to perform (even weak) recovery, so the above bound on $|T|$ follows immediately from \sref{Lemma}{partialtofull} with strength $\alpha=1.004$ (so the expected size of $|T|$ cannot be greater than $\frac{k}{(\log k)^{1.004}}<\frac{k}{(\log k)^{0.7}}$ by minimal recovery).
\end{proof}

As a conclusion to this section, we note that a version of the refutation bound was very recently closed in \cite{cheairi2022densest}, which states that the exact refutation boundary lies in $k(p-q)^2\in\Theta(\log n)$ (below which a dense subgraph exist with high probability). Though our theorem above is a weaker version of their result, the goal is to provide insights into reductions via recovery.

\section{Detection-Recovery gaps in other problems}\label{sec: othergap}
In this section, we finish our discussions on two other problems that observe a detection-recovery gap from reduction to a detection-recovery gap in PDS. In \cite{BBH18}, such relations were considered assuming the PDS recovery conjecture, here we do so at a lower rate of signal from only assuming PC conjecture and \sref{Theorem}{RECOVBOUND}. Denote the $H_1$ hypothesis distributions in \sref{Section}{OtherGaps} as $\textup{BC}(n, k, \mu)$ and $\textup{BSPCA}(m=n, k, d, \theta)$, respectively.
\subsection{Detection-Recovery gap in Biclustering}
This follows from a canonical process of simply performing $\textsc{To-}k\textsc{-Partite-Submatrix}$ and Gaussianizing. This gives us a symmetric planted Gaussian principal submatrix with elevated mean. Lastly, we can permute the columns if needed.

\begin{lemma}[Reduction to Bi-clustering -- Lemma 6.7 in \cite{BBH18}] \label{lem:bcrec}
Suppose that $n, \mu$ and $\rho \ge n^{-1}$ are such that
$$\mu = \frac{\log (1 + 2\rho)}{2 \sqrt{6 \log n + 2\log 2}}>\frac{\rho}{4 \sqrt{6 \log n + 2\log 2}}$$
Then there is a randomized polynomial time computable map $\phi = \textsc{BC-Recovery}$ with $\phi : G_n \to \mathbb{R}^{n \times n}$ such that for any subset $S \subseteq [n]$ with $|S| = k$, it holds that
$$\dTV\left( \phi\left(\pd(n, S, 1/2 + \rho, 1/2)\right), \mathbb{E}_{T\sim \textit{Unif}_n(k)} \mathcal{L}\left( \mu \cdot \one_S \one_T^\top + \N(0, 1)^{\otimes n \times n} \right) \right) = O\left(\frac{1}{\sqrt{\log n}} \right).$$
\end{lemma}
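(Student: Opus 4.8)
The plan is to realize $\phi = \textsc{BC-Recovery}$ as the composition of a bipartization step, a column-randomization step, and an entrywise Gaussianization step, exactly in the spirit of the $\pds$ and $\isbm$ reductions of \sref{Theorem}{PDSreduction} and \sref{Theorem}{PDSB}. First I would apply $\textsc{To-}k\textsc{-Partite-Submatrix}$ (\sref{Lemma}{TOBIPARTITE}) with the trivial one-part partition: this \textsc{Graph-Clone}s the symmetric input $\pd(n, S, 1/2+\rho, 1/2)$ into two copies that are independent conditionally on $S$, stacks one below and the transpose of the other above the diagonal to break symmetry, and plants a negligible $\tilde O(\sqrt{k})$-entry set on the diagonal that is hidden in total variation by a $(1+o(1))$ dimension blow-up. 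By \sref{Lemma}{TOBIPARTITE} the output is within $o(1)$ total variation of $\mathcal{M}_{[n]\times[n]}(S\times S, \Bern(Q), \Bern(1/2))$, where $Q = 1-\sqrt{\frac12(\frac12-\rho)}$ is the cloned bias, so that $Q-\frac12 = \Theta(\rho)$. (Here I identify the blown-up vertex set with $[n]$ as in the statement's normalization.)

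Next, apply a fresh uniformly random permutation $\sigma$ to the columns of this matrix. Because $\pd$ plants a \emph{principal} submatrix, this moves the planted block from $S\times S$ to $S\times\sigma^{-1}(S)$; writing $T := \sigma^{-1}(S)$, conditionally on the fixed set $S$ the set $T$ is uniform over size-$k$ subsets and independent of $S$, which supplies precisely the $\mathbb{E}_{T\sim\textit{Unif}_n(k)}$ averaging in the target. This step is exact (zero error) and, as a bonus, automatically repairs the diagonal: the $(i,i)$ entry of the permuted matrix is the off-diagonal entry in position $(i,\sigma(i))$, which is elevated exactly when $i\in S$ and $\sigma(i)\in S$, i.e.\ exactly when $i\in S\cap T$, matching $\mu\,\one_S\one_T^\top$. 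Then Gaussianize entrywise with the rejection kernel $\textsc{rk}(\cdot)$ of \sref{Lemma}{lem:5c}, taken with biases $\frac12<Q$, rejection parameter $R$ an appropriate polynomial in $n$, and mean parameter $\mu$: this sends $\Bern(\frac12)\mapsto\N(0,1)$ and $\Bern(Q)\mapsto\N(\mu,1)$ up to $O(R^{-3})$ total variation per entry, valid as long as $\mu\le 1\wedge\frac{\delta}{2\sqrt{6\log R + 2\log\!\big((Q-1/2)^{-1}\big)}}$ with $\delta = \log(1+2\rho)$ (the $\log(p/q)$ branch being the minimizer since $\rho>0$). Finally, accumulate errors: \sref{Lemma}{Tensor} bounds the total Gaussianization error by $O(n^2 R^{-3}) = o(1)$, \sref{Lemma}{TOBIPARTITE} contributes $o(1)$, and the permutation is exact, so \sref{Lemma}{TVACC} yields the claimed $O(1/\sqrt{\log n})$ bound, with every step computable in $\mathrm{poly}(n)$ time. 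The stated value $\mu = \frac{\log(1+2\rho)}{2\sqrt{6\log n + 2\log 2}}$ is exactly calibrated so that, using $\rho\ge n^{-1}$, this choice is admissible for $\textsc{rk}$, and the auxiliary inequality $\mu > \frac{\rho}{4\sqrt{6\log n + 2\log 2}}$ — equivalent to $\log(1+2\rho) > \rho/2$, which holds for all $\rho\in(0,\frac12]$ by concavity of $\log(1+\cdot)$ — certifies that the surviving signal $\mu$ is within a constant factor of $\rho/\sqrt{\log n}$, the Kesten--Stigum-type scale of the biclustering target.

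The main obstacle is the first step: turning a \emph{symmetric} $\{1/2,\,1/2+\rho\}$-Bernoulli matrix into an \emph{asymmetric} Gaussian-type matrix whose noise entries are genuinely independent, while simultaneously (i) keeping the planted block intact up to the declared randomization of $T$ and (ii) losing at most a constant factor of signal strength. This is exactly what \textsc{Graph-Clone} buys (conditional independence of the two triangles, at the price that $p-q$ is traded for $Q-\frac12 = \Theta(\rho)$) together with the diagonal-hiding blow-up inside \sref{Lemma}{TOBIPARTITE}; verifying that the parameters it outputs still satisfy the $\textsc{rk}$ admissibility condition for the stated $\mu$ uniformly over the whole range $\rho\ge n^{-1}$ is the one computation that requires care. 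Everything downstream — the column permutation, the entrywise Gaussianization, and the triangle-inequality accumulation of the total-variation errors — is routine given \sref{Lemma}{lem:5c}, \sref{Lemma}{Tensor}, \sref{Lemma}{TVACC} and \sref{Lemma}{TOBIPARTITE}.
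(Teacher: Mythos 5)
Your skeleton — \textsc{Graph-Clone} to break the symmetry of $\pd$, a fresh column permutation $\sigma$ to decouple the target's column support $T=\sigma^{-1}(S)$ from $S$, entrywise Gaussianization via \sref{Lemma}{lem:5c}, and \sref{Lemma}{TVACC} to accumulate the per-step TV errors — is the right shape and matches the paper's one-line sketch. The observation that the column permutation is an exact (measure-preserving) step and produces a $T$ that is uniform and independent of $S$ is correct and is the reason the $\mathbb{E}_{T\sim\textit{Unif}_n(k)}$ appears in the target.

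The gap is in the first step. \textsc{To-}k\textsc{-Partite-Submatrix} (\sref{Lemma}{TOBIPARTITE}) is built for a $\PCR$/$k$-PDS source: a \emph{known} partition $E$ with exactly one planted vertex per part. Its input distribution is $G(N,\mathcal{U}_N(E),p,q)$, and the diagonal-padding machinery (the sets $T_i$, $X_i$, $Y_i$) is calibrated to that structure. Invoking it with a "trivial one-part partition" makes the source plant a \emph{single} vertex, not a $k$-subset, so the input type is simply wrong; meanwhile the actual input to $\textsc{BC-Recovery}$ is a general $\pd(n,S,1/2+\rho,1/2)$ with an \emph{unknown} $k$-set $S$ not respecting any partition. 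The genuinely hard part of this lemma — populating the $n$ diagonal entries, which ought to be $\Bern(1/2+\rho)$ on the $k$ unknown indices of $S$ and $\Bern(Q)$ elsewhere, without knowing $S$ — is exactly what the one-part trick does not address, and your claim of a "negligible $\tilde O(\sqrt{k})$-entry set on the diagonal" is an artifact of the one-planted-vertex-per-part setting that does not transfer. The permutation does not repair this either: it relocates, rather than corrects, mis-distributed diagonal entries. BBH18's Lemma~6.7 uses a different diagonal-hiding subroutine tailored to the non-partite case, and the paper's sketch conflating it with \textsc{To-}k\textsc{-Partite-Submatrix} is itself a bit loose. A secondary slip: after \textsc{Graph-Clone} the planted bias stays at $\Bern(1/2+\rho)$ while the \emph{background} rises to $\Bern(Q)$ with $Q=1-\sqrt{(1/2-\rho)/2}$; your $\mathcal{M}_{[n]\times[n]}(S\times S,\Bern(Q),\Bern(1/2))$ swaps the two in the paper's convention, which then propagates into the rejection-kernel biases you feed in (they should be $Q<1/2+\rho$, not $1/2<Q$).
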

With this, we can now state the lower bound for recovery and refutation in BC by \sref{Lemma}{recovtodetect}:
\begin{corollary}[Recovery Hardness for Bi-Clustering]
Let $\alpha>0$ and $\beta\in(0,1)$, then there exists such parameters $(N_n, K_n, \mu_n)$ such that: (assuming the $\PCR$ detection hypothesis)
\begin{enumerate}
    \item The parameters are in the regime:
    $$\lim_{n\to\infty}\frac{\log K_n}{\log N_n}\leq \beta,\;\;\;\; \lim_{n\to\infty}\frac{\log \mu_n}{\log N_n}\leq -\alpha.$$
    \item If $\beta<\frac{1}{2}+\frac{2}{3}\alpha$, then there is no (randomized) polynomial-time recovery blackbox $\mathcal{A}_n: \mathbb{R}^{N_n\times N_n}\to {N_n\choose K_n}^2=:(\hat S, \hat T)$ such that $|\hat S\bigcap S|+|\hat T\bigcap T|-2K_n\in o(K_n)$ with probability greater than 0 asymptotically with $\mathcal{A}$ applied over the distribution on the Bi-clustering instance conditioning on $S, T$ and the uniform prior distribution $S\indep T\sim \textit{Unif}_n(k)$.
    \item If $\beta<\frac{1}{2}+\alpha$, then there is no polynomial-time refutation blackbox $\mathcal{A}_n: \mathbb{R}^{N_n\times N_n}\to \{0,1\}$ such that $\mathcal{A}$ returns 0 with asymptotically positive probability applied on $\N(0, 1)^{\otimes n\times n}$ and returns $\mathcal{A}(M)=1$ if there is a $k\times k$ submatrix $S$ in $M$ with mean at least $k^2\mu$. 
\end{enumerate}
\end{corollary}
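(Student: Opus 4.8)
The plan is to derive the corollary by stacking reductions that are already available: for the recovery statement, the $\pd\to$Gaussian-biclustering reduction \sref{Lemma}{lem:bcrec} composed with the $\pd$ recovery hardness \sref{Corollary}{RECOVBOUND}; for the refutation statement, the $\isbm$ detection hardness \sref{Theorem}{ISBMBound} together with \sref{Lemma}{refutetodetect}. Fix the target sequence so that $K_n$ has order $N_n^{\beta}$ and $\mu_n$ has order $N_n^{-\alpha}$ up to polylogarithmic factors (the blow-ups incurred by the reductions move these exponents only by $o(1)$), which gives part~(1). The reason this works, exactly as in \sref{Corollary}{RECOVBOUND} itself, is that a recovery (resp.\ refutation) blackbox only ever sees $H_1$ (resp.\ $H_0$), so we are free to pick the other hypothesis to suit the reduction.

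For part~(2), suppose $\mathcal{A}$ is a polynomial-time weak-recovery blackbox for $\Expect_{S,T}\,\mathcal{L}(\mu\,\one_S\one_T^\top+\mathcal{N}(0,1)^{\otimes n\times n})$ with $S,T$ independent uniform. When $\beta$ is below the statistical recovery threshold of this model (which covers in particular all $\beta\le\alpha$, where $k\mu^2$ has order $N^{\beta-2\alpha}\to 0$ and the densest $k\times k$ submatrix of pure noise already has mean $\gg\mu$), the existence of $\mathcal{A}$ is impossible for free. Otherwise $\alpha<\beta<\frac12+\frac23\alpha$, and the signal is then small enough that \sref{Lemma}{lem:bcrec}'s hypothesis $\rho\ge n^{-1}$ holds; I would instantiate \sref{Corollary}{RECOVBOUND} with $p_0=\frac12$, whose proof exhibits a $\pd(n,k,p,q)$ on which weak recovery of the planted set is $\PCR$-hard and whose $q$ lies within $O(\gamma/r^{2})$ of $\frac12$, and then pad it (adding each absent edge independently with probability $O(\gamma/r^{2})$ --- a forward reduction that leaves the planted set intact) into $\pd(n,k,\frac12+\rho,\frac12)$ with $\rho\in\tilde\Theta(N^{-\alpha})$, which is exactly the input format of \sref{Lemma}{lem:bcrec}. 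Composing $\mathcal{A}$ after \sref{Lemma}{lem:bcrec} recovers the planted set $S$ of this $\pd$ instance with expected overlap $k-o(k)$ (the $O(1/\sqrt{\log n})$ total-variation error costs only an extra $o(k)$), and feeding this oracle into the argument of \sref{Corollary}{RECOVBOUND} --- which only uses a constant-fraction overlap --- contradicts the $\PCR$ conjecture.

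For part~(3), I would take the quiet alternative $\tilde H_1$ to be the Gaussian $\isbm$ itself. Running the reduction behind \sref{Theorem}{ISBMBound} but halting before its final thresholding step maps $(G(N,\frac12),\PCR)$ within total variation $o(1)$ to $(\mathcal{N}(0,1)^{\otimes n\times n},\tilde H_1)$, where $\tilde H_1=\mathcal{N}(0,1)^{\otimes n\times n}+\frac{\gamma}{r^{2}}(rv-\mathbf{1})^{\top}(rv-\mathbf{1})$ with $v=\one_{S_1}$, $|S_1|=k$; its $S_1\times S_1$ block has mean $\gamma(r-1)^2/r^2\ge\mu$ and concentrates, so $\tilde H_1$ contains a $k\times k$ submatrix of mean $\ge k^2\mu$ w.h.p.\ and is a valid alternative for the biclustering refutation problem, provided the densest $k\times k$ submatrix of $\mathcal{N}(0,1)^{\otimes n\times n}$ has mean $<\mu$, i.e.\ provided $\beta>2\alpha$ (up to logs); when $\beta\le 2\alpha$ pure noise itself contains such a submatrix w.h.p.\ and the claimed statement is vacuous. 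Then \sref{Lemma}{refutetodetect} promotes any refutation blackbox to a distinguisher of $(\mathcal{N}^{\otimes n\times n},\tilde H_1)$, which composed with the reduction above distinguishes $\PCR$; since there the ratio is $R=n/k\in\tilde\Theta(N^{\beta'})$ with $\beta'=1-\beta$ and $|P_{11}-P_0|\in\tilde\Theta(N^{-\alpha})$, the hardness window $\beta'>\frac12-\alpha$ of \sref{Theorem}{ISBMBound} reads $\beta<\frac12+\alpha$ in terms of the submatrix size, matching the claim.

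The main obstacle is bookkeeping rather than conceptual: forcing the $\pd$ instance produced by \sref{Corollary}{RECOVBOUND} into the exact input format of \sref{Lemma}{lem:bcrec} (pinning $q=\frac12$, checking $\rho\ge n^{-1}$), recording the Gaussian $\isbm$ as an intermediate output of the \sref{Theorem}{ISBMBound} reduction instead of using its stated Bernoulli form, and reconciling the three different exponent parametrizations of $\pd$, $\isbm$ and biclustering so that the windows $\beta<\frac12+\frac23\alpha$ and $\beta<\frac12+\alpha$ emerge cleanly --- together with isolating the small-signal corners in which the statement is either information-theoretically trivial or vacuous.
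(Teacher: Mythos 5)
Your proposal is correct and, for part (2), follows the same route as the paper: compose the $\pd\to$biclustering map of \sref{Lemma}{lem:bcrec} with the $\pd$ recovery hardness of \sref{Corollary}{RECOVBOUND} via \sref{Lemma}{recovtodetect}. You are more explicit than the paper's one-line remark on two points worth flagging: the $\pd$ instance produced by \sref{Theorem}{PDSreduction} has background density $P_2=\Phi(-\gamma/r^2)\neq\frac{1}{2}$, so the edge-padding step you describe is genuinely needed to meet the hypothesis $q=\frac{1}{2}$ of \sref{Lemma}{lem:bcrec} (and you correctly check it only perturbs the signal by $O(\gamma/r^2)$); and the corner $\beta\le\alpha$ must be handled separately, since \sref{Corollary}{RECOVBOUND} requires $\alpha<\beta$, which your statistical-infeasibility observation does. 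For part (3), the paper's text cites only \sref{Lemma}{lem:bcrec} and \sref{Lemma}{recovtodetect}, which on their own would reach at most $\beta<\frac{1}{2}+\frac{\alpha}{2}$ (the sum-test detection boundary of the uniform-prior biclustering $H_1$); you correctly identify that the quiet alternative must instead be the Gaussian $\isbm$ intermediate arising in the reduction of \sref{Theorem}{ISBMBound}, fed through \sref{Lemma}{refutetodetect}, and that the reparametrization $\beta'=1-\beta$ turns the window $\beta'>\frac{1}{2}-\alpha$ of \sref{Theorem}{ISBMBound} into the claimed $\beta<\frac{1}{2}+\alpha$. This half of your argument is more explicit than, and arguably fills a gap left open by, the paper's terse presentation of the refutation bound.
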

We finally comment that the detection boundary is $\mu\in\tilde\omega(n/k^2)$ from the same reduction and hence the detection problem is computationally easy when $\beta>\frac{1}{2}+\frac{1}{2}\alpha$.
\subsection{Detection-Recovery gap in BSPCA}
\begin{theorem}[Recovery Hardness in BSPCA]
Let $\alpha \in \mathbb{R}$ and $\beta \in (0, 1)$. There exists a sequence $\{ (N_n, K_n, D_n, \theta_n) \}_{n \in \mathbb{N}}$ of parameters such that: (assuming the $\PCR$ detection hypothesis)
\begin{enumerate}
\item The parameters are in the regime
$$\lim_{n \to \infty} \frac{\log \theta_n}{\log N_n} \leq -\alpha, \;\;\;\; \lim_{n \to \infty} \frac{\log K_n}{\log N_n} \leq \beta$$
\item If $\alpha > \beta-\frac{1}{2}>0$, then there is no randomized polynomial-time recovery blackbox $\phi_n : \mathbb{R}^{D_n \times N_n} \to \binom{[N_n]}{k}^2$ such that the probability that $\phi_n$ recovers exactly the pair of latent row and column supports of an instance from $\textup{BSPCA}(N_n, K_n, D_n, \theta_n)$ is greater than 0 asymptotically, where the supports are independently distributed from the uniform prior.
\end{enumerate}
\end{theorem}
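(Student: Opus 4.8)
The plan is to deduce BSPCA recovery hardness from the hardness of $\pds$ detection (\sref{Theorem}{PDSbound}) by composing, in the spirit of the biclustering corollary above, the recovery-to-detection reduction \sref{Lemma}{recovtodetect} with the average-case reduction from (symmetric) biclustering to biased sparse PCA of \cite{BBH18}. For a data matrix $X\in\mathbb{R}^{D\times N}$, let $\val_{(s,t)}(X)$ be the largest eigenvalue of the $K\times K$ submatrix, indexed by a candidate row support $s$, of the empirical covariance $\frac1{|t|}\sum_{j\in t}X_jX_j^\top$ formed from a candidate set of samples $t$, and set $\val(X)=\max_{s,t}\val_{(s,t)}(X)$. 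Under $\textup{BSPCA}(N,K,D,\theta)$ the true spike direction (restricted to the informative samples) certifies $\val\ge 1+\theta-o(\theta)$ with high probability, while under a suitably ``quiet'' Gaussian ensemble $\tilde H_0$ one has $\val\le 1+o(\theta)$ with high probability; given these two levels separate, an \emph{exact}-recovery blackbox $\phi_n$ on $H_1$ returns a pair $(s,t)$ with $\val_{(s,t)}(X)\ge 1+\theta-o(\theta)$, so \sref{Lemma}{recovtodetect} upgrades $\phi_n$ into a polynomial-time test distinguishing $\tilde H_0$ from $H_1$.

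I would then realize such a pair $(\tilde H_0,H_1)$ as the image of a conjecturally hard source instance under a chain of total-variation reductions. Starting from a hard $\pds(n,k,p,q)$ instance ($G(n,p_0)$ versus $\pd(n,k,p,q)$, \sref{Theorem}{PDSbound}), apply a variant of \sref{Lemma}{lem:bcrec}, fed with the mean-corrected null $G(n,p_0)$ rather than $G(n,1/2)$, to land on a biclustering pair whose alternative is $\mu\cdot\one_S\one_T^\top+\N(0,1)^{\otimes n\times n}$ (with $S\indep T$ uniform) and whose null is the mean-shifted Gaussian $\N(\mu_0,1)^{\otimes n\times n}$ — the image of $G(n,p_0)$ under the same rejection-kernel reduction; then apply the \cite{BBH18} biclustering-to-sparse-PCA map to obtain a pair whose alternative is $\textup{BSPCA}(N,K,D,\theta)$ and whose null $\tilde H_0$ is a mildly non-isotropic Gaussian ensemble. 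Crucially the spike produced along the way is proportional to $\one_S$, hence has an all-positive support of size $K$, so $\bigl|\,\|v\|_0^+-K/2\,\bigr|=K/2>\delta K$ and the image is a bona fide \emph{biased} SPCA instance; this is exactly why the construction targets the biased model, and — as in the remark following the theorem — why one gets a genuine detection--recovery gap, since for biased SPCA the diagonal/sum test is detection-optimal. Accumulating the per-step $\dTV$ bounds with \sref{Lemma}{TVACC} and precomposing the detection algorithm of the previous paragraph with this chain solves the hard $\pds$ instance, contradicting the $\PCR$ conjecture through \sref{Theorem}{PDSbound}. The remaining bookkeeping is to track the parameter correspondence of the \cite{BBH18} map (dimensions scaling like $n$, $\theta$ like $\mu^2$ up to poly-log factors, $\mu\asymp\rho$ from \sref{Lemma}{lem:bcrec}) and to check that the hard regime of the source maps onto $\alpha>\beta-\frac12>0$, the last inequality recording that the source clique instance must sit above the $\sqrt n$ scale for $\PCR$ to have content.

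The main obstacle — the step I expect to be technically heaviest — is the same one met for $\pds$ and $\isbm$: certifying that the quiet null $\tilde H_0$ produced by the composed reduction still has largest $K$-sparse restricted eigenvalue below $1+\theta$ with high probability (so that \sref{Lemma}{recovtodetect} genuinely applies), while staying poly-time indistinguishable from $\textup{BSPCA}$. A global mean shift in the biclustering stage becomes a rank-one, low-complexity perturbation of a Wishart-type matrix after the biclustering-to-SPCA map, so one must insert a degree-one moment correction (analogous to the $\mathcal A_4$ post-processing in the proof of \sref{Theorem}{PDSbound}) to kill the resulting bias, and then control the restricted eigenvalue of the corrected ensemble by an $\epsilon$-net union bound over the $\binom{D}{K}$ row supports and the $\binom{N}{K}$ sample subsets. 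Everything else — the architecture of the reduction, the biased structure of the output, and the reduction-to-detection step — is assembled from ingredients already established above.
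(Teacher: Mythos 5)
The paper proves this theorem by a shorter route than the one you sketch: it never constructs a quiet BSPCA null or a BSPCA‐level valuation function. Instead it composes the $H_1$‐only reduction chain (\sref{Lemma}{lem:bcrec} followed by \sref{Lemma}{lem:randrot}) into a map $\mathcal{A}$ satisfying $\dTV(\mathcal{A}(\pd(n,u,\tfrac12+\rho,\tfrac12)),\;\N(0,I+\tfrac{\mu^2 k^2}{\tau n}uu^\top)^{\otimes n})=o(1)$ uniformly over $u$, and then observes that a BSPCA recovery blackbox $\psi$ composed with $\mathcal{A}$ is a PDS recovery blackbox, contradicting \sref{Corollary}{RECOVBOUND} after a parameter translation. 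The entire quiet‐null and recovery‐to‐detection machinery is already absorbed into \sref{Corollary}{RECOVBOUND} at the PDS level; there is no need to redo it downstream. Your plan instead tries to push \emph{both} hypotheses of $\pds$ through the chain and invoke \sref{Lemma}{recovtodetect} on the BSPCA target, which is a more delicate strategy and is not what the paper does.

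Beyond being more work, your route has two concrete gaps that you acknowledge but leave open, and they are not minor. First, the valuation $\val_{(s,t)}$ you propose — the top eigenvalue of the $K\times K$ empirical covariance restricted to a size‐$K$ row set $s$ formed from a size‐$K$ sample subset $t$ — does not separate the hypotheses: a $K$‐dimensional Wishart built from only $K$ samples has top eigenvalue concentrated near a constant with $\Theta(1)$ fluctuations, so $\val$ under any Gaussian null is $\Theta(1)$, never $1+o(\theta)$, while $\theta=o(1)$. Restricting $t$ to the full sample set gives fluctuations of order $\sqrt{K/N}=N^{(\beta-1)/2}$, which dominate $\theta\asymp N^{-\alpha}$ unless $\alpha<(1-\beta)/2$, a strictly smaller regime than the theorem's $\alpha>\beta-\tfrac12>0$. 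Second, the ``degree‐one moment correction'' you invoke to remove the rank‐one $\one\one^\top$ bias inherited from $G(n,p_0)$ is not analogous to $\mathcal{A}_4$: in the Gaussian/covariance setting the natural correction (projecting off $\one$) would also act on $H_1$, turning the all‐positive sparse spike $\one_S/\sqrt K$ into a dense vector $\one_S/\sqrt K-(K/m)\one/\sqrt K$, so the target would no longer be a biased sparse PCA instance. You would need a correction that is not a fixed linear projection, and you have not exhibited one. The cleanest fix is to drop the BSPCA‐level quiet null entirely and argue exactly as the paper does: show the $H_1$‐only map preserves the support and appeal to \sref{Corollary}{RECOVBOUND}.
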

\begin{proof}
The proof follows from the following lemma:
\begin{lemma}[Random Rotation -- Lemma 8.7 in \cite{BBH18}] \label{lem:randrot}
Let $\tau : \mathbb{N} \to \mathbb{N}$ be an arbitrary function with $\tau(n) \to \infty$ as $n \to \infty$. There exists map $\phi : \mathbb{R}^{m \times n} \to \mathbb{R}^{m \times n}$ that sends $\phi(\N(0, 1)^{\otimes m \times n}) \sim \N(0, 1)^{\otimes m \times n}$ and for any unit vectors $u \in \mathbb{R}^m, v \in \mathbb{R}^n$ we have that
$$\dTV\left( \phi\left( \mu \cdot uv^\top + \N(0, 1)^{\otimes m \times n} \right),\, \N\left(0, I_m + \frac{\mu^2}{\tau n} \cdot uu^\top\right)^{\otimes n} \right) \le \frac{2(n + 3)}{\tau n - n - 3}\in o(1)$$
\end{lemma}
We defer the proof to \cite{BBH18} and focus on the reduction forward. Note that the left-hand side can be viewed as the asymmetric biclustering distribution, thus combining \sref{Lemma}{lem:bcrec} and \sref{Lemma}{lem:randrot} with \sref{Lemma}{TVACC} we get a polynomial-time map $\mathcal{A}$ such that:
$$\dTV(\mathcal{A}(\pd(n, u, \frac{1}{2}+\rho, \frac{1}{2})), \N(I_n+\frac{\mu^2}{\tau n}uu^T))=o(1).$$

Now the only thing left is to define precise parameter correspondence to apply \sref{Theorem}{RECOVBOUND}. Consider the following set of parameters (let $\gamma:=\beta-\frac{1-\alpha}{2}$):
$$K_n\in\tilde\Theta(N^{\beta}),\;\, \rho_n\in\tilde\Theta(N^{-\gamma}),\;\, N_n=D_n=N,\;\,\mu_n=\frac{\log (1 + 2\rho_n)}{2 \sqrt{6 \log N + 2\log 2}},\;\,\theta_n=\frac{k_n^2\mu_n^2}{\tau n}$$Observe that because $\rho\to 0$, $\log(1+2\rho)\in \Theta(\rho)$ and thus $\mu_n\in\tilde\Theta(\rho_n)$, one can easily verify that the conditions are equivalent to: $$\lim_{n\to\infty}\frac{\log(K_n^3\rho_n^2)}{\log(N_n)}=1-\alpha+\beta<1.5$$thus we can apply \sref{Theorem}{RECOVBOUND}, which concludes that no polynomial black-box can successfully recover the planted instance $u$ here. 
\end{proof}

\end{document}